\theoremstyle{plain}
\newtheorem{theorem}{Theorem}[section]
\newtheorem{lemma}[theorem]{Lemma}
\newtheorem{corollary}[theorem]{Corollary}
\theoremstyle{definition}
\newtheorem{remark}[theorem]{Remark}
\newtheorem{definition}[theorem]{Definition}
\newtheorem{example}[theorem]{Example}
\newtheorem*{acknowledgments}{Acknowledgments}
\numberwithin{equation}{section}
\newcommand{\bN}{\mathbb{N}}
\newcommand{\bZ}{\mathbb{Z}}
\newcommand{\bP}{\mathbb{P}}
\newcommand{\bG}{\mathbb{G}}
\newcommand{\ep}{\varepsilon}
\title[Two-dimensional quasi-monomial group actions]
{Rationality problem of two-dimensional quasi-monomial group actions}
\author[A. Hoshi]{Akinari Hoshi}
\address{Department of Mathematics, Niigata University, 
Niigata 950-2181, Japan}
\email{hoshi@math.sc.niigata-u.ac.jp}
\author[H. Kitayama]{Hidetaka Kitayama}
\address{Department of Mathematics, 
Wakayama University, Wakayama 640-8510, Japan}
\email{hkitayam@wakayama-u.ac.jp}
\thanks{{\it Key words and phrases.} Rationality problem,
quasi-monomial actions, Severi-Brauer varieties, 
del Pezzo surfaces, algebraic tori.\\
This work was partially supported by JSPS KAKENHI Grant Numbers 
19K03418, 19K03447, 20H00115. 
Parts of the work were finished when the authors were visiting the
National Center for Theoretic Sciences (Taipei), whose
support is gratefully acknowledged.}
\subjclass[2010]{Primary 12F20, 13A50, 14E08.}
\begin{document}
\begin{abstract}
The rationality problem of two-dimensional purely quasi-monomial actions 
was solved completely by Hoshi, Kang and Kitayama \cite{HKK}. 
As a generalization, we solve the rationality problem of two-dimensional 
quasi-monomial actions under the condition that the actions are defined 
within the base field. 
In order to prove the theorem, 
we give a brief review of the Severi-Brauer variety with 
some examples and rationality results. 
We also use a rationality criterion 
for conic bundles of $\bP^1$ over non-closed fields. 
\end{abstract}
\maketitle
\tableofcontents
%
%
\section{Introduction} \label{seInt}
%

Let $G$ be a finite group. 
A $G$-lattice $M$ is a finitely generated $\bZ[G]$-module which is 
$\bZ$-free as an abelian group, i.e.\ 
$M=\bigoplus_{1\le i\le n} \bZ\cdot x_i$ with a $\bZ[G]$-module structure. 
Let $K/k$ be a field extension such that $G$ acts on $K$ with $K^G=k$. 
Consider a short exact sequence of $\bZ[G]$-modules 
$\alpha: 1\to K^{\times}\to M_\alpha \to M\to 0$ 
where $M$ is a $G$-lattice and $K^{\times}$ is regarded as a 
$\bZ[G]$-module through the $G$-action on $K$. 
The $\bZ[G]$-module structure (written multiplicatively) of 
$M_\alpha$ may be described as follows: 
For each $x_j\in M$ ($1\le j\le n$), 
take a pre-image $u_j$ of $x_j$. 
As an abelian group, $M_\alpha$ is the direct product of $K^{\times}$ 
and $\langle u_1, \ldots, u_n \rangle$. 
If $\sigma \in G$ and 
$\sigma\cdot x_j=\sum_{1\le i\le n} a_{ij} x_i \in M$, 
we find that 
$\sigma\cdot u_j=c_j(\sigma) \cdot \prod_{1\le i\le n}u_i^{a_{ij}} 
\in M_\alpha$ for a unique $c_j(\sigma)\in K^{\times}$ determined 
by the group extension $\alpha$. 

Using the same idea, once a group extension $\alpha:1\to
K^{\times}\to M_\alpha \to M\to 0$ is given, we may define 
an action of $G$ on the rational function field 
$K(x_1,\ldots,x_n)$ as follows: 
If $\sigma\cdot x_j=\sum_{1\le i\le n} a_{ij} x_i \in M$, 
then define $\sigma\cdot x_j=c_j(\sigma)\prod_{1\le i\le n} x_i^{a_{ij}} 
\in K(x_1,\ldots,x_n)$ and $\sigma\cdot \alpha =\sigma(\alpha)$ 
for $\alpha\in K$ where $\sigma(\alpha)$ is the image of $\alpha$
under $\sigma$ via the prescribed action of $G$ on $K$. 
This action is well-defined (see Saltman \cite[page 538]{Sa1} for details). 
The field $K(x_1,\ldots,x_n)$ with such a $G$-action will be denoted 
by $K_\alpha(M)$ to emphasize the role of the extension $\alpha$; 
its fixed field is denoted as $K_\alpha(M)^G$. 
When $k=K$, the above group action is called {\it monomial group action} 
in Hajja and Kang \cite{HK1}. 
The elements of $k_\alpha(M)^G$ are called 
{\it twisted multiplicative field invariants} in 
Saltman \cite{Sa2} 
(see also Saltman \cite{Sa1} and Kang \cite[Definition 2.2]{Ka4}).

If the extension $\alpha$ splits, then we may take
$u_1,\ldots,u_n\in M_\alpha$ satisfying that $\sigma\cdot
u_j=\prod_{1\le i\le n} u_i^{a_{ij}}$. 
In this case, we will write $K_\alpha(M)$ and $K_\alpha(M)^G$ 
as $K(M)$ and $K(M)^G$ respectively 
(the subscript $\alpha$ is omitted because the extension $\alpha$ 
plays no important role). 
Again when $k=K$, 
the action of $G$ on $k(M)^G$ is called {\it purely monomial} 
in Hajja and Kang \cite{HK1}. 

The following gives an equivalent definition of the action of $G$ 
on $K_\alpha(M)$ above (see also Hoshi, Kang and Kitayama 
\cite[Section 1]{HKK} for some examples): 
\begin{definition}[{Hoshi, Kang and Kitayama \cite[Definition 1.1]{HKK}}]\label{defqm}
Let $k$ be a field and $K/k$ be a finite extension. 
Let $K(x_1,\ldots, x_n)$ be the rational function field over 
$K$ with $n$ variables $x_1,\ldots,x_n$. 
Let $G$ be a finite subgroup of ${\rm Aut}_k(K(x_1,\ldots, x_n))$. 
The $G$-action on $K(x_1,\ldots, x_n)$ is called {\it quasi-monomial} 
if it satisfies the following three conditions:\\
(i) $\sigma (K)\subset K$ for any $\sigma\in G$;\\
(ii) $K^G=k$ where $K^G$ is the fixed field under the action of $G$;\\
(iii) $\sigma(x_j)=c_j(\sigma)\prod_{i=1}^n x_i^{a_{i,j}}$
for some $c_j(\sigma)\in K\setminus\{0\}$
for any $\sigma\in G$ and $j=1,\ldots, n$,
where $[a_{i,j}]\in GL_n(\bZ)$.

The quasi-monomial action is called {\it purely quasi-monomial} 
if $c_j(\sigma)=1$ for any $\sigma\in G$, any $1\leq j\leq n$
in {\rm (iii)}.
\end{definition}

Let $G$ be a finite group acting on $K(x_1,\ldots,x_n)$
by quasi-monomial $k$-automorphisms. 
For the group homomorphism $\varphi: G \to GL_n(\bZ), \sigma\mapsto [a_{i,j}]_{1\leq i,j\leq n}$
where $[a_{i,j}]_{1\leq i,j\leq n}$ is given as in (iii) 
of Definition \ref{defqm}, 
define $N:= {\rm Ker} \varphi$.
By \cite[Proposition 1.12]{HKK},
we can take $y_1, \ldots, y_n$ such that
$K(x_1,\ldots,x_n)^N=K^N(y_1,\ldots,y_n)$ and
the action of $G/N$ on $K^N(y_1,\ldots,y_n)$ is again a quasi-monomial action.
Hence we may assume that $N=\{1\}$.
We identify $G$ with a finite subgroup of $GL_n(\bZ)$
without loss of generality hereafter.

Let $L$ be a finitely generated extension field of $k$. 
$L$ is called {\it $k$-rational} (or {\it rational over $k$}) 
if $L$ is purely transcendental over $k$, i.e. $L$ is
$k$-isomorphic to the quotient field of some polynomial ring over $k$.
$L$ is called {\it stably $k$-rational} if $L(y_1,\ldots,y_m)$ is 
$k$-rational for some $y_1,\ldots,y_m$ which are algebraically 
independent over $L$.
$L$ is called {\it $k$-unirational} if $L$ is $k$-isomorphic to a
subfield of some $k$-rational field. 
It is obvious that ``$k$-rational" $\Rightarrow$ ``stably $k$-rational" 
$\Rightarrow$ ``$k$-unirational". 
The L\"uroth problem asks, under what situation, the converse is true, 
i.e. ``$k$-unirational" $\Rightarrow$ ``$k$-rational". 
The L\"uroth problem is a famous problem in algebraic geometry. 
For a survey of it and related rationality problems 
(e.g. Noether's problem), see Manin and Tsfasman \cite{MT} 
and Swan \cite{Sw}.

Define $H=\{ \sigma \in G \ |\ \sigma(\alpha)=\alpha
\ {\rm for} \ {\rm any}\ \alpha \in K \}$.
When $H=\{1\}$, the fixed field $K(x_1,\ldots,x_n)^G$ is $K$-rational 
because $K(x_1,\ldots,x_n)^G\cap K=k$ and 
$K(x_1,\ldots,x_n)^G\otimes_k K=K(x_1,\ldots,x_n)$. 
In particular, $K(x_1,\ldots,x_n)^G$ is geometrically rational, i.e. 
$K(x_1,\ldots,x_n)^G\otimes_k \overline{k}=\overline{k}(x_1,\ldots,x_n)$ 
is $\overline{k}$-rational 
where $\overline{k}$ is an algebraic closure of $k$. 
We remark that when $H\neq\{1\}$, 
there exist $K(x_1,\ldots,x_n)^G$ which are 
not geometrically rational when $n\geq 4$, see 
Hoshi, Kang and Kitayama \cite[Theorem 6.2]{HKK}, 
Hoshi, Kang and Yamasaki \cite[Theorem 1.10]{HKY}. 

The aim of this paper is to investigate the $k$-rationality 
of the fixed field $K(x_1,x_2)^G$ under a quasi-monomial action of $G$.
The following theorems are known (see also Hoshi and Kitayama \cite{HK} 
for $3$-dimensional purely quasi-monomial actions):

\begin{theorem}[{Hoshi, Kang and Kitayama \cite[Proposition 1.13]{HKK}}]
\label{thHKK14a}
Let $k$ be a field and $K/k$ be a finite extension. \\
{\rm (1)} Let $G$ be a finite group acting on $K(x)$ by purely quasi-monomial $k$-automorphisms.
Then $K(x)^G$ is $k$-rational.\\
{\rm (2)} Let $G$ be a finite group acting on $K(x)$ by
quasi-monomial $k$-automorphisms. Then $K(x)^G$ is $k$-rational
except for the following case{\rm :} There is a normal subgroup
$N$ of $G$ such that {\rm (i)} $G/N=\langle \sigma \rangle \simeq
C_2$, {\rm (ii)} $K(x)^N=k(\alpha)(y)$ with $\alpha^2=a\in
K\setminus\{0\}$, $\sigma(\alpha)=-\alpha$ {\rm (}if ${\rm char}$ $k \ne
2${\rm )}, and $\alpha^2+\alpha=a\in K$, $\sigma(\alpha)=\alpha+1$
{\rm (}if ${\rm char}$  $k=2${\rm )}, {\rm (iii)} $\sigma\cdot y=b/y$
for some $b\in k\setminus\{0\}$.

For the exceptional case, $K(x)^G=k(\alpha) (y)^{G/N}$ is
$k$-rational if and only if the norm residue $2$-symbol
$(a,b)_{2,k}=0$ $($if ${\rm char}$ $k\ne 2$$)$, and $[a,b)_{2,k}=0$ $($if
${\rm char}$ $k=2$$)$.

Moreover, if $K(x)^G$ is not $k$-rational, then $k$ is an infinite
field, the Brauer group ${\rm Br}(k)$ is non-trivial, and $K(x)^G$
is not $k$-unirational.
\end{theorem}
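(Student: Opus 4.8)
The plan is to carry out the standard reduction and then recast the problem as a rationality question for a conic over $k$. Replacing $G$ by $G/N$ as recalled after Definition~\ref{defqm} (which for $n=1$ keeps the action purely quasi-monomial), we may assume $G$ is a finite subgroup of $GL_1(\bZ)=\{\pm1\}$, so $G=1$ or $G=\langle \sigma \rangle\simeq C_2$. If $G=1$ then $K=K^G=k$ and $K(x)^G=k(x)$ is $k$-rational. If $G=\langle \sigma \rangle$, then $\sigma$ does not fix $x$ (otherwise $\sigma\in{\rm Ker}\,\varphi=1$), so $\sigma(x)=c/x$ with $c\in K^{\times}$; applying $\sigma$ again and using $\sigma^2=1$ forces $\sigma(c)=c$, i.e.\ $c\in K^G=k^{\times}$, with $c=1$ in case~(1). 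If $\sigma$ moreover acts trivially on $K$, then $K=k$, and since $x$ is a root of $Y^2-(x+c/x)Y+c\in k(x+c/x)[Y]$ we get $[k(x):k(x+c/x)]\le2=[k(x):k(x)^{\langle \sigma \rangle}]$, whence $k(x)^{\langle \sigma \rangle}=k(x+c/x)$ is $k$-rational. So the whole content lies in the case $G=\langle \sigma \rangle\simeq C_2$, $\sigma(x)=b/x$ with $b\in k^{\times}$ ($b=1$ in case~(1)), and $\sigma$ acting nontrivially on $K$.

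In this case $[K:k]=2$ with $K/k$ separable (a purely inseparable quadratic extension admits no nontrivial $k$-automorphism), so $K=k(\alpha)$ with $\alpha^2=a\in k^{\times}$, $\sigma(\alpha)=-\alpha$ if ${\rm char}\,k\ne2$, and $\alpha^2+\alpha=a\in k$, $\sigma(\alpha)=\alpha+1$ if ${\rm char}\,k=2$. Writing $K(x)=K\otimes_kk(x)$ and setting $\tau:=(\sigma|_K)\otimes{\rm id}$ and $\rho:={\rm id}\otimes(x\mapsto b/x)$, one has $\langle\tau,\rho\rangle\simeq C_2\times C_2$, $\sigma=\tau\rho$, and $K(x)^{\langle\tau,\rho\rangle}=k(x)^{\langle x\mapsto b/x\rangle}=k(t)$ with $t:=x+b/x$, so $[K(x):k(t)]=4$ and $[K(x)^{\langle \sigma \rangle}:k(t)]=2$. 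Next I would exhibit a second $\sigma$-invariant $u:=\alpha x+\sigma(\alpha)\cdot b/x$, check (with $v:=\sigma(\alpha)x+\alpha b/x$) that $u+v\in k(t)$ and $uv\in k(t)$, so that $u,v$ are the roots of a quadratic over $k(t)$, note $u\notin k(t)$, and conclude $K(x)^{\langle \sigma \rangle}=k(t)(u)$. Eliminating, $K(x)^{\langle \sigma \rangle}$ is the function field over $k$ of the smooth conic
\[
s^2=a(t^2-4b)\quad({\rm char}\,k\ne2),\qquad s^2+st+at^2=b\quad({\rm char}\,k=2).
\]
Moreover $K(x)^{\langle \sigma \rangle}$ and $K$ are linearly disjoint over $k$ ($K/k$ is Galois and $K(x)^{\langle \sigma \rangle}\cap K=k$) with compositum $K(x)$, so $K(x)^{\langle \sigma \rangle}\otimes_kK\cong K(x)$ and hence $K(x)^{\langle \sigma \rangle}\otimes_k\overline{k}\cong\overline{k}(x)$: the conic is a form of $\bP^1$, hence $k$-rational if and only if it has a $k$-rational point.

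The crux is to decide when that conic has a $k$-point. Up to harmless scaling it is the norm conic of $K/k$ twisted by $b$, so a $k$-point exists if and only if $b\in N_{K/k}(K^{\times})$, equivalently if and only if the quaternion algebra $(a,b)_k$ (resp.\ $[a,b)_k$ in characteristic~$2$) splits, i.e.\ $(a,b)_{2,k}=0$ (resp.\ $[a,b)_{2,k}=0$). In case~(1), $b=1$ is a norm --- indeed $(s,t)=(0,2)$, resp.\ $(1,0)$, is a $k$-point --- so the conic is always $k$-rational, which proves~(1) and the non-exceptional part of~(2) and pins down the exceptional case of~(2) as exactly the configuration and criterion in the statement. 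For the ``moreover'' part: if $K(x)^G$ is not $k$-rational then we are in the exceptional case with $(a,b)_{2,k}\ne0$ (resp.\ $[a,b)_{2,k}\ne0$), so the quaternion algebra is a nonzero class in ${\rm Br}(k)$; in particular ${\rm Br}(k)\ne0$, and since the Brauer group of a finite field vanishes (Wedderburn) $k$ is infinite; and as $K(x)^G=k(C)$ for a conic $C/k$ with $C(k)=\emptyset$ and $k$ is infinite, a dominant $k$-rational map $\bP^m\dashrightarrow C$ would produce a $k$-point, so $K(x)^G$ is not $k$-unirational.

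The main obstacle is this middle step: selecting the invariant $u$ and running the elimination cleanly, recognizing the resulting genus-zero curve as a conic, and above all translating ``a form of $\bP^1$ over $k$ is $k$-rational'' into the norm-residue symbol condition, uniformly in both characteristics, the characteristic-$2$ Artin--Schreier computation running parallel to but separately from the characteristic-$\ne2$ one.
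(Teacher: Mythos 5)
The paper quotes this theorem from Hoshi--Kang--Kitayama \cite[Proposition 1.13]{HKK} without reproducing a proof, so there is no internal proof to compare against; judged on its own, your argument is correct and is essentially the standard one. Your reduction to $G/N\le GL_1(\bZ)=\{\pm1\}$, the L\"uroth-type treatment of the subcase where $\sigma$ fixes $K$, the identification of $K(x)^{\langle\sigma\rangle}$ with the function field of the norm conic $s^2-at^2=-4ab$ (resp.\ $s^2+st+at^2=b$ in characteristic $2$), and the translation into the splitting of the quaternion algebra $(a,b)_{2,k}$ (resp.\ $[a,b)_{2,k}$) are exactly the ingredients of the cited proof and of the two-dimensional analogues this paper establishes via Theorem \ref{t2.6} and Section \ref{ssSB}.
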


\begin{theorem}[{Voskresenskii \cite[Theorem 2]{Vo}}]\label{thVos67}
Let $k$ be a field, $K/k$ be a finite extension and
$G$ be a finite subgroup of $GL_2(\bZ)$ acting on $K(x,y)$ by 
purely quasi-monomial $k$-automorphisms. 
Define $H=\{ \sigma \in G \ |\ \sigma(\alpha)=\alpha
\ {\rm for} \ {\rm any}\ \alpha \in K \}$.
If $H=\{1\}$, then $K(x,y)^G$ is $k$-rational.
\end{theorem}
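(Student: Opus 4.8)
The plan is to reinterpret $K(x_1,x_2)^G$ as the function field of a two-dimensional algebraic $k$-torus and then carry out Voskresenskii's proof that every such torus is rational. First, since $H=\{1\}$ the homomorphism $G\to\mathrm{Aut}_k(K)$ is injective, so together with $K^G=k$ the extension $K/k$ is Galois with $\mathrm{Gal}(K/k)\cong G$. After the reduction recalled just before the theorem (the kernel $N$ of $\varphi\colon G\to GL_2(\bZ)$ may be taken trivial), the lattice $M:=\bZ x_1\oplus\bZ x_2$ is a \emph{faithful} rank-two $G$-lattice and the purely quasi-monomial action on $K(x_1,x_2)=K(M)$ is the standard one; hence $K(x_1,x_2)^G$ is $k$-isomorphic to the function field $k(T_M)$ of the $k$-torus $T_M$ with character module $M$ and splitting field $K$. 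It therefore suffices to prove that every two-dimensional $k$-torus is $k$-rational.

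Second, I would reduce to finitely many cases. A finite subgroup of $GL_2(\bZ)$ stabilizes an integral positive-definite quadratic form, so up to conjugacy there are only finitely many of them (thirteen classes), with maximal representatives the dihedral groups of orders $8$ and $12$ — the symmetry groups of the square and hexagonal lattices. Using the classification of integral representations of these small groups, I would list, for each class of $G\subset GL_2(\bZ)$, the isomorphism classes of faithful rank-two $\bZ[G]$-lattices $M$ that occur.

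Third comes the verification, case by case, that $k(T_M)$ is $k$-rational. The routine ingredients are: (a) if $M\cong\bigoplus_i\bZ[G/H_i]$ is a permutation lattice, then $T_M\cong\prod_i R_{K^{H_i}/k}\mathbb{G}_m$, whose function field is a Zariski-open subset of affine space over $k$ (the multiplicative normal basis theorem, i.e.\ Hilbert 90), hence $k$-rational; (b) a direct-sum decomposition of $M$ corresponds to a product of tori, and a product of $k$-rational tori is $k$-rational, so one may assume $M$ indecomposable; (c) the rank-one lattices are the trivial lattice and the sign lattice of an index-two subgroup, giving $\mathbb{G}_m$ and the norm-one torus of a quadratic extension (a smooth conic with a rational point), and in any case every one-dimensional torus is $k$-rational by Theorem \ref{thHKK14a}(1). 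This leaves only a handful of indecomposable, non-permutation rank-two lattices, occurring when $G$ is one of $C_3,C_4,C_6,C_2\times C_2,S_3$ and the two dihedral groups — among them the character lattices of $R^{1}_{K/k}\mathbb{G}_m$ and of $R_{K/k}\mathbb{G}_m/\mathbb{G}_m$ for a cubic extension $K$. For each of these one exhibits an explicit birational trivialization of $T_M$ over $k$, for instance by choosing a presentation $0\to M\to P\to Q\to 0$ with $P$ permutation and peeling off the resulting rational variables, or by writing the equations of $T_M$ inside a restriction of scalars of affine space and solving them rationally.

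The main obstacle is precisely this last step. While these tori are all \emph{stably} $k$-rational, establishing genuine $k$-rationality forces the explicit birational models just described. A conceptually cleaner alternative is to pass to a smooth projective model $X$ of $T_M$: it is a $k$-form of a rational surface, and one can combine the classification of minimal rational surfaces over non-closed fields (conic bundles over conics, del Pezzo surfaces) with the structure of smooth toric compactifications of $T_M$ to conclude that $X$, hence $T_M$, is $k$-rational. Everything else — the reduction to tori, the group-theoretic classification, and the permutation and product reductions — is routine bookkeeping.
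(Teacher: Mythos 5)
The paper does not prove Theorem \ref{thVos67} at all: it is quoted verbatim from Voskresenskii \cite{Vo} and used as a black box throughout Section \ref{Sec:Proof}. So what you are really proposing is a reproof of Voskresenskii's theorem that every two-dimensional algebraic $k$-torus is $k$-rational. Your opening reductions are correct and standard: $H=\{1\}$ makes $K/k$ Galois with group $G$, the purely quasi-monomial action identifies $K(x,y)^G$ with the function field of the torus with splitting field $K$ and character lattice $M=\bZ x_1\oplus\bZ x_2$, and the bookkeeping steps (only thirteen conjugacy classes of finite subgroups of $GL_2(\bZ)$; permutation lattices give products of Weil restrictions, hence rational tori; direct summands split off as products; rank-one factors are handled by Theorem \ref{thHKK14a}(1)) are all fine.

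The genuine gap is the step you yourself flag as ``the main obstacle'': the rationality of the indecomposable, non-permutation rank-two lattices --- e.g.\ the norm-one torus $R^1_{K/k}\mathbb{G}_m$ and the quotient $R_{K/k}\mathbb{G}_m/\mathbb{G}_m$ for cubic and biquadratic $K$, and the tori attached to the standard representations of $C_4$, $C_6$, $S_3$, $D_4$, $D_6$. These are precisely the cases where the theorem has content, and neither of your two suggested devices is carried out. Worse, the first device is unsound as stated: from a resolution $0\to M\to P\to Q\to 0$ with $P$ permutation one gets an exact sequence of tori and, at best, \emph{stable} $k$-rationality of $T_M$; ``peeling off the resulting rational variables'' is exactly the passage from stable rationality to rationality that you cannot take for granted (indeed the whole point of Theorem \ref{thmain} is that in dimension two with nontrivial cocycles this passage can fail). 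Your second alternative --- compactify $T_M$ equivariantly to a smooth projective toric surface, observe it is a del Pezzo surface of degree $\geq 5$ or a conic bundle with a $k$-point (the identity of the torus), and invoke the Iskovskikh--Manin rationality criteria for minimal geometrically rational surfaces --- is the route that actually works, but you only name it; to close the argument you would need to identify the minimal model in each of the remaining cases and verify $K_X^2\geq 5$ together with $X(k)\neq\emptyset$. As it stands the proposal is a correct plan with the decisive verification missing.
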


\begin{theorem}[{Hajja \cite[Theorem 4]{Ha1}, \cite[Theorem]{Ha2}}]\label{thHaj8387}
Let $k$ be a field, $K/k$ be a finite extension and
$G$ be a finite subgroup of $GL_2(\bZ)$ acting on $K(x,y)$ by 
quasi-monomial $k$-automorphisms. 
Define $H=\{ \sigma \in G \ |\ \sigma(\alpha)=\alpha
\ {\rm for} \ {\rm any}\ \alpha \in K \}$.
If $H=G$, i.e. $K=k$, then $K(x,y)^G$ is $k$-rational.
\end{theorem}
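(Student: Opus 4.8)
Since $H=G$ is equivalent to $K=K^{G}=k$, the hypothesis says precisely that $G$ is a finite subgroup of $GL_2(\bZ)$ acting on $k(x,y)$ by $\sigma(x)=c_1(\sigma)\,x^{a_{1,1}}y^{a_{2,1}}$, $\sigma(y)=c_2(\sigma)\,x^{a_{1,2}}y^{a_{2,2}}$ with $[a_{i,j}]\in GL_2(\bZ)$ and all $c_i(\sigma)\in k^{\times}$ --- i.e.\ a (possibly twisted) monomial action in the sense of Hajja--Kang. The plan is to run the case analysis of Hajja over the conjugacy classes of finite subgroups of $GL_2(\bZ)$.

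Up to conjugacy there are only finitely many finite subgroups of $GL_2(\bZ)$; their orders are $1,2,3,4,6,8,12$, the maximal ones are the dihedral groups $D_4$ and $D_6$, and each of them is either cyclic ($C_1,C_2,C_3,C_4,C_6$) or contains a cyclic subgroup of index $2$ ($C_2\subset V_4$, $C_3\subset S_3$, $C_4\subset D_4$, $C_6\subset D_6$). Conjugating the lattice $M=\bZ x\oplus\bZ y$ is only a monomial change of variables, so I may assume $G$ is one of these standard subgroups, with new twisting constants in $k^{\times}$; and since $k(x,y)^{G}=\big(k(x,y)^{C}\big)^{G/C}$ for any normal subgroup $C$, it suffices to compute $k(x,y)^{C}$ for $C$ cyclic and then to deal with the induced action of $G/C$, which above always has order $\le 2$.

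For $C$ cyclic I would treat the three conjugacy classes of order-$2$ generators and then the generators of order $3,4,6$. In the order-$2$ cases ($\mathrm{diag}(1,-1)$, the coordinate swap, and $-I$) one absorbs part of the twist into $x,y$, reducing the action to $x\mapsto\pm x,\ y\mapsto b/y$, to $x\leftrightarrow y$, or to $x\mapsto a/x,\ y\mapsto b/y$, and then writes down a transcendence basis by hand; for $-I$, say, $k(x,y)^{\langle -I\rangle}=k(s,r,p)$ with $s=x+a/x$, $r=y+b/y$, $p=(x-a/x)(y-b/y)$ and $p^{2}=(s^{2}-4a)(r^{2}-4b)$, and the pencil of conics $p=\lambda(s^{2}-4a)$, which carries a section, rationalises the field over $k$. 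For a generator $\tau$ of order $3$ --- which cyclically permutes $x$, $c_1 y$, $c_1 c_2/(xy)$ --- one has $k(x,y)^{\langle\tau\rangle}=k(e_1,e_2,\delta)$ where $e_1,e_2$ are the first two elementary symmetric functions of this orbit (so $k(e_1,e_2)=k(x,y)^{S_3}$ is $k$-rational) and $\delta^{2}$ is the discriminant of the corresponding cubic; after suitable linear substitutions the relation $\delta^{2}=(\text{discriminant})$ collapses to one of ``cubic cone'' type $mn=4P^{3}$, visibly $k$-rational. The remaining cyclic cases and the groups $V_4,S_3,D_4,D_6$ are then obtained by iteration --- $k(x,y)^{C_4}=(k(x,y)^{\langle -I\rangle})^{C_2}$, $k(x,y)^{C_6}=(k(x,y)^{C_3})^{C_2}$, $k(x,y)^{V_4}=(k(x,y)^{C_2})^{C_2}$, and likewise $D_4,D_6,S_3$ over $C_4,C_6,C_3$ --- where at each step the order-$2$ automorphism induced on the already-rationalised field turns out to be again of monomial or linear-fractional shape over $k$, hence is handled by the same devices or by reduction to the one-variable Theorem~\ref{thHKK14a}.

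The main obstacle is the ``hexagonal'' family $C_3,C_6,S_3,D_6$ together with the bookkeeping of the twisting constants $c_j(\sigma)$: there the naive invariants do not exhibit the fixed field as an obvious rational field, and one has to locate the right pencil of rational curves while carrying the constants through. The argument nevertheless goes through uniformly --- in contrast with the main theorem of this paper, where genuine Brauer obstructions occur --- because when $K=k$ the twist is \emph{central}: it takes values in $k^{\times}$, on which $G$ acts trivially, and can therefore always be absorbed into the variables or parametrised away, never contributing a nontrivial class in a Brauer group.
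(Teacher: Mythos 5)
The paper offers no proof of this statement: it is quoted verbatim from Hajja's papers \cite{Ha1}, \cite{Ha2}, so there is no internal argument to compare yours against. What the paper does carry is the machinery such a proof needs, namely the explicit transcendence bases of Lemmas \ref{l2.1}--\ref{l2.4}, and measured against that your proposal has the right architecture but a genuine gap where the work actually lies. Your reduction scheme is sound: pass to conjugacy representatives, compute $k(x,y)^{C}$ for the cyclic normal subgroup $C$, then handle the index-$\le 2$ quotient. Your treatment of $-I$ is correct and complete --- the relation $p^{2}=(s^{2}-4a)(r^{2}-4b)$ defines a conic bundle over $\bP^{1}_{\lambda}$ with a section at infinity, so that case is fine. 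The problem is the order-$3$ generator with a nontrivial twist, $x\mapsto y\mapsto c/(xy)$ with $c\in k^{\times}$ not necessarily a cube. Your plan is to write the invariant field as $k(e_{1},e_{2},\delta)$ with $\delta^{2}$ the discriminant of $T^{3}-e_{1}T^{2}+e_{2}T-c$ and to claim this ``collapses to cubic cone type $mn=4P^{3}$.'' That reduction is not justified: depressing the cubic makes $p,q$ \emph{dependent} polynomials in $e_{1},e_{2}$ (with $c$ a constant), not free coordinates, and factoring $\delta^{2}+27q^{2}$ requires $\sqrt{-3}\in k$; moreover the theorem is asserted for \emph{any} field, including characteristics $2$ and $3$, where the discriminant formalism degenerates. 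The actual content here is a nontrivial explicit basis --- precisely what Lemma \ref{l2.3} (from \cite{HKY}) supplies --- and your sketch does not produce it.

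The same gap propagates to $C_{6}$, $S_{3}$ and $D_{6}$: you assert that after rationalising the $C_{3}$-invariants ``the order-$2$ automorphism induced on the already-rationalised field turns out to be again of monomial or linear-fractional shape,'' but after a non-monomial change of variables this is exactly what must be verified, not assumed; it is the bulk of the computation in \cite{Ha2} and in Sections \ref{ss39}--\ref{ss312} of this paper. Finally, your closing heuristic --- that a central twist with values in $k^{\times}$ ``can always be absorbed \ldots never contributing a nontrivial class in a Brauer group'' --- is not a valid general principle: already for three-dimensional monomial actions with $K=k$ there are non-rational examples (cf.\ the exceptional case in Corollary \ref{cor1.7}(2) and \cite{HKY}). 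Rationality in the two-dimensional case is a consequence of the specific geometry of rational surfaces over non-closed fields, not of the centrality of the twist, so this paragraph should be deleted or replaced by the explicit computations it is standing in for.
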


\begin{theorem}[{Hoshi, Kang and Kitayama \cite[Theorem 1.14]{HKK}}]
\label{thHKY14}
Let $k$ be a field, $K/k$ be a finite extension and
$G$ be a finite subgroup of $GL_2(\bZ)$ acting on $K(x,y)$ by 
purely quasi-monomial $k$-automorphisms. 
Define $H=\{ \sigma \in G \ |\ \sigma(\alpha)=\alpha
\ {\rm for} \ {\rm any}\ \alpha \in K \}$,
and we assume that $H\neq \{1\}$.\\
$(1)$ $K(x,y)^G$ is $k$-rational if
${\rm char}$ $k=2$ or
$(G,H) \not\simeq (C_4,C_2)$, $(D_4,C_2)$.\\
$(2)$ If ${\rm char}$ $k \neq 2$ and $(G,H)\simeq (C_4,C_2)$,
then we may assume that $K=k(\sqrt{a})$ and
$G=\langle \sigma\rangle$ acts on $K(x,y)$ by
$\sigma : \sqrt{a}\mapsto -\sqrt{a}, \ x\mapsto y, \ y\mapsto 1/x$.
Then $K(x,y)^G$ is $k$-rational if and only if the norm residue $2$-symbol $(a,-1)_{2,k}=0$.\\
$(3)$
If ${\rm char}$ $k \neq 2$ and $(G,H)\simeq (D_4,C_2)$,
then we may assume that $K=k(\sqrt{a},\sqrt{b})$ and
$G=\langle \sigma, \tau \rangle$ acts on $K(x,y)$ by
$\sigma : \sqrt{a}\mapsto -\sqrt{a}, \ \sqrt{b}\mapsto \sqrt{b}, \ x\mapsto y, \ y\mapsto 1/x$, \
$\tau : \sqrt{a}\mapsto \sqrt{a}, \ \sqrt{b}\mapsto -\sqrt{b}, \ x\mapsto y, \ y\mapsto x$.
Then $K(x,y)^G$ is $k$-rational if and only if $(a,-b)_{2,k}=0$.

Moreover, if $K(x,y)^G$ is not $k$-rational, then $k$ is an
infinite field, the Brauer group ${\rm Br}(k)$ is non-trivial, and
$K(x,y)^G$ is not $k$-unirational.
\end{theorem}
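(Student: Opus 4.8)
The plan is to split off the normal subgroup $H$, reduce the problem to an action of the Galois group $\mathrm{Gal}(K/k)$ on a rational function field in two variables over $K$, and then analyze that action case by case according to the conjugacy class of $G$ in $GL_2(\bZ)$. Since $H$ is the kernel of the homomorphism $G \to \mathrm{Aut}_k(K)$, it is normal in $G$, and from $K^{G/H} = K^G = k$ one gets that $K/k$ is Galois with group $G/H$. The subgroup $H$ acts on $K(x,y)$ fixing $K$ pointwise, i.e. as a group of purely monomial $K$-automorphisms, so Theorem \ref{thHaj8387}, applied with base field $K$, gives that $K(x,y)^H$ is $K$-rational; write $K(x,y)^H = K(s,t)$. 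Choosing $s,t$ suitably — in the cases at hand via substitutions such as $x\mapsto(1+x)/(1-x)$, which replace an inversion $x\mapsto x^{-1}$ by the sign change $x\mapsto -x$ — one arranges that $G/H$ acts on $K(s,t)$ again by quasi-monomial $k$-automorphisms with linear parts in $GL_2(\bZ)$, and then $K(x,y)^G = K(s,t)^{G/H}$.

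Next, run through the finite subgroups $G\le GL_2(\bZ)$ — $\mathcal{C}_2,\mathcal{C}_3,\mathcal{C}_4,\mathcal{C}_6,\mathcal{C}_2\times\mathcal{C}_2,\mathcal{S}_3,\mathcal{D}_4,\mathcal{D}_6$ — together with their nontrivial proper normal subgroups $H$ (the case $H=G$, i.e. $K=k$, being already Theorem \ref{thHaj8387}). In all but two cases a further coordinate change reduces $K(s,t)^{G/H}$ to a situation governed by Theorems \ref{thHaj8387} or \ref{thHKK14a} with vanishing obstruction: either the linear part of $G/H$ on the two variables becomes trivial, so that $K(s,t)^{G/H}$ is obtained by Galois descent and is $k$-rational; or one variable splits off as a $G/H$-stable transcendental carrying a one-dimensional quasi-monomial action whose twisting datum is a square, again giving $k$-rationality by Theorem \ref{thHKK14a}(2); or a variable can be eliminated from a $G/H$-equivariant quadratic relation to leave a purely transcendental extension of $k$. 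A direct check shows that such a reduction succeeds unconditionally when $\mathrm{char}\,k = 2$ and, in every characteristic, for every pair other than $(\mathcal{C}_4,\mathcal{C}_2)$ and $(\mathcal{D}_4,\mathcal{C}_2)$ with the embeddings in the statement; this proves (1).

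It remains to treat the two exceptional pairs in characteristic $\ne 2$. For $(\mathcal{C}_4,\mathcal{C}_2)$, with $K=k(\sqrt a)$ and $\sigma:\sqrt a\mapsto-\sqrt a,\ x\mapsto y,\ y\mapsto 1/x$, the substitution $x\mapsto(1+x)/(1-x)$, $y\mapsto(1+y)/(1-y)$ linearizes the $H$-action, and carrying out the reduction above one obtains $K(x,y)^G\cong k(T,U,V)$ with $T$ transcendental over $k(U,V)$ and the single relation $V^2=a(U^2+4)$; hence $K(x,y)^G$ is $k$-rational if and only if this conic has a $k$-rational point, equivalently the norm residue $2$-symbol $(a,-1)_{2,k}$ vanishes, which is (2). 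The pair $(\mathcal{D}_4,\mathcal{C}_2)$ is handled in the same way, the reduction producing a conic — treated directly, or via the conic-bundle criterion recalled later in the paper — associated with the symbol $(a,-b)_{2,k}$, which gives (3). The ``Moreover'' assertion follows because a nonsplit conic over $k$ gives a nonzero class in $\mathrm{Br}(k)$, so $k$ is infinite; and a nonsplit conic has no $k$-point and its function field is not $k$-unirational, hence neither is $K(x,y)^G$.

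The step I expect to be the main obstacle is the case analysis of the second paragraph together with the geometric identification of the third: one must verify, for each non-exceptional pair $(G,H)$, that the twisted $\mathrm{Gal}(K/k)$-action really does collapse — a body of explicit invariant-theoretic computations with no single unifying shortcut — and, in the two exceptional cases, pin down the birational type of $K(x,y)^G$ precisely as the function field of a conic, so that $k$-rationality is detected by a quaternion symbol. It is exactly at this last point that the Severi-Brauer-variety and conic-bundle machinery reviewed in the paper is needed.
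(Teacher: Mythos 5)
Your proposal is correct in outline and follows essentially the same route as the source: pass to $K(x,y)^H$, which is $K$-rational by Hajja's theorem, renormalize the residual $G/H$-action to a quasi-monomial one, and dispose of the non-exceptional pairs by explicit coordinate changes while identifying the two exceptional cases with function fields of conics detected by the symbols $(a,-1)_{2,k}$ and $(a,-b)_{2,k}$. Note that this paper does not itself prove Theorem \ref{thHKY14} but imports it from \cite{HKK}; your method is exactly the machinery the paper deploys in Sections \ref{Sec:Preliminaries}--\ref{Sec:Proof} for the generalization (compare Case 2 of Section \ref{ss35} and Case 2-1 of Section \ref{ss311}, which with $c=1$ recover parts (2) and (3)), the only cosmetic discrepancy being that your conic $V^2=a(U^2+4)$ differs by a coordinate change from the standard form $z^2-aw^2=-1$ while lying in the same Brauer class.
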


The main theorem of this paper can be described as follows.
Let $G$ be a finite subgroup of $GL_2(\bZ)$.
The rationality is determined up to conjugacy in $GL_2(\bZ)$
since a conjugate of $G$ corresponds to some base change of
$K(x,y)$.
There are 13 $GL_2(\bZ)$-conjugacy classes of finite subgroups of $GL_2(\bZ)$.
See \cite{BBNWZ}, \cite{GAP}.
We use the following notation and the groups of representatives (cf. \cite{HKK}).\\

Cyclic groups:
\begin{align*}
C_1 &:=\{ I\}, &  C_2^{(1)} &:=\langle -I \rangle, &
C_2^{(2)} &:=\langle \lambda \rangle, &
C_2^{(3)} &:=\langle \tau \rangle, \\
C_3 &:= \langle \rho^2 \rangle, &
C_4 &:= \langle \sigma \rangle, &
C_6 &:= \langle \rho \rangle&
\end{align*}
where
\[
I = \begin{bmatrix} 1&0\\0&1 \end{bmatrix}, \
\lambda = \begin{bmatrix} 1&0\\0&-1 \end{bmatrix}, \
\tau = \begin{bmatrix} 0&1\\1&0 \end{bmatrix}, \
\sigma = \begin{bmatrix} 0&-1\\1&0 \end{bmatrix}, \
\rho = \begin{bmatrix} 1&-1\\1&0 \end{bmatrix}.
\]

Non-cyclic groups:
\begin{align*}
V_4^{(1)} &:=\langle \lambda, -I \rangle, &
V_4^{(2)} &:=\langle \tau, -I \rangle, &
S_3^{(1)} &:=\langle \rho^2, \tau \rangle, &
S_3^{(2)} &:=\langle \rho^2, -\tau \rangle, \\
D_4 &:=\langle \sigma, \tau \rangle, &
D_6 &:=\langle \rho, \tau \rangle.
\end{align*}
Note that
$\lambda^2=\tau^2=I$, $\sigma^2=\rho^3=-I$ and $\tau\sigma=\lambda$.

\vspace{10pt}
Let $G$ be one of the above $13$ group acting on $K(x,y)$
by quasi-monomial $k$-automorphisms. 
We define 
$H=\{ \sigma \in G \ |\ \sigma(\alpha)=\alpha
\ {\rm for} \ {\rm any}\ \alpha \in K \}$ as above. 
Then $H$ becomes a normal subgroup of $G$ 
and $K$ is a $(G/H)$-Galois extension of $k$. 
Normal subgroups $H$ of $G$ are given as follows:
\begin{align*}
&C_2^{(1)} : H=\{1\}, \ \langle -I \rangle, &
&C_2^{(2)} : H=\{1\}, \ \langle \lambda \rangle,\\
&C_2^{(3)} : H=\{1\}, \ \langle \tau \rangle, &
&C_3 : H=\{1\}, \ \langle \rho^2 \rangle,\\
&C_4 : H=\{1\}, \ \langle \sigma^2 \rangle, \ \langle \sigma \rangle, &
&C_6 : H=\{1\}, \ \langle -I \rangle, \ \langle \rho^2 \rangle, \ \langle\rho\rangle, 
\end{align*}
\begin{align*}
&V_4^{(1)} : H=\{1\}, \ \langle -I \rangle, \ \langle \lambda \rangle, \  \langle -\lambda \rangle, \ \langle \lambda,-I \rangle,\\
&V_4^{(2)} : H=\{1\}, \ \langle -I \rangle, \ \langle \tau \rangle, \ \langle -\tau \rangle, \ \langle \tau,-I \rangle,\\
&S_3^{(1)} : H=\{1\}, \ \langle \rho^2 \rangle, \ \langle \rho^2, \tau \rangle,\\
&S_3^{(2)} : H=\{1\}, \ \langle \rho^2 \rangle, \ \langle \rho^2, -\tau \rangle,\\
&D_4 : H=\{1\}, \ \langle -I \rangle, \ \langle -I,\lambda \rangle, \ \langle -I,\tau \rangle, \ \langle \sigma \rangle, \ \langle \sigma, \tau \rangle, \\
&D_6 : H=\{1\}, \ \langle -I \rangle, \ \langle \rho^2 \rangle, \
\langle \rho \rangle, \ \langle \rho^2,\tau \rangle, \
\langle \rho^2,-\tau \rangle, \langle \rho,\tau \rangle.
\end{align*}

The following is the main theorem which solves 
the rationality problem of two-dimensional 
quasi-monomial actions under the condition 
that $c_j(\sigma)\in k\setminus\{0\}$ where
$c_j(\sigma)$ is given as in {\rm (iii)} of Definition \ref{defqm}. 
Note that we study only the case 
$G\neq H$ where 
$H=\{ \sigma \in G \ |\ \sigma(\alpha)=\alpha
\ {\rm for} \ {\rm any}\ \alpha \in K \}$ 
because the rationality problem for the case $G=H$ 
was already solved affirmatively by Hajja (see Theorem \ref{thHaj8387}). 
For the norm residue symbol $(a,b)_{n,k}$ of degree $n$ over $k$, 
see Section \ref{ssSB}. 

\begin{theorem}\label{thmain}
Let $k$ be a field with ${\rm char}$ $k \neq 2$, $3$,
$K/k$ be a finite extension
and $G$ be a finite subgroup of
$GL_2(\bZ)$ acting on $K(x,y)$ by quasi-monomial $k$-automorphisms. 
Let $(a,b)_{n,k}$ be the norm residue symbol of degree $n$ over $k$.
Let $\omega$ be a primitive cubic root of unity in $\overline{k}$. 
We assume that $c_j(\sigma)\in k\setminus\{0\}$
for any $\sigma \in G$ and any $1\leq j\leq 2$ 
where
$c_j(\sigma)$ is given as in {\rm (iii)} of Definition \ref{defqm}, 
and 
$G\neq H$ where
$H=\{ \sigma \in G \ |\ \sigma(\alpha)=\alpha
\ {\rm for} \ {\rm any}\ \alpha \in K \}$.\\
{\rm (1)} When $G=C_2^{(1)}=\langle-I\rangle$ and $H=\{1\}$, we may assume that $K=k(\sqrt{a})$ and
$-I : \sqrt{a}\mapsto -\sqrt{a}$, $x\mapsto b/x$, $y\mapsto c/y$
for $a,b,c \in k\setminus\{0\}$.
Then $K(x,y)^G$ is $k$-rational if and only if
$(a,b)_{2,k}=0$ and $(a,c)_{2,k}=0$.\\
{\rm (2)} When $G=C_2^{(2)}=\langle\lambda\rangle$ and $H=\{1\}$, we may assume that $K=k(\sqrt{a})$ and
$\lambda : \sqrt{a}\mapsto -\sqrt{a}$, $x\mapsto x$, $y\mapsto b/y$
for $a,b\in k\setminus\{0\}$.
Then $K(x,y)^G$ is $k$-rational if and only if $(a,b)_{2,k}=0$.\\
{\rm (3)} When $G=C_2^{(3)}=\langle\tau\rangle$ and $H=\{1\}$, we may assume that $K=k(\sqrt{a})$ and
$\tau: \sqrt{a}\mapsto -\sqrt{a}$, $x\mapsto y$, $y\mapsto x$ 
for $a \in k\setminus\{0\}$. 
Then $K(x,y)^G$ is $k$-rational.\\
{\rm (4)} When $G=C_3=\langle\rho^2\rangle$ and $H=\{1\}$, 
we may assume that $\rho^2 : x\mapsto y, y\mapsto c/(xy)$ for $c\in k\setminus\{0\}$. 
We may also assume that 
$K(\omega)=k(\omega,\sqrt[3]{\alpha})$ for some $\alpha\in k(\omega)$ 
and $\rho^2: \sqrt[3]{\alpha}\mapsto \omega\sqrt[3]{\alpha}$. 
Then 
$K(x,y)^G$ is $k$-rational if and only if $(\alpha,c)_{3,k(\omega)}=0$.\\ 
{\rm (5)} When $G=C_4=\langle\sigma\rangle$, 
we may assume that 
$\sigma : x\mapsto y$, $y\mapsto c/x$ for $c\in k\setminus\{0\}$.

{\rm (I)} if $H=\{1\}$, then
we may also assume that $K=k(\alpha,\beta)$ and
$\sigma : \alpha\mapsto \beta$, $\beta\mapsto -\alpha$. 
We have $k(\alpha^2)=K^{\langle\sigma^2\rangle}$ with $[k(\alpha^2):k]=2$. 
Then $K(x,y)^G$ is $k$-rational if and only if $(\alpha^2,c)_{2,k(\alpha^2)}=0$; 

{\rm (II)} if $H=\langle \sigma^2 \rangle$,
then we may also assume that $K=k(\sqrt{a})$ and $\sigma : \sqrt{a}\mapsto -\sqrt{a}$ 
for $a\in k\setminus\{0\}$.
Then $K(x,y)^G$ is $k$-rational if and only if 
$(a,c)_{2,k}=0$ and $(a,-c)_{2,k}=0$.\\
{\rm (6)} When $G=C_6=\langle\rho\rangle$, we may assume that $\rho : x\mapsto xy$, $y\mapsto 1/x$. 
Then $K(x,y)^G$ is $k$-rational.\\
{\rm (7)} When $G=V_4^{(1)}=\langle\lambda,-I\rangle=\langle\lambda,-\lambda\rangle$,

{\rm (I)} if $H=\{1\}$, then we may assume that $K=k(\sqrt{a},\sqrt{b})$ and
$\lambda : \sqrt{a}\mapsto -\sqrt{a}$, $\sqrt{b}\mapsto \sqrt{b}$,
$x\mapsto x$, $y\mapsto d/y$,
$-\lambda : \sqrt{a}\mapsto \sqrt{a}$, $\sqrt{b}\mapsto -\sqrt{b}$,
$x\mapsto c/x$, $y\mapsto y$ for $a,b,c,d\in k\setminus\{0\}$.
Then $K(x,y)^G$ is $k$-rational if and only if
$(a,d)_{2,k}=0$ and $(b,c)_{2,k}=0$;

{\rm (II)} if $H=\langle -I \rangle$, then we may assume that $K=k(\sqrt{a})$ and
$\lambda : \sqrt{a}\mapsto -\sqrt{a}$, $x\mapsto x$, $y\mapsto d/y$,
$-I : \sqrt{a}\mapsto \sqrt{a}$, $x\mapsto c/x$, $y\mapsto d/y$
for $a,c,d\in k\setminus\{0\}$.
Then $K(x,y)^G$ is $k$-rational if and only if
$(a,d)_{2,k(\sqrt{cd})}=0$;

{\rm (III)} if $H=\langle \lambda \rangle$,
then we may assume that $K=k(\sqrt{a})$ and
$\lambda : \sqrt{a}\mapsto \sqrt{a}$, $x\mapsto \ep_1x$, $y\mapsto  d/y$,
$-I : \sqrt{a}\mapsto -\sqrt{a}$, $x\mapsto c/x$, $y\mapsto d/y$
for $a,c,d\in k\setminus\{0\}$, $\ep_1=\pm 1$.

If $\ep_1=1$, then $K(x,y)^G$ is $k$-rational if and only if
$(a,c)_{2,k}=0$.

If $\ep_1=-1$, then $K(x,y)^G$ is $k$-rational if and only if
$(a,-c)_{2,k(\sqrt{ad})}=0$;

{\rm (IV)} if $H=\langle -\lambda \rangle$,
then we may assume that $K=k(\sqrt{a})$ and
$-\lambda : \sqrt{a}\mapsto \sqrt{a}$, $x\mapsto c/x$,
$y\mapsto \ep_2 y$,
$-I : \sqrt{a}\mapsto -\sqrt{a}$, $x\mapsto c/x$,
$y\mapsto d/y$ for $a,c,d\in k\setminus\{0\}$, $\ep_2 =\pm 1$.

If $\ep_2=1$, then $K(x,y)^G$ is $k$-rational if and only if
$(a,d)_{2,k}=0$.

If $\ep_2=-1$, then $K(x,y)^G$ is $k$-rational if and only if
$(a,-d)_{2,k(\sqrt{ac})}=0$.\\
{\rm (8)} When $G=V_4^{(2)}=\langle\tau,-I\rangle$, 
we may assume that $\tau : x\mapsto y$, $y\mapsto x$,
$-I : x\mapsto c/x$, $y\mapsto c/y$ for $c\in k\setminus\{0\}$. 

{\rm (I)} if $H=\{1\}$, then we may also assume that $K=k(\sqrt{a},\sqrt{b})$ and
$\tau : \sqrt{a}\mapsto -\sqrt{a}$, $\sqrt{b}\mapsto \sqrt{b}$, 
$-I : \sqrt{a}\mapsto \sqrt{a}$, $\sqrt{b}\mapsto -\sqrt{b}$ for $a,b\in k\setminus\{0\}$.
Then $K(x,y)^G$ is $k$-rational if and only if $(b,c)_{2,k(\sqrt{a})}=0$;

{\rm (II)} if $H=\langle \tau \rangle$, $\langle -I \rangle$ or $\langle -\tau \rangle$,
then $K(x,y)^G$ is $k$-rational.\\
{\rm (9)} When $G=S_3^{(1)}=\langle\rho^2,\tau\rangle$, 
we may assume that 
$\rho^2: x\mapsto y, y\mapsto c/(xy)$, $\tau: x\mapsto y, y\mapsto x$ 
for $c\in k\setminus\{0\}$. 

{\rm (I)} if $H=\{1\}$, then 
we may also assume that 
$K(\omega)=F(\omega,\sqrt[3]{\alpha})$ for some $\alpha\in F(\omega)$ 
and $\rho^2: \sqrt[3]{\alpha}\mapsto \omega\sqrt[3]{\alpha}$ where 
$F=K^{\langle\rho^2\rangle}$ with $[F:k]=2$ and $F(\omega)=k(\alpha,\omega)$.
Then 
$K(x,y)^G$ is $k$-rational if and only if $(\alpha,c)_{3,k(\alpha,\omega)}=0$; 

{\rm (II)} if $H=\langle \rho^2 \rangle$, then $K(x,y)^G$ is $k$-rational.\\
{\rm (10)} When $G=S_3^{(2)}=\langle\rho^2,-\tau\rangle$, we may assume that 
$\rho^2 :  x\mapsto y$, $y\mapsto 1/(xy)$, 
$-\tau :  x\mapsto 1/y$, $y\mapsto 1/x$.
Then $K(x,y)^G$ is $k$-rational.\\
{\rm (11)} When $G=D_4=\langle\sigma,\tau\rangle$, 
we may assume that $\sigma : x\mapsto y$, $y\mapsto c/x$, 
$\tau : x\mapsto \ep y$, $y\mapsto \ep x$ for $c\in k\setminus\{0\}$, $\ep=\pm 1$. 

{\rm (I)} if $H=\{1\}$, then we may also assume that 
$K=k(\alpha,\beta)$ and
$\sigma : \alpha\mapsto\beta$, $\beta\mapsto -\alpha$, 
$\tau : \alpha\mapsto\beta$, $\beta\mapsto\alpha$. 
We have $k(\alpha^2)=K^{\langle\sigma^2,\sigma\tau\rangle}$ 
with $[k(\alpha^2):k]=2$. 

If $\ep=1$, then
$K(x,y)^G$ is $k$-rational if and only if 
$(\alpha^2,c)_{2,k(\alpha^2)}=0$. 

If $\ep=-1$,  then
$K(x,y)^G$ is $k$-rational if and only if 
$(\alpha^2,-\beta^2c)_{2,k(\alpha^2)}=0$; 

{\rm (II)} if $H=\langle -I \rangle$,
then we may also assume that $K=k(\sqrt{a},\sqrt{b})$ and
$\sigma : \sqrt{a}\mapsto -\sqrt{a}$, $\sqrt{b}\mapsto \sqrt{b}$, 
$\tau : \sqrt{a}\mapsto \sqrt{a}$, $\sqrt{b}\mapsto -\sqrt{b}$ for $a,b\in k\setminus\{0\}$. 
Then $K(x,y)^G$ is $k$-rational if and only if
$(a,\ep c)_{2,k}=0$ and $(a,-\ep bc)_{2,k}=0$;

{\rm (III)} if $H=\langle -I, \tau \rangle$,
then we may also assume that $K=k(\sqrt{a})$ and
$\sigma : \sqrt{a}\mapsto -\sqrt{a}$, 
$\tau : \sqrt{a}\mapsto \sqrt{a}$ for $a\in k\setminus\{0\}$.
Then $K(x,y)^G$ is $k$-rational if and only if $(a,\ep c)_{2,k}=0$;

{\rm (IV)} if $H=\langle -I, \tau\sigma \rangle$ or $\langle\sigma\rangle$, 
then $K(x,y)^G$ is $k$-rational.\\
{\rm (12)} When $G=D_6=\langle\rho,\tau\rangle$, 
we may assume that 
$\rho : x\mapsto  xy$, $y\mapsto 1/x$, 
$\tau : x\mapsto y$, $y\mapsto x$. 
Then $K(x,y)^G$ is $k$-rational.

Moreover, if $K(x,y)^G$ is not $k$-rational, then $k$ is an infinite
field, the Brauer group ${\rm Br}(k)$ is non-trivial, and $K(x,y)^G$
is not $k$-unirational.
\end{theorem}

\begin{remark}\label{r1.7}
(1) In Theorem \ref{thmain}, 
when $H=\{1\}$ and $c_j(\sigma)=1$ for any $\sigma\in G$ and any $1\leq j\leq 2$, 
i.e. the action of $G$ is faithful on $K$ and purely monomial, $K(x,y)^G$ is isomorphic to 
the function field of an algebraic $k$-torus $T$ with ${\rm dim}\, T=2$ 
(see Hoshi, Kang and Kitayama \cite[Section 1]{HKK}, Hoshi \cite[Section 1]{Ho}, 
Hoshi and Yamasaki \cite[Chapter 1]{HY1}). 
In this case, $k(T)\simeq K(x,y)^G$ is $k$-rational by Voskresenskii's theorem (see Theorem \ref{thVos67}).\\
(2) When $H=\{1\}$, because $K(x,y)^G\otimes_k K=K(x,y)$ is $K$-rational, 
one of the referees suggested to the authors that the proof of Theorem \ref{thmain} 
may be modified from the viewpoint of algebraic geometry.  
He/She told that for each case one can consider equivariant compactification of the corresponding 
 (split) algebraic $K$-torus $T\otimes_k K$ and apply equivariant MMP, 
then get a certain  (split) $G$-minimal toric surface over $K$, and 
in Theorem \ref{thmain}, for the cases (1) $C_2^{(1)}$, 
(2) $C_2^{(2)}$, (7) $V_4^{(1)}$ 
it is isomorphic to $\bP^1_K\times \bP^1_K$ with the $G$-invariant Picard number $2$; 
for the cases (3) $C_2^{(3)}$, (5) $C_4$, (8) $V_4^{(2)}$, (11) $D_4$ 
it is isomorphic to 
$\bP^1_K\times \bP^1_K$ with the $G$-invariant Picard number $1$ 
($=2-1$; there is an element in the group that maps one projective line to the other); 
for the cases (4) $C_3$, (10) $S_3^{(2)}$ 
it is isomorphic to $\bP^2_K$; 
and 
for the cases (6) $C_6$, (9) $S_3^{(1)}$, 
(12) $D_6$ it is a $G$-minimal del Pezzo surface of degree $6$ over $K$ 
(see Colliot-Th\'el\`ene, 
Karpenko and Merkurjev \cite[Section 2, Section 4, Proposition 5.3]{CTKM}, 
Xie \cite[Section 3, Section 4]{Xi}, 
see also Iskovskikh \cite{Is4}, 
Mori \cite[Theorem 2.7]{Mo}, 
Manin \cite[Theorem 29.4, Appendix]{Ma}, 
Voskresenskii and Klyachko \cite{VK}, 
Koll\'ar \cite[III.2, III.3]{Ko1}, 
Colliot-Th\'el\`ene \cite[Chapitres I, II, III]{CT}, 
Dolgachev and Iskovskikh \cite{DI}). 

In particular, 
the cases (4) $C_3$, (10) $S_3^{(2)}$ 
correspond to Severi-Brauer surfaces, i.e. $k$-forms of $\bP^2_k$, 
and also to $K_\alpha(I_{G/H})^G$ where 
$I_{G/H}={\rm Ker}(\ep)$ is the kernel of 
the augmentation map $\ep$: 
$0\to I_{G/H}\to \bZ[G/H]\xrightarrow{\ep} \bZ\to 0$ with 
$G=C_3$ or $S_3$, and $[G:H]=3$
(see Section \ref{ssSB} and Hoshi and Yamasaki \cite[Section 1]{HY1}). 
On the other hand,  
the case (9) $S_3^{(1)}$ 
corresponds to del Pezzo surfaces of degree $6$ 
and to $K_\alpha(J_{G/H})^G$ where 
$J_{G/H}=(I_{G/H})^\circ={\rm Hom}_\bZ(I_{G/H},\bZ)$ 
is the dual lattice of $I_{G/H}$,  
and $K(J_{G/H})^G$ (purely quasi-monomial case) 
can be regarded as the function field of 
norm one tori $R_{K/k}^{(1)}(\bG_{m,K})$ 
(see Hoshi and Yamasaki \cite{HY2}). 
Note that the action of $G$ on $K(x_1,\ldots,x_n)$ 
is given via $[a_{i,j}]_{1\leq i,j\leq n}\in GL_n(\bZ)$ 
``vertically'' in this paper (Definition \ref{defqm}) 
as in Kunyavskii \cite{Ku}, \cite{HKK}, \cite{HKiY} 
although it is given ``horizontally'' as in the papers 
Roquette \cite{Ro}, 
Endo and Miyata \cite{EM}, \cite{HY1}, \cite{HY2}, 
(we should switch the case $S_3^{(1)}$ (resp. $J_{G/H}$) 
with $S_3^{(2)}$ (resp. $I_{G/H}$) each other, see also 
\cite[page 174]{HY1}, \cite[Section 2]{Ku}). 
\end{remark}

As an application, we obtain some rationality result 
of $k_\alpha(M)^G$ up to $5$-dimensional cases 
under the action of monomial $k$-automorphisms. 
\begin{corollary}\label{cor1.8}
Let $G$ be one of the finite subgroups 
$C_2^{(3)}=\langle\tau\rangle$, 
$C_6=\langle\rho\rangle$, 
$S_3^{(2)}=\langle\rho^2,-\tau\rangle$, 
$D_6=\langle\rho,\tau\rangle$ of $GL_2(\bZ)$ 
as in Theorem \ref{thmain} $(3)$, $(6)$, $(10)$, $(12)$. 
Let $k$ be a field with {\rm char} $k\neq 2, 3$, 
$M$ be a $G$-lattice with 
$M=M_1\oplus M_2$ as $\bZ[G]$-modules where
$1\leq {\rm rank}_\bZ \, M_1\leq 3$, ${\rm rank}_\bZ \, M_2=2$ and 
$G$ act on $k_\alpha(M)$ by monomial $k$-automorphisms
where 
$M_2$ is a faithful $G$-lattice 
and the action of $G\leq GL_2(\bZ)$ 
on $k_\alpha(M_2)$ is 
given as in {\rm (iii)} of Definition \ref{defqm}.\\ 
{\rm (1)} If ${\rm rank}_\bZ \, M_1=1$ or $2$, 
then $k_\alpha(M)^G$ is $k$-rational;\\
{\rm (2)} If ${\rm rank}_\bZ \, M_1=3$, then 
$k_\alpha(M)^G$ is $k$-rational except for the case 
$G=D_6$ and the action of $G$ on $k_\alpha(M_1)$ is given 
as $G_{3,1,1}=\langle\tau_1,\lambda_1\rangle\simeq V_4$ in \cite{HKiY}. 
\end{corollary}

We organize this paper as follows. 
In Section 2, we prepare some known results about rationality problems. 
We also give a brief review of the Severi-Brauer variety 
with some examples and theorems which will be used in the proof of 
Theorem \ref{thmain}.  
In Section 3, the proof of Theorem \ref{thmain} is given 
by case-by-case analysis. 
The proof of Corollary \ref{cor1.8} is also given. 

\begin{acknowledgments}
We would like to thank Ming-chang Kang 
for giving us useful and valuable comments. 
In particular, he explains Section \ref{ssSB} 
about Severi-Brauer varieties 
with some examples and rationality results to us. 
We thank him for his help and generosity. 

We also thank the referees who gave us many valuable comments. 
In particular, one of them detected incorrect transformations of the variables 
in the proof of Theorem \ref{thmain} which leads us to the correct ones 
and also suggested 
a significant improvement of the proof of Theorem \ref{thmain} 
from the viewpoint of algebraic geometry including 
Remarks \ref{r1.7} (2), \ref{r3.1}, \ref{r3.2}, \ref{r3.3}, \ref{r3.4}. 
\end{acknowledgments}

%
\section{Preliminaries}\label{Sec:Preliminaries}
%
\subsection{Some explicit transcendental bases} \label{Subsec:Bases}

\begin{lemma}[{Hashimoto, Hoshi and Rikuna \cite[page 1176]{HHR}}]\label{l2.1}
Let $k$ be a field with {\rm char} $k\neq 2$ and $-I\in GL_2(\bZ)$ act on
$k(x,y)$ by $k$-automorphism 
\begin{align*}
-I : x\mapsto \frac{a}{x},\ y\mapsto \frac{a}{y}\quad (a\in k\setminus\{0\}).
\end{align*}
Then $k(x,y)^{\langle -I\rangle}=k(s,t)$ where
\begin{align*}
s =\frac{xy+a}{x+y},\quad t =\frac{xy-a}{x-y}.
\end{align*}
\end{lemma}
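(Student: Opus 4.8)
The plan is a short Galois-theoretic degree count. First I would record that $\sigma := -I$ is a $k$-automorphism of $k(x,y)$ of order exactly $2$ (here $a\neq 0$, so $x\mapsto a/x$ is not the identity), hence by Artin's theorem $[k(x,y):k(x,y)^{\langle\sigma\rangle}]=2$. Next I would check by direct substitution that $s$ and $t$ are $\sigma$-invariant:
\[
\sigma(s)=\frac{(a/x)(a/y)+a}{(a/x)+(a/y)}=\frac{a^2+axy}{a(x+y)}=s,\qquad
\sigma(t)=\frac{(a/x)(a/y)-a}{(a/x)-(a/y)}=\frac{a^2-axy}{a(y-x)}=t,
\]
so $k(s,t)\subseteq k(x,y)^{\langle\sigma\rangle}$. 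Since $k(x,y)^{\langle\sigma\rangle}$ has transcendence degree $2$ over $k$, the elements $s,t$ are algebraically independent over $k$.

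It then suffices to prove $[k(x,y):k(s,t)]\le 2$, because together with the inclusions above this forces $[k(x,y):k(s,t)]=[k(x,y):k(x,y)^{\langle\sigma\rangle}]=2$ and hence $k(s,t)=k(x,y)^{\langle\sigma\rangle}$. To get the bound I would show $k(x,y)=k(s,t,p)$ with $p:=x+y$, and that $p$ has degree $\le 2$ over $k(s,t)$. Put $q:=xy$. From $s(x+y)=xy+a$ we get $q=sp-a\in k(s,t,p)$; then (noting $t\neq 0$ in $k(x,y)$) $x-y=(xy-a)/t=(q-a)/t\in k(s,t,p)$, and since $\operatorname{char}k\neq 2$ we recover $x=\tfrac12\bigl(p+(q-a)/t\bigr)$ and $y=\tfrac12\bigl(p-(q-a)/t\bigr)$ in $k(s,t,p)$, so $k(x,y)=k(s,t,p)$. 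Finally, substituting $q=sp-a$ into the identity $t^2(x-y)^2=(xy-a)^2$, i.e. $t^2(p^2-4q)=(q-a)^2$, yields
\[
(t^2-s^2)\,p^2+4s(a-t^2)\,p+4a(t^2-a)=0,
\]
a quadratic relation for $p$ over $k(s,t)$ whose leading coefficient $t^2-s^2$ is nonzero because $s,t$ are algebraically independent. Hence $[k(s,t,p):k(s,t)]\le 2$, which completes the argument.

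The computations are entirely routine; there is no serious obstruction. The only point needing a moment's care is confirming that the quadratic for $p$ is genuine (nonzero leading coefficient), which is why I would isolate the algebraic independence of $s$ and $t$ via the transcendence-degree comparison before invoking it; everything else is bookkeeping of elementary symmetric functions of $x$ and $y$.
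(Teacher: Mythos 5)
The paper does not actually prove this lemma --- it is quoted from Hashimoto--Hoshi--Rikuna \cite{HHR} --- so there is no in-paper argument to compare against. Your overall strategy (Artin's theorem gives index $2$; $s,t$ are visibly $\sigma$-invariant; then bound $[k(x,y):k(s,t)]$ by $2$ via the quadratic satisfied by $p=x+y$) is sound and is the standard way to verify such a statement. The computations themselves --- the invariance of $s$ and $t$, the recovery $q=sp-a$ and $x-y=(q-a)/t$, and the relation $(t^2-s^2)p^2+4s(a-t^2)p+4a(t^2-a)=0$ --- all check out.

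There is, however, one genuine logical slip: the sentence ``Since $k(x,y)^{\langle\sigma\rangle}$ has transcendence degree $2$ over $k$, the elements $s,t$ are algebraically independent over $k$'' is a non sequitur. Membership in a field of transcendence degree $2$ says nothing about two particular elements being independent (consider $s$ and $s^2$). Algebraic independence of $s,t$ follows only once you know $k(x,y)$ is algebraic over $k(s,t)$ --- which is exactly what you are in the middle of proving --- so, as written, the argument is circular at the point where you invoke $t^2-s^2\neq 0$. The fix is easy and uses only what you already have: the constant term $4a(t^2-a)$ of your quadratic is nonzero, since $t^2=a$ would mean $(xy-a)^2=a(x-y)^2$, i.e.\ $(x^2-a)(y^2-a)=0$ in $k(x,y)$, which is false. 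Hence the quadratic is a nonzero polynomial of degree at most $2$, so $p$ is algebraic over $k(s,t)$ and $[k(x,y):k(s,t)]=[k(s,t,p):k(s,t)]\le 2$ already follows (you never actually need the leading coefficient to be nonzero); moreover $k(x,y)$ is then algebraic over $k(s,t)$, so $\mathrm{trdeg}_k\,k(s,t)=2$ and the independence of $s,t$ comes out as a consequence rather than an input. With the steps reordered in this way the proof is complete.
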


\begin{lemma}[{Hajja and Kang \cite[Lemma 2.7]{HK2}, Kang \cite[Theorem 2.4]{Ka2}}]\label{l2.2}
Let $k$ be a field and $-I\in GL_2(\bZ)$ act on
$k(x,y)$ by $k$-automorphism
\begin{align*}
-I : x\mapsto \frac{a}{x},\ y\mapsto \frac{b}{y}\quad
(a,b\in k\setminus\{0\})
\end{align*}
where $b=c(x+\frac{a}{x})+d$ with $(c,d)\neq (0,0)$.
Then $k(x,y)^{\langle -I\rangle}=k(u,v)$ where
\begin{align*}
u =\frac{x-\frac{a}{x}}{xy-\frac{ab}{xy}},\quad 
v =\frac{y-\frac{b}{y}}{xy-\frac{ab}{xy}}.
\end{align*}
\end{lemma}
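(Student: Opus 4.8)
The plan is to invert the given coordinate change by hand. Write $\sigma=-I$, $L=k(x,y)$, and assume $\mathrm{char}\,k\neq 2$ (the relevant case for Theorem~\ref{thmain}). Since $\sigma$ fixes $x+a/x$, it fixes $b=c(x+a/x)+d$ as well, so the prescribed $\sigma$ is a well-defined $k$-automorphism of $L$ of order $2$; it is nontrivial because $\sigma(x)=a/x\neq x$. Put
\[
X=x-\frac{a}{x},\quad Y=y-\frac{b}{y},\quad Z=xy-\frac{ab}{xy},
\]
\[
w=x+\frac{a}{x},\quad N=y+\frac{b}{y},\quad W=xy+\frac{ab}{xy}.
\]
A direct check gives $\sigma(X)=-X$, $\sigma(Y)=-Y$, $\sigma(Z)=-Z$, while $\sigma$ fixes $w,N,W$. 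Hence $u=X/Z$ and $v=Y/Z$ are $\sigma$-invariant, so $k(u,v)\subseteq L^{\langle\sigma\rangle}$, and the task is to prove the reverse inclusion.

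To this end I would first record the elementary identities
\[
x=\tfrac{1}{2}(w+X),\qquad y=\tfrac{1}{2}(N+Y),\qquad xy=\tfrac{1}{2}(W+Z),
\]
\[
X^2=w^2-4a,\qquad Y^2=N^2-4b,\qquad 2W=wN+XY,
\]
the last being $(x+\tfrac{a}{x})(y+\tfrac{b}{y})+(x-\tfrac{a}{x})(y-\tfrac{b}{y})=2(xy+\tfrac{ab}{xy})$. Substituting the first two into the third identity of the first line and using $2W=wN+XY$ gives $2Z=wY+XN$; dividing by $Z$ and using $X=uZ$, $Y=vZ$ yields the linear relation $uN+vw=2$. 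On the other hand $u^2Z^2=X^2$ and $v^2Z^2=Y^2$ give $v^2(w^2-4a)=u^2(N^2-4b)$; eliminating $N$ via $uN=2-vw$ and substituting $b=cw+d$, a short computation produces the closed form
\[
w=\frac{av^2-du^2+1}{cu^2+v}\in k(u,v)
\]
(the denominator $cu^2+v$, like $u$ and $v$, is a nonzero rational function of $x,y$). Therefore $N=(2-vw)/u\in k(u,v)$ and $Z^2=(w^2-4a)/u^2\in k(u,v)$, and then $x=\tfrac{1}{2}(w+uZ)$ and $y=\tfrac{1}{2}(N+vZ)$ lie in $k(u,v)(Z)$. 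Consequently $L=k(x,y)=k(u,v)(Z)$, so $[L:k(u,v)]\leq 2$.

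Finally, $L/L^{\langle\sigma\rangle}$ is Galois of degree $2$ by Artin's theorem, and from $k(u,v)\subseteq L^{\langle\sigma\rangle}\subsetneq L$ with $[L:k(u,v)]\leq 2$ it follows that $L^{\langle\sigma\rangle}=k(u,v)$. Comparing transcendence degrees, $\mathrm{trdeg}_k\,k(u,v)=\mathrm{trdeg}_k\,L^{\langle\sigma\rangle}=2$, so $u,v$ are algebraically independent over $k$ and $L^{\langle\sigma\rangle}=k(u,v)$ is $k$-rational with the stated generators. The only genuine computation is the simplification giving the closed formula for $w$; this is exactly where the affine shape $b=c(x+a/x)+d$ is essential, since it is what forces $w$ — and hence $N$ and $Z^2$ — to lie in $k(u,v)$ rather than in a quadratic extension of it.
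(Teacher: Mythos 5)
The paper does not prove this lemma at all: it is imported verbatim from Hajja--Kang \cite[Lemma 2.7]{HK2} and Kang \cite[Theorem 2.4]{Ka2}, so there is no in-paper argument to compare against. Your proof is a correct, self-contained verification in the case ${\rm char}\,k\neq 2$. I checked the computations: $\sigma$ does fix $b$ and hence is an involution; $X,Y,Z$ are anti-invariant and $w,N,W$ invariant; the identities $2W=wN+XY$ and $2Z=wY+XN$ hold, giving $uN+vw=2$; and eliminating $N$ from $v^2(w^2-4a)=u^2(N^2-4b)$ with $b=cw+d$ does yield $w=(av^2-du^2+1)/(cu^2+v)$. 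The one assertion you leave implicit, that $cu^2+v\neq 0$, is true: $cu^2+v=(cX^2+YZ)/Z^2$ and $cX^2+YZ$, viewed as a Laurent polynomial in $y$ over $k(x)$, has leading term $xy^2\neq 0$. From there the degree count $k(u,v)\subseteq L^{\langle\sigma\rangle}\subsetneq L=k(u,v)(Z)$ with $Z^2\in k(u,v)$ correctly forces $L^{\langle\sigma\rangle}=k(u,v)$. Two caveats: the lemma as stated in the paper carries no characteristic hypothesis, and your argument genuinely uses $2\in k^{\times}$ (the decompositions $x=\frac12(w+X)$ etc.\ degenerate in characteristic $2$), so you prove slightly less than the quoted statement --- though only the ${\rm char}\,k\neq 2,3$ case is ever invoked in the proof of Theorem \ref{thmain} (Case 5 of Section \ref{ss311}); and it would be worth one explicit sentence that $Z\notin k(u,v)$ is not needed anywhere, since the inequality $[L:k(u,v)]\geq 2$ is extracted from Artin's theorem rather than from irreducibility of $T^2-Z^2$ over $k(u,v)$.
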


\begin{lemma}[{Hoshi, Kitayama and Yamasaki \cite[Lemma 3.6]{HKiY}}]\label{l2.3}
Let $k$ be a field and $\sigma$ be a $k$-automorphism on $k(x,y)$ defined by
\begin{align*}
\sigma : x\mapsto y\mapsto \frac{b}{xy}\mapsto x\quad (b\in k\setminus\{0\}).
\end{align*}
Then $k(x,y)^{\langle\sigma\rangle}=k(u,v)$ where
\begin{align*}
u&= \frac{y(y^3x^3+bx^3-3byx^2+b^2)}{y^2x^4-y^3x^3+y^4x^2-byx^2-by^2x+b^2},\\
v&= \frac{x(x^3y^3+by^3-3bxy^2+b^2)}{y^2x^4-y^3x^3+y^4x^2-byx^2-by^2x+b^2}.
\end{align*}
\end{lemma}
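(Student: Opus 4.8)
The plan is to pass to symmetric-function coordinates, in which the $\sigma$-invariance of $u$ and $v$ is immediate, and then to identify the fixed field by an elimination. Put $x_1=x$, $x_2=y$, $x_3=b/(xy)$, so that $x_1x_2x_3=b$, the map $\sigma$ is the $3$-cycle $x_1\mapsto x_2\mapsto x_3\mapsto x_1$, and $k(x,y)=k(x_1,x_2,x_3)$. Expanding and using $x_1x_2x_3=b$, the common denominator of $u,v$ equals $(x_1x_2)^2S$ with $S=x_1^2+x_2^2+x_3^2-x_1x_2-x_2x_3-x_3x_1$, while the numerators of $u$ and $v$ are $(x_1x_2)^2T$ and $(x_1x_2)^2T'$ with $T=x_1x_2^2+x_2x_3^2+x_3x_1^2-3x_1x_2x_3$ and $T'=x_1^2x_2+x_2^2x_3+x_3^2x_1-3x_1x_2x_3$. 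Hence $u=T/S$ and $v=T'/S$, and since $S$ is symmetric while $T,T'$ are invariant under the $3$-cycle, $\sigma$ fixes $u$ and $v$; thus $k(u,v)\subseteq k(x,y)^{\langle\sigma\rangle}$.

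As $\langle\sigma\rangle\simeq C_3$ acts faithfully on $k(x,y)$, we have $[k(x,y):k(x,y)^{\langle\sigma\rangle}]=3$, so it is enough to prove the reverse inclusion $k(x,y)^{\langle\sigma\rangle}\subseteq k(u,v)$. Write $e_1,e_2$ for the first two elementary symmetric functions of $x_1,x_2,x_3$ and $\delta=(x_1-x_2)(x_2-x_3)(x_3-x_1)$, so $\delta^2=e_1^2e_2^2-4e_1^3b+18e_1e_2b-4e_2^3-27b^2$. From the formulas above, $T+T'=e_1e_2-9b$ and $T-T'=\delta$, so
\[
u+v=\frac{e_1e_2-9b}{e_1^2-3e_2},\qquad u-v=\frac{\delta}{e_1^2-3e_2}.
\]
Eliminating $\delta$ between the second relation and $\delta^2=\operatorname{disc}$, and combining with the first relation, yields two polynomial equations in $e_1,e_2$ with coefficients in $k(u,v)$; solving them rationally shows $e_1,e_2\in k(u,v)$, whence also $\delta=(u-v)(e_1^2-3e_2)\in k(u,v)$. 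Finally $[k(x,y):k(e_1,e_2)]=6$ (the $S_3$-action on the surface $x_1x_2x_3=b$, with quotient field $k(e_1,e_2)$), and $\operatorname{disc}$ is not a square in $k(e_1,e_2)$ (it has odd degree in $e_2$), so $[k(e_1,e_2,\delta):k(e_1,e_2)]=2$; comparing with $[k(x,y):k(x,y)^{\langle\sigma\rangle}]=3$ gives $k(x,y)^{\langle\sigma\rangle}=k(e_1,e_2,\delta)\subseteq k(u,v)$, and we are done.

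The one substantial step is the elimination: one has to check that the system for $(e_1,e_2)$ over $k(u,v)$ really admits a \emph{rational} solution — equivalently, that the resultant obtained by eliminating $y$ from $u=N_u(x,y)/D(x,y)$ and $v=N_v(x,y)/D(x,y)$ reduces, after discarding spurious factors, to a polynomial of degree exactly $3$ in $x$. The naive resultant has larger degree, so one must single out the geometrically meaningful factor; this is a finite but bulky symbolic calculation, most safely carried out with computer algebra, which in passing also gives an explicit cubic for $x$ over $k(u,v)$ and an expression for $y$ as a rational function of $x,u,v$. Everything else is routine.
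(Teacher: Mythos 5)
The paper does not prove this lemma at all: it is quoted verbatim from Hoshi--Kitayama--Yamasaki \cite[Lemma 3.6]{HKY}, so there is no internal proof to compare against and your argument must stand on its own. Its first half does. Setting $x_1=x$, $x_2=y$, $x_3=b/(xy)$, your identities are correct: the common denominator is $(x_1x_2)^2S$ with $S=e_1^2-3e_2$, the numerators are $(x_1x_2)^2T$ and $(x_1x_2)^2T'$ with $T=x_1x_2^2+x_2x_3^2+x_3x_1^2-3b$ and $T'=x_1^2x_2+x_2^2x_3+x_3^2x_1-3b$, and hence $u=T/S$, $v=T'/S$ are visibly $\sigma$-invariant, with $u+v=(e_1e_2-9b)/(e_1^2-3e_2)$ and $u-v=\delta/(e_1^2-3e_2)$. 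This cleanly gives $k(u,v)\subseteq k(x,y)^{\langle\sigma\rangle}$ and $[k(x,y):k(x,y)^{\langle\sigma\rangle}]=3$.

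The reverse inclusion is where the entire content of the lemma lies, and you do not establish it. Your reduction to showing $e_1,e_2\in k(u,v)$ is legitimate (in characteristic $\neq 2$, where $k(x,y)^{\langle\sigma\rangle}=k(e_1,e_2,\delta)$ is a quadratic extension of $k(e_1,e_2)$), but the claim that the system
\[
(u+v)(e_1^2-3e_2)=e_1e_2-9b,\qquad (u-v)^2(e_1^2-3e_2)^2=\mathrm{disc}(e_1,e_2)
\]
can be ``solved rationally'' for $e_1,e_2$ over $k(u,v)$ is precisely equivalent to the birationality of the map $(e_1,e_2,\delta)\mapsto(u,v)$, i.e.\ to the lemma itself; a priori the system has several solutions, conjugate over $k(u,v)$, and nothing you write rules out that $e_1,e_2$ generate a proper algebraic extension of $k(u,v)$. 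You explicitly defer this verification to an unperformed computer-algebra computation, so the proof is incomplete at its only substantial step. Two further caveats: the lemma is stated for an arbitrary field, and in characteristic $2$ your identification $k(x,y)^{\langle\sigma\rangle}=k(e_1,e_2,\delta)$ fails outright, since there $\delta=e_1e_2+b$ is fixed by all of $S_3$ (and $\mathrm{disc}=(e_1e_2+b)^2$ is a square); also the recovery of $u,v$ from $u+v$ and $u-v$ divides by $2$. To complete the argument you must either carry out the elimination and exhibit $e_1,e_2$ explicitly as elements of $k(u,v)$, or argue directly that $[k(x,y):k(u,v)]\le 3$ by producing $y$ as a rational function of $x,u,v$ together with a cubic relation for $x$ over $k(u,v)$.
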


\begin{lemma}[{Chu, Hoshi, Hu and Kang \cite[page 252, Step 4]{CHHK}}]\label{l2.4}
Let $k$ be a field with {\rm char} $k\neq 3$ and
$\sigma$ be a $k$-automorphism on $k(x,y)$ defined by
\begin{align*}
\sigma : x\mapsto y\mapsto \frac{1}{xy}\mapsto x.
\end{align*}
Assume that $\omega\in k$ where $\omega$ is a primitive 
cubic root of unity in $\overline{k}$.
Then there exist $u,v\in k(x,y)$ such that $k(u,v)=k(x,y)$
and
\[
\sigma: u\mapsto \omega u,\ v\mapsto \omega^{-1} v.
\]
Indeed, we can obtain such $u$ and $v$ as
\begin{align*}
u=\frac{1+\omega^{-1} x+\omega xy}{1+x+xy},\quad
v=\frac{1+\omega x+\omega^{-1} xy}{1+x+xy}.\\
\end{align*}
\end{lemma}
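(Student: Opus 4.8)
\textbf{Proof plan for Lemma \ref{l2.4}.}
The plan is to verify directly that the two explicitly given rational
functions $u,v$ generate $k(x,y)$ and are simultaneous eigenvectors for
$\sigma$, then invoke a degree/Galois-theoretic argument to conclude that
they in fact generate the whole field. First I would record the three
points of the orbit, writing $z:=1/(xy)$ so that $\sigma$ cyclically
permutes $x\mapsto y\mapsto z\mapsto x$ with the single relation
$xyz=1$. This makes the symmetry transparent: the numerator
$1+\omega^{-1}x+\omega xy$ of $u$ and the common denominator $1+x+xy$ are
both ``Lagrange-resolvent''-type combinations of the orbit
$\{x,y,z\}$ once one clears using $xy=1/z$. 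Indeed, multiplying numerator
and denominator of $u$ by $z$ and using $xyz=1$ rewrites them as
$\omega$-weighted sums over the cyclic orbit, which is exactly the setting
in which a Lagrange resolvent is an eigenvector.

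The key computational step is to check the eigenvalue relations
$\sigma(u)=\omega u$ and $\sigma(v)=\omega^{-1}v$. Applying $\sigma$ means
substituting $x\mapsto y$, $y\mapsto z=1/(xy)$ into the formulas. I would
compute $\sigma(1+x+xy)$ and show it equals $(1+x+xy)$ up to a monomial
factor coming from the relation $xyz=1$; the same monomial factor appears
in $\sigma$ of the numerator, so it cancels in the quotient, leaving a
clean scalar $\omega$ (resp. $\omega^{-1}$). Concretely, since
$\sigma(xy)=yz=y/(xy)=1/x$ and $\sigma(x)=y$, one finds that
$\sigma(1+x+xy)=1+y+1/x=(x+xy+1)/x=(1+x+xy)/x$, and an analogous
one-line substitution into $1+\omega^{-1}x+\omega xy$ produces
$\omega\cdot(1+\omega^{-1}x+\omega xy)/x$; dividing gives
$\sigma(u)=\omega u$, and the computation for $v$ is identical with
$\omega$ and $\omega^{-1}$ interchanged. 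This is the step most prone to
arithmetic slips, so I would double-check it by verifying
$\sigma^3=\mathrm{id}$ is consistent with eigenvalues $\omega,\omega^{-1}$
(their product $u v$ and their third powers $u^3,v^3$ must be
$\sigma$-invariant).

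Finally I would argue that $k(u,v)=k(x,y)$. Since $\sigma$ has order $3$
and $\mathrm{char}\,k\neq 3$ with $\omega\in k$, the extension
$k(x,y)/k(x,y)^{\langle\sigma\rangle}$ is cyclic of degree $3$, so
$[k(x,y):k(u,v)]$ divides $3$; as $u,v$ are nonconstant and not both
$\sigma$-invariant (they have eigenvalues $\omega\neq 1$), they do not lie
in the fixed field, so $k(u,v)\not\subset k(x,y)^{\langle\sigma\rangle}$
and the degree cannot be $3$, forcing $k(u,v)=k(x,y)$. Alternatively, and
more robustly, I would exhibit $x$ and $y$ explicitly as rational
functions of $u$ and $v$ by solving the linear system: the three
quantities $1,x,xy$ can be recovered from $u,v$ and the invariant
$u+v+1$-type symmetric combination, since the three Lagrange resolvents
(for eigenvalues $1,\omega,\omega^{-1}$) form an invertible linear
transform of $(1,x,xy)$ whose inverse has entries in $k$ (here the
Vandermonde determinant in $1,\omega,\omega^{-1}$ is invertible because
$\mathrm{char}\,k\neq 3$). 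The main obstacle is purely bookkeeping: keeping
the cyclic relation $xyz=1$ consistently applied so that the monomial
factors cancel correctly; there is no conceptual difficulty once the
resolvent structure is recognized.
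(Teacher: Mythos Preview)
The paper does not supply its own proof of this lemma; it is quoted from \cite{CHHK} and used as a black box, so there is no argument in the paper to compare against.

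Your plan is sound and would give a valid proof. The eigenvalue check is correct: $\sigma(1+x+xy)=1+y+1/x=(1+x+xy)/x$ and $\sigma(1+\omega^{-1}x+\omega xy)=(\omega+x+\omega^{-1}xy)/x=\omega(1+\omega^{-1}x+\omega xy)/x$, and the common factor $1/x$ cancels in the quotient, giving $\sigma(u)=\omega u$; the computation for $v$ is identical with $\omega\leftrightarrow\omega^{-1}$.

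One genuine gap to flag: your first argument that $k(u,v)=k(x,y)$ does not work as stated. From the fact that $[k(x,y):k(x,y)^{\langle\sigma\rangle}]=3$ you cannot conclude that $[k(x,y):k(u,v)]$ divides $3$ without first knowing $k(x,y)^{\langle\sigma\rangle}\subseteq k(u,v)$, and that containment is not automatic (it would amount to knowing that $u^3,v^3,uv$ already generate the fixed field, which is essentially what you are trying to prove). Rely on your second approach instead, which is clean and complete: summing the three resolvents gives $1+u+v=3/D$ with $D=1+x+xy$, so $D\in k(u,v)$; then $(D,uD,vD)$ is related to $(1,x,xy)$ by the invertible character matrix with determinant $3(\omega-\omega^{2})\neq 0$ (here $\mathrm{char}\,k\neq 3$ is used), so $x,xy\in k(u,v)$ and hence $y=xy/x\in k(u,v)$.
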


\subsection{A rationality criterion for conic bundles of $\bP^1$} 
%
Let $(a,b)_{2,k}$ be the Hilbert symbol over $k$, 
i.e. the norm residue symbol of degree $2$ over $k$. 
First we recall well-known lemma (see Lam \cite[page 58]{La}).
\begin{lemma}\label{l2.5}
Let $k$ be a field with {\rm char} $k \neq 2$.\\
{\rm (1)} $(a,1-a)_{2,k}=0$ and $(a, -a)_{2,k}=0$ 
for any $a \in k \setminus \{0 \}$.\\
{\rm (2)} For any $a, b \in k \setminus \{0 \}$, the following three statements are equivalent:\\
{\rm (i)} $(a, b)_{2,k}=0$;\\ 
{\rm (ii)} the quadratic form $X_1^2 -aX_2^2 -bX_3^2$ has a non-trivial zero in $k$;\\
{\rm (iii)} the quadratic form $X_1^2 -aX_2^2 -bX_3^2 +abX_4^2$ has a non-trivial zero in $k$.
\end{lemma}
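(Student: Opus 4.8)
The plan is to reduce everything to the classical dictionary between quaternion algebras and low-rank quadratic forms over a field of characteristic $\neq 2$, and then read off both parts; all the ingredients are standard and may be found in Lam \cite{La}.

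First I would prove part (2), since part (1) follows from it at once. Recall that for $a,b\in k\setminus\{0\}$ the symbol $(a,b)_{2,k}$ is the class in $\mathrm{Br}(k)$ of the quaternion algebra $A=\left(\frac{a,b}{k}\right)=k\langle i,j\rangle$ with $i^2=a$, $j^2=b$, $ji=-ij$, and that such an $A$ is either a division algebra or isomorphic to $M_2(k)$; hence $(a,b)_{2,k}=0$ iff $A$ splits iff $A$ contains a nonzero non-unit iff the reduced norm form of $A$ is isotropic over $k$. Computing the reduced norm of $x_0+x_1i+x_2j+x_3ij$ gives exactly the quaternary form $X_1^2-aX_2^2-bX_3^2+abX_4^2$ of (iii), which establishes (i) $\Leftrightarrow$ (iii). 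For (i) $\Leftrightarrow$ (ii) I would use the companion description: $A$ splits iff $b$ is a norm from $k(\sqrt a)/k$ (automatic when $a\in k^{\times 2}$), equivalently iff $X_1^2-aX_2^2-bX_3^2=0$ has a nonzero $k$-solution --- the solutions with $X_3\neq 0$ being the norm presentations of $b$, while a solution with $X_3=0$ forces $a\in k^{\times 2}$ and then $A$ splits anyway. (Alternatively one can route (ii) $\Leftrightarrow$ (iii) through Pfister theory: $\langle\langle a,b\rangle\rangle=\langle 1,-a,-b,ab\rangle$ is isotropic iff hyperbolic iff its pure part $\langle -a,-b,ab\rangle$ is isotropic, and a short similarity argument shows this ternary form represents $0$ iff $\langle 1,-a,-b\rangle$ does.)

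Granting part (2), part (1) is immediate from the equivalence (i) $\Leftrightarrow$ (ii): the form $X_1^2-aX_2^2-(1-a)X_3^2$ vanishes at $(1,1,1)$, so $(a,1-a)_{2,k}=0$, and the form $X_1^2-aX_2^2+aX_3^2$ vanishes at $(0,1,1)$, so $(a,-a)_{2,k}=0$. (One may instead exhibit these as norms, $N_{k(\sqrt a)/k}(1+\sqrt a)=1-a$ and $N_{k(\sqrt a)/k}(\sqrt a)=-a$ when $a\notin k^{\times 2}$, the case $a\in k^{\times 2}$ being vacuous; and $(a,-a)_{2,k}=0$ also drops out of the Steinberg relation by bimultiplicativity together with $2$-torsion of the symbol.)

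I do not expect a genuine obstacle: the lemma repackages well-known facts about quaternion algebras and quadratic forms. The only points needing care are cosmetic --- pinning down the sign conventions for the symbol so that the forms in (ii) and (iii) appear exactly as stated, and isolating the degenerate subcases in which one of $a$, $b$, $ab$ is a square in $k^{\times}$, where all three conditions of part (2) hold simultaneously and trivially.
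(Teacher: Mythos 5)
Your proof is correct. The paper does not prove this lemma at all --- it simply cites it as well known from Lam \cite[page 58]{La} --- and your argument (splitting of the quaternion algebra $\left(\frac{a,b}{k}\right)$ $\Leftrightarrow$ isotropy of its reduced norm form for (i)$\Leftrightarrow$(iii), the norm criterion from $k(\sqrt{a})/k$ for (i)$\Leftrightarrow$(ii), and the explicit zeros $(1,1,1)$ and $(0,1,1)$ for part (1)) is precisely the standard treatment found in that reference, so there is nothing to compare beyond noting that you have supplied the proof the paper omits.
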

The following is a fundamental result which 
we will use in the proof of Theorem \ref{thmain}. 
\begin{theorem}[{Hajja, Kang and Ohm \cite[Theorem 6.7]{HKO}, Kang \cite[Theorem 4.2]{Ka3}, see also Hoshi, Kang and Kitayama \cite[Lemma 4.1]{HKK} for the last statement}]\label{t2.6}
Let $k$ be a field with {\rm char} $k\neq 2$
and $K=k(\sqrt{a})$ be a quadratic field extension of $k$.
Take ${\rm Gal}(K/k)= \langle \sigma \rangle$ and
extend the action of $\sigma$ to $K(x,y)$ by
\[
\sigma: \sqrt{a} \mapsto -\sqrt{a},\ x \mapsto x,\ y\mapsto
\frac{f(x)}{y}\quad (f(x)\in k[x]).
\]
Then we have $K(x,y)^{\langle\sigma\rangle}=k(z_1,z_2,x)$ 
with the relation $z_1 ^2-az_2 ^2=f(x)$ where 
$z_1=\frac{1}{2}(y+\frac{f(x)}{y})$, 
$z_2=\frac{1}{2\sqrt{a}}(y-\frac{f(x)}{y})$.\\
{\rm (1)} When $f(x)=b$, $K(x,y)^{\langle\sigma\rangle}$ is
$k$-rational if and only if 
$(a,b)_{2,k}=0$.\\
{\rm (2)} When $\deg f(x)=1$,
$K(x,y)^{\langle\sigma\rangle}$ is always $k$-rational.\\
{\rm (3)} When 
$f(x)=b(x^2-c)$ for some $b,c\in k\setminus \{0\}$, then
$K(x,y)^{\langle\sigma\rangle}$ is $k$-rational if and only
if $(a,b)_{2,k}\in {\rm Br}(k(\sqrt{ac})/k)$, 
i.e. $(a,b)_{2,k(\sqrt{ac})}=0$. 

Moreover, if $K(x,y)^{\langle\sigma\rangle}$ is not $k$-rational,
then $k$ is an infinite field, the Brauer group ${\rm Br}(k)$ is
non-trivial, and $K(x,y)^{\langle\sigma\rangle}$ is not
$k$-unirational.
\end{theorem}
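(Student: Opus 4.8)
The plan is to split the argument into a structural reduction valid for an arbitrary $f(x)\in k[x]$, followed by the three special cases, and finally the ``moreover'' clause.

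First I would verify the claimed description of the fixed field. Since $\sigma$ fixes $x$ and has order $2$, I set $z_1=\frac{1}{2}(y+\frac{f(x)}{y})$ and $z_2=\frac{1}{2\sqrt{a}}(y-\frac{f(x)}{y})$ (the $\sigma$-invariant combinations of $y$ and $f(x)/y$); a direct check gives $\sigma(z_1)=z_1$, $\sigma(z_2)=z_2$, $z_1^2-az_2^2=f(x)$, and $y=z_1+\sqrt{a}\,z_2$. Hence $K(x,y)=k(x,z_1,z_2)(\sqrt{a})$, and since $\sqrt{a}\notin K(x,y)^{\langle\sigma\rangle}\supseteq k(x,z_1,z_2)$ while $[K(x,y):K(x,y)^{\langle\sigma\rangle}]=2$, a degree count forces $K(x,y)^{\langle\sigma\rangle}=k(x,z_1,z_2)$ with the single relation $z_1^2-az_2^2=f(x)$. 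Thus the problem becomes the $k$-rationality of the function field of the affine surface $S:\ z_1^2-az_2^2=f(x)$, which I view as a conic bundle over $\bP^1_x$ with generic fibre the conic attached to the symbol $(a,f(x))_{2,k(x)}$, or, when $\deg f\le 2$, as a quadric surface. The guiding principle, supplied by the conic/Severi--Brauer material of Section \ref{ssSB}, is that if the generic conic already has a $k(x)$-rational point then $S$ is rational over $k(x)$, hence over $k$; the content lies in deciding rationality when it does not.

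For case (2) this principle is unnecessary: when $\deg f=1$ I can solve $x$ linearly from $z_1^2-az_2^2=f(x)$, so $x\in k(z_1,z_2)$ and $k(x,z_1,z_2)=k(z_1,z_2)$ is purely transcendental of degree $2$, giving unconditional $k$-rationality. For case (1), with $f(x)=b$ the relation no longer involves $x$, so $K(x,y)^{\langle\sigma\rangle}=k(x)(C_0)$ where $C_0$ is the conic $z_1^2-az_2^2=b$ over $k$. By Lemma \ref{l2.5} this conic has a $k$-point iff $(a,b)_{2,k}=0$, in which case $k(C_0)=k(w)$ is rational; conversely, since a conic is stably $k$-rational iff it has a $k$-point, rationality of $k(x)(C_0)=k(C_0)(x)$ forces $(a,b)_{2,k}=0$.

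The substantial case is (3), $f(x)=b(x^2-c)$. Here $S$ is the smooth affine quadric $z_1^2-az_2^2-bx^2+bc=0$, whose function field equals that of the projective quadric surface $Q\subset\bP^3$ attached to the quaternary form $q=\langle 1,-a,-b,bc\rangle$. I would invoke the standard fact that a smooth quadric surface is $k$-rational iff $Q(k)\neq\emptyset$, i.e.\ iff $q$ is isotropic over $k$, and then translate isotropy into the stated symbol condition. Since $\det q\equiv ac\pmod{(k^\times)^2}$, the relevant quadratic extension is $L=k(\sqrt{ac})$. The hard part is the algebraic identification: via the theory of the even Clifford invariant of a quaternary form one computes $C_0(q)\cong(a,b)_L$ as a quaternion algebra over $L$, and a quaternary form with discriminant $d$ is isotropic over $k$ precisely when $C_0(q)$ splits over $L=k(\sqrt{d})$. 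Combining these gives $q$ isotropic $\iff (a,b)_{2,L}=0$, i.e.\ $(a,b)_{2,k(\sqrt{ac})}=0$, the asserted criterion. I expect this translation --- the residue/Clifford-invariant computation and its reconciliation with the conic-bundle rationality criterion of Section \ref{ssSB} --- to be the main obstacle, both viewpoints reducing, as in \cite{HKO}, \cite{Ka3}, to controlling the single ramified closed point $x^2-c$ of $\bP^1_x$.

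Finally I would deduce the ``moreover'' clause uniformly. When the field is not $k$-rational the criterion produces a nonzero norm residue symbol, hence a nontrivial class in $\mathrm{Br}(k)$ (resp.\ pulled back from $\mathrm{Br}(L)$); since the Brauer group of a finite field vanishes, $k$ must be infinite and $\mathrm{Br}(k)\neq 0$. In that situation the conic $C_0$ (case (1)) or quadric $Q$ (case (3)) has no $k$-point, so over the infinite field $k$ it cannot be $k$-unirational --- a $k$-unirational variety carries a Zariski-dense set of $k$-points --- and therefore neither is $K(x,y)^{\langle\sigma\rangle}$, which is birational to $C_0\times\bP^1$ or to $Q$.
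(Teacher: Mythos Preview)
The paper does not actually prove Theorem \ref{t2.6}; it is quoted in the Preliminaries as a known result from \cite{HKO}, \cite{Ka3}, and \cite{HKK}, and is used later as a black box. So there is no ``paper's own proof'' to compare against. That said, your argument is correct and is essentially the standard one: the reduction to the affine surface $z_1^2-az_2^2=f(x)$, case~(2) by elimination of $x$, case~(1) via the conic $C_0$ and Amitsur/Ch\^atelet (stably rational $\Rightarrow$ $k$-point), and case~(3) by recognising the projective closure as the smooth quadric $q=\langle 1,-a,-b,bc\rangle$ and using the fact that a $4$-dimensional form with signed discriminant $d\notin k^{\times 2}$ is isotropic over $k$ iff it is isotropic over $k(\sqrt{d})$, where it becomes the Pfister form of $(a,b)$. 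This is exactly the line taken in \cite{HKO} and \cite{Ka3}.

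One small point in your ``moreover'' clause for case~(3): from $(a,b)_{2,L}\neq 0$ with $L=k(\sqrt{ac})$ you only directly get $\mathrm{Br}(L)\neq 0$, not $\mathrm{Br}(k)\neq 0$, and your parenthetical ``pulled back from $\mathrm{Br}(L)$'' does not close this gap. The clean fix is to observe that if $q=\langle 1,-a,-b,bc\rangle$ is anisotropic over $k$ then already the subform $\langle 1,-a,-b\rangle$ is anisotropic, which forces $(a,b)_{2,k}\neq 0$ and hence $\mathrm{Br}(k)\neq 0$; the rest of your argument (Chevalley--Warning for finiteness, density of rational points for non-unirationality) is fine.
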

Theorem \ref{t2.6} is related to the rationality of conic bundles of 
$\bP^1$ over a non-closed field $k$ investigated by Iskovskikh 
\cite{Is1}, \cite{Is2}, \cite{Is3}, \cite{Is4}, 
see also Hoshi, Kang, Kitayama and Yamasaki \cite[Section 1, Lemma 2.1]{HKKY}, 
Yamasaki \cite{Ya}.

\subsection{Severi-Brauer varieties and rationality}\label{ssSB}

We will give a brief review of the Severi-Brauer variety. 
For a detailed discussion, 
see Serre \cite[page 160]{Se}, Kang \cite{Ka1}, 
Saltman \cite[Chapter 13]{Sa3}, Gille and Szamuely \cite[Chapter 5]{GS} 
and Koll\'{a}r \cite{Ko2}. 

Let $k$ be a field, $\mathrm{Br}(k):=H^2(\Gamma_k,k_{\rm sep}^{\times})$ be its Brauer group
where $k_{\rm sep}$ is the separable closure of $k$ and $\Gamma_k=\mathrm{Gal}(k_{\rm sep}/k)$.
Recall that $\mathrm{Br}(k)=\displaystyle\bigcup_L H^2(\mathrm{Gal}(L/k),L^{\times})$
where $L$ runs over finite Galois extensions of $k$.
If $L/k$ is a Galois extension with $G:=\mathrm{Gal}(L/k)$
and $m$ is any positive integer,
the exact sequence $1\to L^{\times}\to GL_m(L)\to PGL_m(L)\to 1$ gives rise to the injective map
$\delta_m: H^1(G,PGL_m(L))\to H^2(G,L^{\times})$
(see Serre \cite[page 158]{Se} and Roquette \cite[Theorem 1]{Ro});
moreover, we have $H^2(G,L^{\times})=\displaystyle\bigcup_{m\in\bN}\mathrm{Image}(\delta_m)$.

For any $2$-cocycle $\gamma: G\times G\to L^{\times}$,
denote by $[\gamma]$ the cohomology class associated to $\gamma$.
If $[\gamma]\in H^2(G,L^{\times})$ and $[\gamma]=\delta_m([\beta])$
where $m\in\bN$ and $[\beta]\in H^1(G,PGL_m(L))$, 
we may define the Severi-Brauer variety over $k$ corresponding to $[\gamma]$ (and also $[\beta]$)
as in Kang \cite[Definition 2]{Ka1} and Gille and Szamuely \cite[Section 5.2]{GS};
it is denoted by $V_m(\gamma)$ or $V_m(\beta)$.
The function field of $V_m(\gamma)$ is denoted by $F_m(\gamma)$
which is called the {\it $m$-th Brauer field of $[\gamma]$} 
in Roquette \cite[page 412]{Ro}.
It is known that $V_m(\gamma)\otimes_k \mathrm{Spec}(L)$ and $\mathbb{P}^{m-1}\otimes_k \mathrm{Spec}(L)$
are isomorphic as schemes over $\mathrm{Spec}(L)$.
Moreover, the following statements are equivalent 
(see Serre \cite[page 161]{Se}, Roquette \cite[Theorem 2.5]{Ro}, Saltman \cite[Theorem 13.8]{Sa3} and 
Gille and Szamuely \cite[Theorem 5.1.3]{GS}):\\
(i) $V_m(\gamma)$ is isomorphic to $\mathbb{P}^{m-1}$ over $k$;\\
(ii) $V_m(\gamma)$ has a $k$-rational point;\\
(iii) $F_m(\gamma)$ is $k$-rational;\\
(iv) the cohomology class $[\gamma]\in H^2(G,L^{\times})$ is zero.

\begin{example}\label{ex2.8}
We will explain how to construct $F_m(\gamma)$.
Suppose $L/k$ is a Galois extension with $G=\mathrm{Gal}(L/k)$
and $[\gamma]=\delta_m([\beta])$ for some $1$-cocycle $\beta: G\to PGL_m(L)$.
Consider the projection $GL_m(L)\to PGL_m(L)$.
For each $\sigma \in G$, $\beta(\sigma)\in PGL_m(L)$.
Choose a matrix $T_{\sigma}\in GL_m(L)$ such that $T_{\sigma}$ is a preimage of $\beta(\sigma)$.
Let $L(x_1,\ldots, x_m)$ be the rational function field of $m$ variables over $L$.
We define an action of $G$ on $L(x_1,\ldots, x_m)$.
For each $\sigma\in G$, $\sigma$ acts on $L$ by the prescribed Galois extension;
define $\sigma\cdot x_i=x_i$ for $1\leq i\leq m$.
We still use the notation $\sigma$ for the action defined above.
Now $T_{\sigma}$ is naturally a linear map on the vector space $\displaystyle\bigoplus_{1\leq i\leq m}L\cdot x_i$.
Thus $T_{\sigma}$ defines a map $T_{\sigma}: L(x_1,\ldots, x_m) \to L(x_1,\ldots, x_m)$.
Define $u_{\sigma}: L(x_1,\ldots, x_m)\to L(x_1,\ldots, x_m)$ by $u_{\sigma}=T_{\sigma}\cdot \sigma$.
Then each $u_{\sigma}$ is a $k$-algebra automorphism of $L(x_1,\ldots, x_m)$
and $u_{\sigma}$ preserves the degrees of polynomials in $L[x_1,\ldots, x_m]$.
It follows that, for any homogeneous polynomials
$f, g\in L[x_1,\ldots, x_m]\setminus \{0\}$ of the same degree,
it is easy to show that $(u_{\sigma}\cdot u_{\tau})(f/g)=u_{\sigma\tau}(f/g)$
where $\sigma, \tau\in G$.
Thus $\widehat{G}:=\{ u_{\sigma}\ |\  \sigma\in G\}$ is an automorphism subgroup of $L(x_i/x_m\ |\ 1\leq i\leq m-1)$.
The field $F_m(\gamma)$ is defined as $F_m(\gamma):=L(x_i/x_m\ |\ 1\leq i\leq m-1)^{\widehat{G}}$.
For details, see Roquette \cite[Section 5]{Ro} and Kang \cite[page 232]{Ka1}.
\end{example}

\begin{example}\label{ex2.9}
Consider the special case of a cyclic extension $L/k$ where $G=\mathrm{Gal}(L/k)=\langle\sigma\rangle\simeq C_n$.
For any $b\in k^{\times}$, define a matrix $T\in GL_n(L)$ by
\[
T=\left(\begin{array}{@{}ccc;{3pt/2pt}c@{}}
0&\cdots & 0 & b\\\hdashline[3pt/2pt]
1&  & &0\\
& \ddots & &\vdots\\
&  & 1& 0
\end{array}\right)\in GL_n(L).
\]
Regard $T$ as an element in $PGL_n(L)$.
Define a $1$-cocycle $\beta$
\[
\begin{array}{cccl}
\beta: & G & \longrightarrow & PGL_n(L) \\
       & \sigma^i & \mapsto & T^i
\end{array}
\]
where $0\leq i\leq n-1$.
Let $\gamma=\delta_n(\beta)$ be the corresponding $2$-cocycle.
It follows that, for $0\leq i, j\leq n-1$,
\[
\begin{array}{cccl}
\gamma: & G\times G & \longrightarrow & L^{\times} \\
 & (\sigma^i,\sigma^j) & \mapsto &
\begin{cases}
1 &{\rm if}\quad 0\leq i+j\leq n-1, \\
b &{\rm if}\quad i+j\geq n.
\end{cases}
\end{array}
\]
Thus the function field $F_n(\gamma)$ of $V_n(\beta)$ corresponds to the cyclic algebra
$(b,L/k,\sigma)$ which is defined as
\[
(b,L/k,\sigma)=\bigoplus_{0\leq i\leq n-1}L\cdot v^i,
\ \ v^n=b, \ \ v\cdot\alpha=\sigma(\alpha)\cdot v
\]
for any $\alpha\in L$ (see Draxl \cite[page 49, Definition 4]{Dr}).

It is routine to define $F_n(\gamma$) explicitly.
In $L(v_i\ |\ 1\leq i\leq n-1)$, define $y_i=v_{i+1}/v_i$
where $v_i=x_i/x_n$ for $1\leq i\leq n-1$ and $v_n=1$.
From the definition of $T$, we find that
\[
u_{\sigma}: y_1\mapsto y_2\mapsto \cdots \mapsto y_{n-1}\mapsto 
\frac{b}{y_1\cdots y_{n-1}}\mapsto y_1.
\]
Thus $F_n(\gamma)=L(y_1,\ldots, y_{n-1})^{\langle u_{\sigma}\rangle}$.

In particular, if $\mathrm{gcd}\{\mathrm{char}\ k, n\}=1$ and $k$ contains a primitive $n$-th root of unity 
$\zeta_n\in\overline{k}$, then the cyclic extension $L$ of $k$ may be written as
$L=k(\sqrt[n]{a})$ for some $a\in k\setminus k^n$ with 
$\mathrm{Gal}(k(\sqrt[n]{a})/k)=\langle\sigma\rangle\simeq C_n$, \ $\sigma\cdot \sqrt[n]{a}=\zeta_n \sqrt[n]{a}$.
The cyclic algebra $(b,L/k,\sigma)$ becomes the power norm residue algebra $(a,b;n,k,\zeta_n)$
\[
(a,b;n,k,\zeta_n):=\bigoplus_{0\leq i,j\leq n-1} k\cdot u^iv^j, \ u^n=a, \ v^n=b, \ uv=\zeta_n vu.
\]
The class of $(a,b;n,k,\zeta_n)$ in $\mathrm{Br}(k)$ is denoted by $[a,b;n,k,\zeta_n]$
(see Draxl \cite[page 78]{Dr} for details).
Thus the fixed field $k(\sqrt[n]{a})(y_1,\ldots, y_{n-1})^{\langle u_{\sigma}\rangle}$
is $k$-rational if and only if $[a,b;n,k,\zeta_n]=0$ in $\mathrm{Br}(k)$. 
For simplicity, we just write $(a,b)_{n,k}=[a,b;n,k,\zeta_n]$ 
and call it {\it the norm residue symbol of degree $n$ over $k$}.
When $n=2$, $(a,b)_{2,k}=[a,b;n,k,-1]$ is nothing but the Hilbert symbol.
\end{example}

In summary we record the following theorem: 

\begin{theorem}\label{t2.10}
Let $n$ be a positive integer, 
$k$ be a field with $\mathrm{gcd}\{\mathrm{char}\ k, n\}$ $=$ $1$ 
and $\zeta_n \in k$ where $\zeta_n$ is a primitive $n$-th root of unity in $\overline{k}$. 
Let $(a,b)_{n,k}$ be the norm residue symbol of degree $n$ over $k$. 
Suppose that $k(\sqrt[n]{a})$ is a cyclic extension of $k$ of degree $n$ 
with $a \in k \setminus \{ 0 \}$. 
Take ${\rm Gal}(k(\sqrt[n]{a})/k)=\langle\sigma\rangle$ and
extend the action of $\sigma$ to $k(\sqrt[n]{a})(x_1,\ldots,x_{n-1})$ by
\[
\sigma: x_1\mapsto \cdots\mapsto x_{n-1}\mapsto
\frac{b}{x_1\cdots x_{n-1}}\mapsto x_1
\]
where $b\in k\setminus\{0\}$. 
Then the following statements are equivalent:\\
{\rm (i)} $k(\sqrt[n]{a})(x_1, \ldots, x_{n-1})^{\langle \sigma \rangle}$ is $k$-rational;\\
{\rm (ii)} $k(\sqrt[n]{a})(x_1, \ldots, x_{n-1})^{\langle \sigma \rangle}$ is stably $k$-rational;\\
{\rm (iii)} $k(\sqrt[n]{a})(x_1, \ldots, x_{n-1})^{\langle \sigma \rangle}$ is $k$-unirational;\\
{\rm (iv)} $(a,b)_{n,k}=0$.
\end{theorem}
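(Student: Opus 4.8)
The plan is to identify the field $k(\sqrt[n]{a})(x_1,\ldots,x_{n-1})^{\langle\sigma\rangle}$ with the Brauer field $F_n(\gamma)$ of the power norm residue algebra $(a,b;n,k,\zeta_n)$, and then to feed this into the list of equivalent conditions for Severi-Brauer varieties recorded in Section \ref{ssSB}.

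First I would put $L=k(\sqrt[n]{a})$, $\mathrm{Gal}(L/k)=\langle\sigma\rangle\simeq C_n$ with $\sigma(\sqrt[n]{a})=\zeta_n\sqrt[n]{a}$, and take the matrix $T\in GL_n(L)$, the $1$-cocycle $\beta\colon\sigma^i\mapsto T^i$ and the $2$-cocycle $\gamma=\delta_n(\beta)$ exactly as in the example above treating cyclic extensions. There it is shown that, in the coordinates $y_i=v_{i+1}/v_i$ ($1\le i\le n-1$), the automorphism $u_\sigma$ acts by $y_1\mapsto y_2\mapsto\cdots\mapsto y_{n-1}\mapsto b/(y_1\cdots y_{n-1})$ and by $\sigma$ on $L$, so that $F_n(\gamma)=L(y_1,\ldots,y_{n-1})^{\langle u_\sigma\rangle}$ is $k$-isomorphic to $k(\sqrt[n]{a})(x_1,\ldots,x_{n-1})^{\langle\sigma\rangle}$ with the action prescribed in the statement; moreover $[\gamma]$ is the class of the cyclic algebra $(b,L/k,\sigma)=(a,b;n,k,\zeta_n)$, i.e.\ $[\gamma]=(a,b)_{n,k}\in\mathrm{Br}(k)$. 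The one thing that needs care here is matching conventions: that the Galois action on the $y_i$ induced by $T$ is \emph{literally} the displayed one, and that the resulting Brauer class is $(a,b)_{n,k}$ for the chosen root of unity $\zeta_n$. This is precisely the content of that example, so it is bookkeeping rather than a real obstacle.

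With this identification in hand, the equivalence (i)$\,\Leftrightarrow\,$(iv) is immediate from the four equivalent conditions for $V_n(\gamma)$ recalled just before that example: $F_n(\gamma)$ is $k$-rational $\iff$ $V_n(\gamma)$ has a $k$-rational point $\iff$ $[\gamma]=0$ in $\mathrm{Br}(k)$. The implications (i)$\,\Rightarrow\,$(ii)$\,\Rightarrow\,$(iii) are the trivial ones noted in the introduction (``$k$-rational'' $\Rightarrow$ ``stably $k$-rational'' $\Rightarrow$ ``$k$-unirational''), since a subfield of a $k$-rational field is $k$-unirational.

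It then remains to close the cycle with (iii)$\,\Rightarrow\,$(iv). If $k$ is finite, then $\mathrm{Br}(k)=0$ by Wedderburn's theorem, so $(a,b)_{n,k}=0$ holds trivially. If $k$ is infinite and $F_n(\gamma)=k(\sqrt[n]{a})(x_1,\ldots,x_{n-1})^{\langle\sigma\rangle}$ is $k$-unirational, then $V_n(\gamma)$ receives a dominant rational map from some $\mathbb{P}^N$ over $k$; since $\mathbb{P}^N(k)$ is Zariski dense and the map is defined on a dense open set, the image meets $V_n(\gamma)(k)$, so $V_n(\gamma)$ has a $k$-rational point and hence $[\gamma]=(a,b)_{n,k}=0$ by the cited equivalence. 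This completes (i)$\,\Rightarrow\,$(ii)$\,\Rightarrow\,$(iii)$\,\Rightarrow\,$(iv)$\,\Rightarrow\,$(i). The only non-formal inputs are in this last step — the triviality of the Brauer group of a finite field and the existence of rational points on a unirational variety over an infinite field — and both are standard; accordingly I do not anticipate any serious difficulty beyond the convention-checking in the first step.
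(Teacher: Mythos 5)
Your proposal is correct and follows essentially the same route as the paper: Theorem \ref{t2.10} is recorded there precisely as a summary of the Severi--Brauer discussion, with the second example of Section \ref{ssSB} supplying the identification of the fixed field with the Brauer field $F_n(\gamma)$ of the cyclic algebra $(b,L/k,\sigma)=(a,b;n,k,\zeta_n)$, and the equivalences (i)--(iv) coming from the cited facts on Severi--Brauer varieties (Serre, Roquette, Saltman, Gille--Szamuely). Your explicit handling of the unirationality step (finite $k$ via triviality of ${\rm Br}(k)$, infinite $k$ via density of rational points on $\mathbb{P}^N$) is exactly the standard argument the paper invokes by reference to Serre.
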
 

Theorem \ref{t2.11} and Theorem \ref{t2.12} 
are applications of the foregoing discussion 
which we will use in the proof of Theorem \ref{thmain}. 

\begin{theorem}\label{t2.11} 
Let $k$ be a field with {\rm char} $k\neq 2$ and
$K=k(\sqrt{a})$ be a quadratic field extension of $k$.
Let $(a,b)_{2,k}$ be the norm residue symbol of degree $2$ over $k$. 
Take ${\rm Gal}(K/k)=\langle\sigma\rangle$ and extend the action of
$\sigma$ to $K(x_1,\ldots,x_n)$ by
\begin{align*}
\sigma: 
\sqrt{a}\mapsto -\sqrt{a},\
x_i\mapsto \frac{b_i}{x_i}\quad (1\leq i\leq n)
\end{align*}
where $b_i\in k\setminus\{0\}$. 
Then we
have $K(x_1,\ldots,x_n)^{\langle\sigma\rangle}=k(s_i,t_i:1\leq i\leq n)$
where
\begin{align}
s_i^2-at_i^2=b_i\quad (1\leq i\leq n)\label{eq00}
\end{align}
and the following statements are equivalent:\\
{\rm (i)} $K(x_1,\ldots,x_n)^{\langle\sigma\rangle}$ is $k$-rational;\\
{\rm (ii)} $K(x_1,\ldots,x_n)^{\langle\sigma\rangle}$ is stably
$k$-rational;\\
{\rm (iii)} $K(x_1,\ldots,x_n)^{\langle\sigma\rangle}$ is $k$-unirational;\\
{\rm (iv)} the variety defined by $(\ref{eq00})$
has a $k$-rational point;\\
{\rm (v)}
$(a,b_i)_{2,k}=0$ for any $1\leq i\leq n$.
\end{theorem}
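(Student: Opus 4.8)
The plan is to make the invariant field explicit, recognise it as the function field of a product of conics, and then run the equivalences around a cycle, using the one–variable situation (Theorem~\ref{t2.10} with $n=2$) and Lemma~\ref{l2.5} as the principal inputs. For the explicit description I would set $s_i=\tfrac12(x_i+\tfrac{b_i}{x_i})$ and $t_i=\tfrac{1}{2\sqrt a}(x_i-\tfrac{b_i}{x_i})$ for $1\le i\le n$; a direct check gives $\sigma(s_i)=s_i$, $\sigma(t_i)=t_i$ and $s_i^2-at_i^2=b_i$, so $k(s_i,t_i:1\le i\le n)\subseteq K(x_1,\ldots,x_n)^{\langle\sigma\rangle}$. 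Since $x_i=s_i+\sqrt a\,t_i$, we have $K(x_1,\ldots,x_n)=k(s_i,t_i:i)(\sqrt a)$, and $\sqrt a\notin k(s_i,t_i:i)$ because $\sigma$ fixes every $s_i,t_i$ yet acts nontrivially on $K$; hence $[K(x_1,\ldots,x_n):k(s_i,t_i:i)]=2=|\langle\sigma\rangle|$ and therefore $K(x_1,\ldots,x_n)^{\langle\sigma\rangle}=k(s_i,t_i:i)$. This is the function field of $V=C_1\times_k\cdots\times_k C_n$, where $C_i$ is the conic $s^2-at^2=b_i$ over $k$, and a $k$-point of $V$ is precisely a choice of $k$-point on each $C_i$.

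The implications (i)$\Rightarrow$(ii)$\Rightarrow$(iii) are formal. For (iii)$\Rightarrow$(v): for each $j$ the subfield $k(s_j,t_j)=K(x_j)^{\langle\sigma\rangle}$ of $K(x_1,\ldots,x_n)^{\langle\sigma\rangle}$ is again $k$-unirational (a subfield of a $k$-unirational field, finitely generated over $k$), so Theorem~\ref{t2.10} with $n=2$ yields $(a,b_j)_{2,k}=0$. For (iv)$\Leftrightarrow$(v): by Lemma~\ref{l2.5}(2), $(a,b_i)_{2,k}=0$ is equivalent to $X_1^2-aX_2^2-b_iX_3^2$ having a nontrivial zero over $k$; since $a\notin(k^\times)^2$ (as $[K:k]=2$), any such zero has $X_3\neq0$ and rescales to a $k$-point of the affine conic $s_i^2-at_i^2=b_i$, and conversely, so the variety~(\ref{eq00}) has a $k$-point if and only if every $(a,b_i)_{2,k}=0$. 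Finally (v)$\Rightarrow$(i): if every $(a,b_i)_{2,k}=0$ then each $C_i$ has a $k$-point, hence $C_i\cong\mathbb{P}^1_k$ and $V\cong(\mathbb{P}^1_k)^n$ is $k$-rational; concretely one parametrises each conic through a chosen rational point to obtain $k(s_i,t_i)=k(w_i)$ with $w_i$ transcendental over $k$, so $K(x_1,\ldots,x_n)^{\langle\sigma\rangle}=k(w_1,\ldots,w_n)$.

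The step I expect to be the real content is the descent (iii)$\Rightarrow$(v): the point is that a merely $k$-unirational conic must already be split, equivalently that its associated quaternion class in $\mathrm{Br}(k)$ vanishes. Here that fact is absorbed into Theorem~\ref{t2.10}, whose proof ultimately rests on the injectivity of the restriction $\mathrm{Br}(k)\hookrightarrow\mathrm{Br}(k(t_1,\ldots,t_N))$ together with the Severi--Brauer dictionary recalled in Section~\ref{ssSB} (a Severi--Brauer variety acquires a point over an extension $E$ exactly when the corresponding class dies in $\mathrm{Br}(E)$). The remaining ingredients---the explicit substitution, the degree count, and the conic parametrisation---are routine.
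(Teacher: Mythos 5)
Your proposal is correct and follows essentially the same route as the paper's proof: identify each $k(s_i,t_i)$ as the function field of the conic (Severi--Brauer variety) attached to $(a,b_i)_{2,k}$, deduce (iii)$\Rightarrow$(v) because unirationality passes to the subfields $k(s_j,t_j)$ and forces the symbols to vanish via Theorem~\ref{t2.10}, and get (v)$\Rightarrow$(i) because the full invariant field is the compositum of the now-rational conic fields. The extra details you supply (the explicit $s_i,t_i$, the degree count, and the (iv)$\Leftrightarrow$(v) check via Lemma~\ref{l2.5}) are correct and merely make explicit what the paper leaves implicit.
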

\begin{proof}
Note that $L_i=k(s_i, t_i)$ where 
$s_i^2-at_i^2=b_i$ 
is the function field of 
the Severi-Brauer variety associated 
to the quaternion algebra $(a,b_i)_{2,k}$.

Define 
$L_0:=K(x_1,\ldots,x_n)^{\langle\sigma\rangle}=k(s_i,t_i:1\leq i\leq n)$. 
If $L_0$ is $k$-unirational, so is its subfield $L_i$. 
Thus the Hilbert symbol $(a,b_i)_{2,k}=0$ by 
Theorem \ref{t2.10} (see also Serre \cite[page 161]{Se}).

On the other hand, if $(a,b_i)_{2,k}= 0$, then the field $L_i$ is $k$-rational 
by Theorem \ref{t2.10} 
(see also Lemma \ref{l2.5}, Theorem \ref{t2.6} and Serre \cite[page 161]{Se}). 
Since the field $L_0$ is the compositum of the fields 
$L_i$ for $1 \le i \le n$, we find that $L_0$ is $k$-rational also.
\end{proof}
\begin{theorem}\label{t2.12}
Let $k$ be a field with {\rm char} $k\neq 2$,
$K=k(\sqrt{a_1}, \ldots, \sqrt{a_n})$ where
$a_i\in K\setminus\{0\}$ $(1\leq i\leq n)$. 
Let $(a,b)_{2,k}$ be the norm residue symbol of degree $2$ over $k$. 
Let $\sigma_i$ $(1\leq i\leq n)$ be a $k$-automorphism of
$K(x_1, \ldots, x_n)$ defined by
\[
\sigma_i : \sqrt{a_j}\mapsto
\left\{ \begin{array}{ll}-\sqrt{a_j} & {\rm if}\quad i=j \\
\sqrt{a_j} & {\rm if}\quad i\neq j \end{array}\right.,\ 
x_j\mapsto
\left\{ \begin{array}{ll}\displaystyle{\frac{c_j}{x_j}} & {\rm if}\quad i=j \\
x_j & {\rm if}\quad i\neq j \end{array} \right.\quad(1\leq i, j \leq n)
\]
where $c_j\in k\setminus\{0\}$. 
Then we have $K(x_1,\ldots, x_n)^{\langle\sigma_1,\ldots,\sigma_n\rangle}
=k(s_i,t_i:1\leq i\leq n)$ where
\begin{align}
s_i^2-a_it_i^2=c_i\quad (1\leq i\leq n)\label{eq01}
\end{align}
and the following statements are equivalent:\\
{\rm (i)} $K(x_1,\ldots, x_n)^{\langle\sigma_1,\ldots,\sigma_n\rangle}$
is $k$-rational;\\
{\rm (ii)} $K(x_1,\ldots, x_n)^{\langle\sigma_1,\ldots,\sigma_n\rangle}$
is stably $k$-rational;\\
{\rm (iii)} $K(x_1,\ldots, x_n)^{\langle\sigma_1,\ldots,\sigma_n\rangle}$
is $k$-unirational;\\
{\rm (iv)} the variety defined by $(\ref{eq01})$
has a $k$-rational point;\\
{\rm (v)} $(a_i,c_i)_{2,k}=0$ for and $1\leq i\leq n$.
\end{theorem}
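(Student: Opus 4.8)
The plan is to follow the proof of Theorem~\ref{t2.11} almost verbatim: first compute the fixed field explicitly, then recognise it as the function field of a product of conics and apply the Severi--Brauer dictionary of Section~\ref{ssSB}. Note first that for the $\sigma_i$ to be well-defined $k$-automorphisms of $K$ one needs $\sqrt{a_1},\dots,\sqrt{a_n}$ to be linearly independent in $k^{\times}/(k^{\times})^2$, i.e.\ $[K:k]=2^n$; then $G=\langle\sigma_1\rangle\times\cdots\times\langle\sigma_n\rangle\simeq(C_2)^n$, and $\sigma_i$ acts non-trivially only on $\sqrt{a_i}$ and $x_i$.

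I would compute $K(x_1,\dots,x_n)^G$ by peeling off one involution at a time and inducting on $n$. Put $K'=k(\sqrt{a_1},\dots,\sqrt{a_{n-1}})$ and $F=K'(x_1,\dots,x_{n-1})$, so $K(x_1,\dots,x_n)=F(\sqrt{a_n})(x_n)$ with $F(\sqrt{a_n})/F$ a separable quadratic extension (since $\sqrt{a_n}\notin K'$ while $F$ is rational over $K'$). Theorem~\ref{t2.6}(1), used over the base field $F$ with $f(x_n)=c_n$, gives $K(x_1,\dots,x_n)^{\langle\sigma_n\rangle}=F(s_n,t_n)$ with $s_n^2-a_nt_n^2=c_n$, $s_n=\frac12(x_n+c_n/x_n)$, $t_n=\frac1{2\sqrt{a_n}}(x_n-c_n/x_n)$. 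The remaining involutions $\sigma_1,\dots,\sigma_{n-1}$ fix $\sqrt{a_n}$ and $x_n$, hence fix $s_n,t_n$, and they commute with $\sigma_n$; so $G/\langle\sigma_n\rangle$ acts on $F(s_n,t_n)=K'(x_1,\dots,x_{n-1})(s_n,t_n)$ only through $K'(x_1,\dots,x_{n-1})$, by exactly the action of the theorem in dimension $n-1$, and since the relation defining $s_n,t_n$ has coefficients in $k$ the invariant coordinates split off: $K(x_1,\dots,x_n)^G=\bigl(K'(x_1,\dots,x_{n-1})^{\langle\sigma_1,\dots,\sigma_{n-1}\rangle}\bigr)(s_n,t_n)$. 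With base case $n=1$ (Theorem~\ref{t2.6}(1)), the induction gives $K(x_1,\dots,x_n)^G=k(s_i,t_i:1\le i\le n)$ with the relations $(\ref{eq01})$; a transcendence degree count (both sides have degree $n$ over $k$, as $K/k$ is algebraic) shows these are the only relations, so this field is the function field of $C_1\times_k\cdots\times_k C_n$, where $C_i\colon s_i^2-a_it_i^2=c_i$ is the conic, i.e.\ the Severi--Brauer variety, attached to the quaternion algebra $(a_i,c_i)_{2,k}$.

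The equivalences then go exactly as for Theorem~\ref{t2.11}. Trivially (i)$\Rightarrow$(ii)$\Rightarrow$(iii). For (iii)$\Rightarrow$(v): each $L_i:=k(s_i,t_i)$ is a subfield of $L_0:=K(x_1,\dots,x_n)^G$, hence is $k$-unirational whenever $L_0$ is (directly from the definition of $k$-unirationality); as $L_i$ is the function field of the conic $C_i$, Theorem~\ref{t2.10} applied with $n=2$ (equivalently Theorem~\ref{t2.11} in the case $n=1$) forces $(a_i,c_i)_{2,k}=0$. For (v)$\Rightarrow$(iv): when $(a_i,c_i)_{2,k}=0$ the conic $C_i$ has a $k$-rational point by Lemma~\ref{l2.5}, and collecting these yields a $k$-point of $C_1\times\cdots\times C_n$, i.e.\ of the variety $(\ref{eq01})$; conversely (iv)$\Rightarrow$(v) is immediate, a $k$-point of the product projecting to $k$-points of the factors and Lemma~\ref{l2.5} again giving $(a_i,c_i)_{2,k}=0$. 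Finally (v)$\Rightarrow$(i): with all $(a_i,c_i)_{2,k}=0$ each $C_i$ is $k$-isomorphic to $\bP^1$, so $L_i=k(\theta_i)$ for a transcendental $\theta_i$ and $L_0$, being the function field of $(\bP^1)^n$, equals $k(\theta_1,\dots,\theta_n)$ and is $k$-rational. This closes the cycle (i)$\Rightarrow$(ii)$\Rightarrow$(iii)$\Rightarrow$(v)$\Rightarrow$(i) and gives (iv)$\Leftrightarrow$(v) separately.

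I expect the only genuine work to be the bookkeeping in the fixed-field computation: verifying at each stage that the next square root still generates a quadratic extension of the current rational function field, and that the pairwise-commuting involutions allow the already-computed invariant coordinates to be split off in order. Once that is in hand the argument is purely formal, being essentially a transcription of the proof of Theorem~\ref{t2.11}; the only new feature is that here the $n$ quadratic twists are carried by $n$ independent square roots matched one-to-one with the variables $x_i$, rather than by one common square root.
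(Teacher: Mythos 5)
Your proposal is correct and follows essentially the same route as the paper: identify the fixed field as the compositum of the conic function fields $k(s_i,t_i)$ with $s_i^2-a_it_i^2=c_i$, and then run the Severi--Brauer/norm-residue dictionary of Theorem \ref{t2.10} exactly as in the proof of Theorem \ref{t2.11}. The only difference is that you justify the compositum description of the fixed field by an explicit induction on $n$ (peeling off one involution at a time via Theorem \ref{t2.6}), where the paper simply asserts it.
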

\begin{proof}
Note that $L_i=k(\sqrt{a_i})(x_i)^{\langle\sigma_i\rangle}
=k(s_i, t_i)$ where $s_i^2-a_it_i^2=c_i$ 
is the function field of  
the Severi-Brauer variety associated 
to the quaternion algebra $(a_i,c_i)_{2,k}$ 
and $K(x_1,\ldots, x_n)^{\langle\sigma_1,\ldots,\sigma_n\rangle}$ 
is the compositum of the fields $L_i$ for $1 \le i \le n$. 
If $K(x_1,\ldots, x_n)^{\langle\sigma_1,\ldots,\sigma_n\rangle}$ is 
$k$-unirational, so is $L_i$ for any $1 \le i \le n$. 
Thus $(a_i,c_i)_{2,k}=0$ by Theorem \ref{t2.10} 
(see also Serre \cite[page 161]{Se}). 

On the other hand, if $(a_i,c_i)_{2,k}= 0$, 
then the field $L_i$ is $k$-rational 
by Theorem \ref{t2.10} (see also Lemma \ref{l2.5}, Theorem \ref{t2.6} 
and Serre \cite[page 161]{Se}). 
Since the field 
$K(x_1,\ldots, x_n)^{\langle\sigma_1,\ldots,\sigma_n\rangle}$ 
is the compositum of the fields $L_i$, 
we find that it is $k$-rational. 
\end{proof}

%
\section{Proof of Theorem \ref{thmain}}  \label{Sec:Proof}
%

In this section, we will prove Theorem \ref{thmain} 
by case-by-case analysis. 
Recall that $H=\{ \sigma \in G \ |\ \sigma(\alpha)=\alpha
\ {\rm for} \ {\rm any}\ \alpha \in K \}$ 
and we assume that $G\neq H$ (cf. Theorem \ref{thHaj8387}). 

\subsection{The case of $G=C_2^{(1)}=\langle -I \rangle$}\label{ss31}

We have $H=\{1\}$. 
In this case, $K/k$ is a quadratic extension and then $K=k(\sqrt{a})$ for some $a\in k$.
The group $G=\langle-I\rangle$ acts on $k(\sqrt{a})(x,y)$ by
\[
-I : \sqrt{a}\mapsto -\sqrt{a}, \ x\mapsto \frac{b}{x}, \
y\mapsto \frac{c}{y}
\ \ \ \ (b,c \in k\setminus\{0\}).
\]
It follows from Theorem \ref{t2.11} that
$K(x,y)^G$ is $k$-rational if and only if 
$K(x,y)^G$ is $k$-unirational if and only if 
$(a,b)_{2,k}=0$ and $(a,c)_{2,k}=0$.

\subsection{The case of $G=C_2^{(2)}=\langle \lambda \rangle$}\label{ss32}

We have $H=\{1\}$ and $K=k(\sqrt{a})$ for some $a\in k$.
The action of $\lambda$ on $K(x,y)$ is given by
\[
\lambda : \sqrt{a}\mapsto -\sqrt{a}, \ x\mapsto \ep x, \
y\mapsto \frac{b}{y}\quad (\ep=\pm 1, \ b\in k\setminus\{0\}).
\]
By replacing $\sqrt{a}x$ by $x$ if $\ep=-1$,
we may assume that $\ep=1$.
Then it follows from Theorem \ref{t2.6} (Lemma \ref{l2.5}) that
$K(x,y)^G=k(\sqrt{a},x,y)^{\langle \lambda \rangle}$ is $k$-rational 
if and only if $K(x,y)^G$ is $k$-unirational if and only if $(a,b)_{2,k}=0$.

\subsection{The case of $G=C_2^{(3)}=\langle \tau \rangle$}\label{ss33}

We have $H=\{1\}$ and $K=k(\sqrt{a})$ for some $a\in k$.
The action of $\tau$ on $K(x,y)$ is given by
\[
\tau : \sqrt{a}\mapsto -\sqrt{a}, \ x\mapsto by, \ y\mapsto \frac{x}{b}
\quad (b\in k\setminus\{0\}).
\]
By replacing $by$ by $y$,
we may assume that $b=1$.
Hence the problem is reduced to the purely quasi-monomial case,
which has been settled in Theorem \ref{thHKY14}.
Hence $K(x,y)^G$ is $k$-rational.

\subsection{The case of $G=C_3=\langle \rho^2 \rangle$}\label{ss34}

We have $H=\{1\}$. 
The action of $\rho^2$ on $K(x,y)$ is given by
\[
\rho^2 : \ x\mapsto by, \ y\mapsto \frac{c}{xy}
\quad (b,c\in k\setminus\{0\}).
\]
By replacing $by$ by $y$ and $b^2c$ by $c$, we may assume that $b=1$.\\

\noindent 
\textbf{Case 1-1:} $H=\{1\}$ and $\omega\in k$. 
We assume that $\omega\in k$ where 
$\omega$ is a primitive cubic root of unity in $\overline{k}$. 
In this case, $[K:k]=3$ and 
$K=k(\sqrt[3]{a})$ for some $a\in k$. 
It follows from Theorem \ref{t2.10} that 
$K(x,y)^G$ is $k$-rational if and only if 
$K(x,y)^G$ is $k$-unirational if and only if $(a,c)_{3,k}=0$.\\

\noindent 
\textbf{Case 1-2:} $H=\{1\}$ and $\omega\not\in k$, i.e. $\omega\not\in K$. 
We take $L=K(\omega)=k(\omega,\sqrt[3]{\alpha})$ 
for some $\alpha\in k(\omega)$
and ${\rm Gal}(L/K)=\langle\varphi_\omega\rangle$ where 
$\varphi_\omega : \omega\mapsto\omega^{-1}$. 
We extend the actions of $\rho^2$, 
$\varphi_\omega$ to $L(x,y)$ in the natural way. 
Then ${\rm Gal}(L/k)=\langle\rho^2,\varphi_\omega\rangle\simeq C_3\times C_2\simeq C_6$ 
and we find that $L(x,y)^{\langle\rho^2,\varphi_\omega\rangle}
=L^{\langle\varphi_\omega\rangle}(x,y)^G=K(x,y)^G$.\\

(i) Suppose that $(\alpha,c)_{3,k(\omega)}=0$ where $\alpha\in k(\omega)$ 
which satisfies $L=K(\omega)=k(\omega,\sqrt[3]{\alpha})$. 
Then there exists 
$\gamma_0\in k(\omega,\sqrt[3]{\alpha})$ such that 
$c=
N_{k(\omega,\sqrt[3]{\alpha})/k(\omega)}(\gamma_0)=\gamma_0\gamma_1\gamma_2$ 
where $\gamma_1=\rho^2(\gamma_0)$, $\gamma_2=\rho^2(\gamma_1)$. 
Define 
\begin{align*}
X:=\frac{x}{\gamma_0},\quad 
Y:=\frac{y}{\gamma_1}.
\end{align*}
Then $L(x,y)=L(X,Y)$ and the actions of $\rho^2$ and $\varphi_\omega$ on 
$L(X,Y)=k(\omega,\sqrt[3]{\alpha})(X,Y)$ are given by
\begin{align*}
\rho^2 &: \sqrt[3]{\alpha}\mapsto \omega\,\sqrt[3]{\alpha},
\ \sqrt[3]{\beta}\mapsto\omega^{-1}\sqrt[3]{\beta},\ \omega\mapsto\omega,
\ X\mapsto Y, \ Y\mapsto\frac{1}{XY},\\
\varphi_\omega &: \sqrt[3]{\alpha}\mapsto\sqrt[3]{\beta},
\ \sqrt[3]{\beta}\mapsto\sqrt[3]{\alpha},\ \omega\mapsto\omega^{-1},
\ X\mapsto X, \ Y\mapsto Y
\end{align*}
for some $\beta\in k(\omega)$ which satisfies  
$K(\omega)=k(\omega,\sqrt[3]{\alpha})=k(\omega,\sqrt[3]{\beta})$. 
Define
\begin{align*}
u:=\frac{1+\omega^{-1}X+\omega XY}{1+X+XY},\quad
v:=\frac{1+\omega X+\omega^{-1}XY}{1+X+XY}.
\end{align*}
Then it follows from Lemma \ref{l2.4} that
$L(X,Y)=L(u,v)$ and
\begin{align*}
\rho^2 &: \sqrt[3]{\alpha}\mapsto \omega\,\sqrt[3]{\alpha},
\ \sqrt[3]{\beta}\mapsto\omega^{-1}\sqrt[3]{\beta},\ \omega\mapsto\omega,
\ u\mapsto \omega u,\ v\mapsto \omega^{-1}v,\\
\varphi_\omega &: \sqrt[3]{\alpha}\mapsto\sqrt[3]{\beta},
\ \sqrt[3]{\beta}\mapsto\sqrt[3]{\alpha},\ \omega\mapsto\omega^{-1},
\ u\mapsto v, \ v\mapsto u.
\end{align*}
Define
\begin{align*}
s:=\frac{u}{\sqrt[3]{\alpha}},\quad 
t:=\frac{v}{\sqrt[3]{\beta}}. 
\end{align*}
Then we have $L(u,v)=L(s,t)$ and
\begin{align*}
\rho^2 &: \sqrt[3]{\alpha}\mapsto \omega\,\sqrt[3]{\alpha},
\ \sqrt[3]{\beta}\mapsto\omega^{-1}\sqrt[3]{\beta},\ \omega\mapsto\omega,
\ s\mapsto s,\ t\mapsto t,\\
\varphi_\omega &: \sqrt[3]{\alpha}\mapsto\sqrt[3]{\beta},
\ \sqrt[3]{\beta}\mapsto\sqrt[3]{\alpha},\ \omega\mapsto\omega^{-1},
\ s\mapsto t, \ t\mapsto s.
\end{align*}
Hence $K(x,y)^G=L(s,t)^{\langle\rho^2,\varphi_\omega\rangle}
=(L^{\langle\rho^2\rangle}(s,t))^{\langle\varphi_\omega\rangle}=k(\omega)(s,t)^{\langle\varphi_\omega\rangle}
=k(s+t,(\omega-\omega^{-1})(s-t))$ is $k$-rational.\\ 

{\rm (ii)} Suppose that $(\alpha,c)_{3,k(\omega)}\neq 0$ where $\alpha\in k(\omega)$ 
which satisfies that $K(\omega)=k(\omega,\sqrt[3]{\alpha})$. 
If we take a base field $k(\omega)$ instead of $k$, then 
by Case 1-1, 
$K(\omega)(x,y)^G$ is not $k(\omega)$-unirational. 
Hence $K(x,y)^G$ is not $k$-unirational. 

\begin{remark}\label{r3.1}
One of the referees pointed out that 
for the case where $G=C_3$, 
it corresponds to Severi-Brauer surfaces 
as in Example \ref{ex2.8} and Example \ref{ex2.9} 
and it is well known that 
if $F$ is a quadratic extension of $k$, then 
a Severi-Brauer surface $S$ over $k$ is trivial, 
i.e. isomorphic to $\bP^2_k$, 
if and only if $S_F=S\otimes_k F$ over $F$ is trivial 
(see also Roquette \cite{Ro}). 
Thus $K(x,y)^G$ is $k$-rational if and only if 
$K(\omega)(x,y)^G$ is $k(\omega)$-rational if and only if 
$(\alpha,c)_{3,k(\omega)}=0$ where $\alpha\in k(\omega)$ 
is an element such that $\sqrt[3]{\alpha}\in K(\omega)$. 
\end{remark}

\subsection{The case of $G=C_4=\langle \sigma \rangle$}\label{ss35}

The action of $\sigma$ on $K(x,y)$ is given by
\[
x\mapsto by, \ y\mapsto \frac{c}{x}\quad (b,c\in k\setminus\{0\}).
\]
By replacing $by$ by $y$ and $bc$ by $c$, we may assume that $b=1$. 

We will treat the problem
for each of the proper normal subgroups
$H=\{1\}$, $\langle \sigma^2 \rangle=\langle -I\rangle$ of $G$.\\

\noindent
\textbf{Case 1:} $H=\{1\}$.
In this case, $[K:k]=4$ and we can take suitable $\alpha, \beta \in K$ 
which satisfy $K=k(\alpha,\beta)$. 
Then the action of $\sigma$ on $K(x,y)$ is given by
\[
\sigma : \alpha\mapsto \beta,\ \beta\mapsto -\alpha,
\ x\mapsto y, \ y\mapsto \frac{c}{x}\quad (c\in k\setminus\{0\}).
\]
We consider the field 
$F=K^{\langle\sigma^2\rangle}=k(\alpha^2)$ with $[F:k]=2$. 
%
We find that $K(x,y)^G\otimes_k F=(K(x,y)^G)F=K(x,y)^{\langle\sigma^2\rangle}$ 
because $F$ $\cap$ $K(x,y)^G=k$ and $[K(x,y)^{\langle\sigma^2\rangle}:K(x,y)^G]=2$.\\ 

(i) Suppose $(\alpha^2,c)_{2,k(\alpha^2)}=0$. 
Then there exists 
$\gamma_0\in K=k(\alpha,\beta)=k(\alpha)$ such that 
$c=
N_{k(\alpha)/k(\alpha^2)}(\gamma_0)=\gamma_0\gamma_1$ 
where $\gamma_1=\sigma^2(\gamma_0)=-I(\gamma_0)$. 
Define 
\begin{align*}
X:=\frac{x}{\gamma_0},\quad 
Y:=\frac{y}{\sigma(\gamma_0)}.
\end{align*}
Then $K(x,y)=K(X,Y)$ and the actions of 
$\sigma$ and $\sigma^2$ on $K(X,Y)$ are given by 
\begin{align*}
\sigma : \alpha\mapsto \beta,\ \beta\mapsto -\alpha,
\ X\mapsto Y, \ Y\mapsto \frac{1}{X},\\ 
\sigma^2 : \alpha\mapsto -\alpha,\ \beta\mapsto -\beta,
\ X\mapsto \frac{1}{X}, \ Y\mapsto \frac{1}{Y}.
\end{align*}
Hence, by Voskresenskii's theorem (Theorem \ref{thVos67}),  
$K(x,y)^G=K(X,Y)^{\langle\sigma\rangle}$ is $k$-rational. 

Indeed, we define
\[
u:=\alpha\,\frac{X+1}{X-1},\quad 
v:=\beta\,\frac{Y+1}{Y-1}. 
\]
Then the actions of $\sigma$ and $\sigma^2$ 
on $K(x,y)=k(\alpha,\beta)(u,v)$ are given by 
\begin{align*}
\sigma &: \alpha\mapsto \beta,\ \beta\mapsto -\alpha, 
\ u\mapsto v, \ v\mapsto u, \\
\sigma^2 &: \alpha\mapsto -\alpha,\ \beta\mapsto -\beta,
\ u\mapsto u,\ v\mapsto v.
\end{align*}
We have $K(x,y)^{\langle\sigma^2\rangle}=k(\alpha,\beta)(u,v)^{\langle\sigma^2\rangle}=k(A,B)(u,v)$ 
where $A=\alpha\beta$, $B=\alpha/\beta$. 
The action of $\sigma$ on $k(A,B)(u,v)$ is given by 
\begin{align*}
\sigma: A\mapsto -A, \ B\mapsto -\frac{1}{B}, \ u\mapsto v, \ v\mapsto u.
\end{align*}
Define 
\begin{align*}
a&:=A\left(B+\frac{1}{B}\right)=\alpha^2+\beta^2\in k,\ 
b:=B-\frac{1}{B}=\frac{\alpha^2-\beta^2}{\alpha\beta}\in k,\\ 
U&:=u+v, \ V:=A(u-v). 
\end{align*}
Then $a, b\in K^{\langle\sigma\rangle}=k$ and 
$K(x,y)^G=k(A,B)(u,v)^{\langle\sigma\rangle}=k(a,b)(U,V)=k(U,V)$. 
Hence $K(x,y)^G$ is $k$-rational.\\ 

(ii) Suppose $(\alpha^2,c)_{2,k(\alpha^2)}\neq 0$. 
We have $K(x,y)^G\otimes_k F=K(x,y)^{\langle\sigma^2\rangle}$ 
and the action of $\sigma^2$ on $K(x,y)=k(\alpha,\beta)(x,y)=k(\alpha)(x,y)$ is given by 
\begin{align*}
\sigma^2 &: \alpha\mapsto -\alpha, 
\ x\mapsto \frac{c}{x},\ y\mapsto \frac{c}{y}.
\end{align*}
By Section \ref{ss31}, $K(x,y)^G\otimes_k F$ is not $F$-unirational 
where $F=K^{\langle\sigma^2\rangle}=k(\alpha^2)$ with $[F:k]=2$. 
Hence $K(x,y)^G$ is not $k$-unirational.

\begin{remark}\label{r3.2}
One of the referees told that 
the case $G=C_4$ with $H=\{1\}$ corresponds to 
a del Pezzo surface $S$ of degree $8$ and it is well-known that 
$S$ is $k$-rational if and only if $S_F=S\otimes_k F$ is $F$-rational 
where $F=K^{\langle\sigma^2\rangle}$ with $[F:k]=2$. 
It is also interesting to consider a more geometric proof 
and compare it with recent related papers 
Shramov and Vologodsky \cite[Section 7]{SV} and Trepalin \cite{Tr2}. 
\end{remark}

\noindent
\textbf{Case 2:} $H=\langle \sigma^2 \rangle=\langle -I\rangle$.
In this case, $[K:k]=2$ and $K=k(\sqrt{a})$ for some $a\in k$.
The action of $\sigma$ on $K(x,y)$ is given by
\[
\sigma : \sqrt{a}\mapsto -\sqrt{a}, \ x\mapsto y, \ y\mapsto
\frac{c}{x}\quad (c\in k\setminus\{0\}).
\]
By Lemma \ref{l2.1}, we have $k(\sqrt{a},x,y)^{\langle \sigma^2 \rangle}=k(\sqrt{a},u,v)$ where
\[
u=\frac{xy+c}{x+y},\quad v=\frac{xy-c}{x-y}.
\]
The action of $\sigma$ on $k(\sqrt{a},u,v)$ is given by
\[
\sigma: \ \sqrt{a}\mapsto -\sqrt{a}, \ u\mapsto \frac{c}{u}, \ v\mapsto -\frac{c}{v}.
\]
Hence it follows from Theorem \ref{t2.11} that 
$k(\sqrt{a},x,y)^G$ is $k$-rational if and only if 
$k(\sqrt{a},x,y)^G$ is $k$-unirational if and only if 
$(a,c)_{2,k}=0$ and $(a,-c)_{2,k}=0$.

\subsection{The case of $G=C_6=\langle \rho \rangle$}\label{ss36}

We have $K=k(\alpha)$ for some $\alpha\in K$.
The action of $\rho$ on $K(x,y)$ is given by
\[
\rho : \alpha \mapsto \alpha^\prime, \ x\mapsto bxy, \ y\mapsto \frac{c}{x}
\quad (b,c\in k\setminus\{0\}).
\]
By replacing $x/bc$ by $x$ and $by$ by $y$,
we may assume that $b=c=1$.
Therefore the problem is reduced to the purely quasi-monomial case.
It follows from Theorem \ref{thHKY14}
that $K(x,y)^G$ is $k$-rational for each of the proper normal subgroups
$H=\{1\}$, $\langle\rho^3\rangle=\langle -I\rangle$, $\langle\rho^2\rangle$ of $G$.

\subsection{The case of $G=V_4^{(1)}=\langle \lambda, -I \rangle=\langle\lambda,-\lambda\rangle$}\label{ss37}

By the equality 
$\lambda(-I)=(-I)\lambda$,
we see that the action of $G$ on $K(x,y)$ is given by
\[
\lambda : x\mapsto \ep_1x, \ y\mapsto \frac{\ep_2d}{y},\quad 
-I : x\mapsto \frac{c}{x}, \ y\mapsto \frac{d}{y}\quad
(c,d\in k\setminus\{0\},\ \ep_1, \ep_2=\pm 1).
\]

We will treat the problem
for each of the proper normal subgroups
$H=\{1\}$, $\langle -I \rangle$, $\langle \lambda \rangle$, 
$\langle -\lambda \rangle$ of $G$.\\

\noindent
\textbf{Case 1:} $H=\{1\}$.
In this case, $[K:k]=4$ and $K=k(\sqrt{a},\sqrt{b})$ for some $a,b\in k$.
Then the actions of $\lambda$ and $-\lambda$ on $k(\sqrt{a},\sqrt{b})(x,y)$ are given by 
\begin{align*}
\lambda &: \sqrt{a}\mapsto -\sqrt{a}, \ \sqrt{b}\mapsto \sqrt{b}, \
x\mapsto \ep_1x, \ y\mapsto \frac{\ep_2d}{y}, \\
-\lambda &: \sqrt{a}\mapsto \sqrt{a}, \ \sqrt{b}\mapsto -\sqrt{b}, \
x\mapsto \frac{\ep_1 c}{x}, \ y\mapsto \ep_2 y.
\end{align*}
By replacing $\sqrt{a}x$ by $x$ and $-ac$ by $c$ if $\ep_1=-1$
and $\sqrt{b}y$ by $y$ and $-bd$ by $d$ if $\ep_2=-1$,
we may assume that $\ep_1=\ep_2=1$.
Then we have
\begin{align*}
\lambda &: \sqrt{a}\mapsto -\sqrt{a}, \ \sqrt{b}\mapsto \sqrt{b}, \ x\mapsto x, \ y\mapsto \dfrac{d}{y},\\
-\lambda &: \sqrt{a}\mapsto \sqrt{a}, \ \sqrt{b}\mapsto -\sqrt{b}, \ x\mapsto \dfrac{c}{x}, \ y\mapsto y.
\end{align*}
It follows from Theorem \ref{t2.12} that
$K(x,y)^G=K(x,y)^{\langle\lambda,-\lambda\rangle}$ 
is $k$-rational if and only if 
$K(x,y)^G$ is $k$-unirational if and only if 
$(a,d)_{2,k}=0$ and $(b,c)_{2,k}=0$.\\

\noindent
\textbf{Case 2:} $H=\langle -I \rangle$.
In this case, $[K:k]=2$ and $K=k(\sqrt{a})$ for some $a\in k$.
Then the actions of $\lambda$ and $-I$ on $k(\sqrt{a})(x,y)$ are given by 
\begin{align*}
\lambda : \sqrt{a}\mapsto -\sqrt{a}, \ x\mapsto \ep_1x, \ y\mapsto \frac{\ep_2d}{y},\quad 
-I : \sqrt{a}\mapsto \sqrt{a}, \ x\mapsto \frac{c}{x}, \ y\mapsto \frac{d}{y}.
\end{align*}
By replacing $\sqrt{a}x$ by $x$ and $ac$ by $c$ if $\ep_1=-1$
and $\sqrt{a}y$ by $y$ and $ad$ by $d$ if $\ep_2=-1$,
we may assume that $\ep_1=\ep_2=1$.
By Lemma \ref{l2.2}, we have $k(\sqrt{a},x,y)^{\langle -I \rangle}=k(\sqrt{a},u,v)$ where
\[
u=\frac{y(x^2-c)}{x^2y^2-cd},\quad 
v=\frac{x(y^2-d)}{x^2y^2-cd}
\]
and the action of $\lambda$ on $k(\sqrt{a},u,v)$ is given by 
\[ \lambda: \ \sqrt{a}\mapsto -\sqrt{a},\ u\mapsto \frac{u}{du^2-cv^2},\ v\mapsto -\frac{v}{du^2-cv^2}. \]
Define $U:=\sqrt{a}u/v$, $V:=(du^2-cv^2)/v$. 
We find that 
$u=\sqrt{a}UV/(dU^2-ac)$, $v=aV/(dU^2-ac)$. 
Then we have 
$k(\sqrt{a},u,v)=k(\sqrt{a},U,V)$ and 
\[ \lambda: \ \sqrt{a}\mapsto -\sqrt{a}, \ U\mapsto U, \ V\mapsto \frac{-(d/a)(U^2-ac/d)}{V}. \]
It follows from Theorem \ref{t2.6} that
$K(x,y)^G$ is $k$-rational if and only if 
$K(x,y)^G$ is $k$-unirational if and only if 
$(a,-d/a)_{2,k(\sqrt{cd})}=0$ if and only if 
$(a,d)_{2,k(\sqrt{cd})}=0$ 
because $(a,-d/a)_{2,k(\sqrt{cd})}=(a,d)_{2,k(\sqrt{cd})}+(a,-1/a)_{2,k(\sqrt{cd})}$ 
and $(a,-1/a)_{2,k(\sqrt{cd})}=(a,-a)_{2,k(\sqrt{cd})}=0$ (see Lemma \ref{l2.5}).\\

\noindent
\textbf{Case 3:} $H=\langle \lambda \rangle$.
We have $K=k(\sqrt{a})$ for some $a\in k$.
Then the actions of $\lambda$ and $-I$ on $k(\sqrt{a})(x,y)$ are given by 
\begin{align*}
\lambda : \sqrt{a}\mapsto \sqrt{a}, \ x\mapsto \ep_1x, \ y\mapsto \frac{d}{y},\quad 
-I : \sqrt{a}\mapsto -\sqrt{a}, \ x\mapsto \frac{c}{x}, \ y\mapsto \frac{\ep_2d}{y}.
\end{align*}
By replacing $\sqrt{a}y$ by $y$ and $ad$ by $d$, we may assume that $\ep_2=1$.\\

\noindent
\textbf{Case 3-1:} $H=\langle \lambda \rangle$ and $\ep_1=1$.
We have $k(\sqrt{a},x,y)^{\langle\lambda\rangle}=k(\sqrt{a},x,Y)$ where $Y=y+\frac{d}{y}$,
and the action of $-I$ on $k(\sqrt{a},x,Y)$ is given by 
\[
-I: \ \sqrt{a}\mapsto -\sqrt{a}, \ x\mapsto \frac{c}{x}, \ Y\mapsto Y.
\]
It follows from Theorem \ref{t2.6} (Lemma \ref{l2.5}) that
$K(x,y)^G$ is $k$-rational if and only if 
$K(x,y)^G$ is $k$-unirational if and only if 
$(a,c)_{2,k}=0$.\\

\noindent
\textbf{Case 3-2:} $H=\langle \lambda \rangle$ and $\ep_1=-1$.
We have $k(\sqrt{a},x,y)^{\langle\lambda\rangle}=k(\sqrt{a},u,v)$ where 
$u=\frac{1}{2}(y+\frac{d}{y})$, $v=\frac{1}{2x}(y-\frac{d}{y})$ and the action of $-I$ on $k(\sqrt{a},u,v)$ is given by 
\[
-I: \ \sqrt{a}\mapsto -\sqrt{a}, \ u\mapsto u,\ v\mapsto \frac{-(1/c)(u^2-d)}{v}.
\]
It follows from Theorem \ref{t2.6} that
$K(x,y)^G$ is $k$-rational if and only if 
$K(x,y)^G$ is $k$-unirational if and only if 
$(a,-c)_{2,k(\sqrt{ad})}=0$.\\

\noindent
\textbf{Case 4:} $H=\langle -\lambda \rangle$.
We have $K=k(\sqrt{a})$ for some $a\in k$.
Then the actions of $-\lambda$ and $-I$ on $k(\sqrt{a})(x,y)$ are given by 
\begin{align*}
-\lambda : \sqrt{a}\mapsto \sqrt{a}, \ x\mapsto \frac{c}{x}, \ y\mapsto \ep_2 y,\quad 
-I : \sqrt{a}\mapsto -\sqrt{a}, \ x\mapsto \frac{\ep_1c}{x}, \ y\mapsto \frac{d}{y}.
\end{align*}
After changing $x\leftrightarrow y$, $c\leftrightarrow d$, 
$\ep_1\leftrightarrow \ep_2$, this action becomes the same 
as in Case 3. 
In particular, we may assume that $\ep_1=1$. 
By the result of Case 3, we get: 

When $\ep_2=1$, 
$K(x,y)^G$ is $k$-rational if and only if 
$K(x,y)^G$ is $k$-unirational if and only if 
$(a,d)_{2,k}=0$. 
When $\ep_2=-1$, 
$K(x,y)^G$ is $k$-rational if and only if 
$K(x,y)^G$ is $k$-unirational if and only if 
$(a,-d)_{2,k(\sqrt{ac})}=0$.

\subsection{The case of $G=V_4^{(2)}=\langle \tau, -I \rangle$}\label{ss38}

The action of $G$ on $K(x,y)$ is given by
\[
\tau : x\mapsto by, \ y\mapsto \frac{x}{b},\quad
-I : x\mapsto \frac{c}{x}, \ y\mapsto \frac{d}{y}\quad
(c,d\in k\setminus\{0\}). 
\]
By replacing $by$ by $y$, we may assume that $b=1$.
By the equalities $\tau^2=I$ and
$\tau(-I)=(-I)\tau$,
we see that the action of $G$ on $K(x,y)$ is given by
\[
\tau : \ x\mapsto y, \ y\mapsto x,\quad
-I : x\mapsto \frac{c}{x}, \ y\mapsto \frac{c}{y}\quad
(c\in k\setminus\{0\}).
\]

We will treat the problem
for each of the proper normal subgroups 
$H=\{1\}$, $\langle -I \rangle$, $\langle \tau \rangle$, 
$\langle -\tau \rangle$ of $G$.\\

\noindent
\textbf{Case 1:} $H=\{1\}$.
We have $K=k(\sqrt{a},\sqrt{b})$ for some $a,b\in k$.
Then the group $G$ acts on $K(x,y)$ by
\begin{align*}
\tau : \sqrt{a}\mapsto -\sqrt{a}, \ \sqrt{b}\mapsto \sqrt{b}, \ x\mapsto y, \ y\mapsto x,\quad 
-I : \sqrt{a}\mapsto \sqrt{a}, \ \sqrt{b}\mapsto -\sqrt{b}, \ x\mapsto \frac{c}{x}, \ y\mapsto \frac{c}{y}. 
\end{align*}
We have $K(x,y)^{\langle \tau \rangle}=k(\sqrt{b})(u,v)$ where
\[
u=\frac{1}{2}(x+y),\quad v=\frac{1}{2\sqrt{a}}(x-y)
\]
and the action of $-I$ on $k(\sqrt{b})(u,v)$ is given by
\[
-I : \sqrt{b}\mapsto -\sqrt{b}, \ u\mapsto \frac{cu}{u^2-av^2}, \ v\mapsto -\frac{cv}{u^2-av^2}.
\]
Define $U:=\sqrt{b}u/v$, $V:=(u^2-av^2)/v$. 
We find that  
$u=\sqrt{b}UV/(U^2-ab)$, $v=bV/(U^2-ab)$. 
Then we have 
$k(\sqrt{b})(u,v)=k(\sqrt{b})(U,V)$ and 
\[
-I : \sqrt{b}\mapsto -\sqrt{b}, \ U\mapsto U, \ V\mapsto \frac{-(c/b)(U^2-ab)}{V}.
\]
It follows from Theorem \ref{t2.6} that
$K(x,y)^G$ is $k$-rational if and only if 
$K(x,y)^G$ is $k$-unirational if and only if 
$(b,-c/b)_{2,k(\sqrt{a})}=0$ if and only if 
$(b,c)_{2,k(\sqrt{a})}=0$ 
because $(b,-c/b)_{2,k(\sqrt{a})}=(b,c)_{2,k(\sqrt{a})}+(b,-1/b)_{2,k(\sqrt{a})}$ 
and $(b,-1/b)_{2,k(\sqrt{a})}=(b,-b)_{2,k(\sqrt{a})}=0$ (see Lemma \ref{l2.5}).\\

\noindent
\textbf{Case 2:} $H=\langle -I \rangle$.
We have $K=k(\sqrt{a})$ for some $a\in k$.
Then the group $G$ acts on $K(x,y)$ by
\begin{align*}
\tau : \sqrt{a}\mapsto -\sqrt{a}, \ x\mapsto y, \ y\mapsto x,\quad 
-I : \sqrt{a}\mapsto \sqrt{a}, \ x\mapsto \frac{c}{x}, \ y\mapsto \frac{c}{y}.
\end{align*}
By Lemma \ref{l2.1},
we have $K(x,y)^{\langle -I \rangle}=K(X,Y)$ where
\[
X=\frac{xy+c}{x+y},\quad Y=\frac{xy-c}{x-y}
\]
and the action of $\tau$ on $K(X,Y)$ is given by
\begin{align*}
\tau : \sqrt{a}\mapsto -\sqrt{a}, \ X\mapsto X, \ Y\mapsto -Y.
\end{align*}
Hence $K(x,y)^G=k(X,\sqrt{a}Y)$ is $k$-rational.\\

\noindent
\textbf{Case 3:} $H=\langle \tau \rangle$.
We have $K=k(\sqrt{a})$ for some $a\in k$.
Then the group $G$ acts on $K(x,y)$ by
\begin{align*}
\tau : \sqrt{a}\mapsto \sqrt{a}, \ x\mapsto y, \ y\mapsto x,\quad 
-I : \sqrt{a}\mapsto -\sqrt{a}, \ x\mapsto \frac{c}{x}, \ y\mapsto \frac{c}{y}.
\end{align*}
We have $K(x,y)^{\langle \tau \rangle}=K(u,v)$ where
\[ u=\frac{xy}{c},\quad v=x+y \]
and the action of $-I$ on $K(u,v)$ is given by
\begin{align*}
-I &: \sqrt{a}\mapsto -\sqrt{a}, \ u\mapsto \frac{1}{u}, \ v\mapsto \frac{v}{u}.
\end{align*}
By Theorem \ref{thHKY14}, $K(x,y)^G$ is $k$-rational.\\

\noindent
\textbf{Case 4:} $H=\langle -\tau \rangle$.
We have $K=k(\sqrt{a})$ for some $a\in k$.
Then the group $G$ acts on $K(x,y)$ by
\begin{align*}
\tau : \sqrt{a}\mapsto -\sqrt{a}, \ x\mapsto y, \ y\mapsto x,\quad 
-I : \sqrt{a}\mapsto -\sqrt{a}, \ x\mapsto \frac{c}{x}, \ y\mapsto \frac{c}{y}.
\end{align*}
We have $K(x,y)^{\langle \tau \rangle}=k(u,v)$ where
\[ u=\frac{x+y}{\sqrt{a}(x-y)},\quad v=\sqrt{a}(x-y) \]
and the action of $-I$ on $k(u,v)$ is given by
\begin{align*}
-I : u\mapsto u, \ v\mapsto \frac{4ac}{(au^2-1)v}.
\end{align*}
Hence $K(x,y)^G=k(u,v)^{\langle -I \rangle}=k(u,v+\frac{4ac}{(au^2-1)v})$
is $k$-rational.

\subsection{The case of $G=S_3^{(1)}=\langle \rho^2, \tau \rangle$}\label{ss39}

The action of $G$ on $K(x,y)$ is given by
\begin{align*}
\rho^2 :  \ x\mapsto by, \ y\mapsto\frac{c}{xy},\quad
\tau : \ x\mapsto dy, \ y\mapsto ex\quad
(b,c,d,e\in k\setminus\{0\}).
\end{align*}
By replacing $by$ by $y$, $b^2c$ by $c$, 
$d/b$ by $d$ and $be$ by $e$, we may assume that $b=1$. 
By the equalities $\tau^2=I$ and
$\tau(\rho^2)\tau^{-1}=(\rho^2)^2$,
we see that the action of $G$ on $K(x,y)$ is given by
\begin{align*}
\rho^2 :  \ x\mapsto y, \ y\mapsto\frac{c}{xy},\quad
\tau :  \ x\mapsto y, \ y\mapsto x\quad
(c\in k\setminus\{0\}).
\end{align*}

We will treat the problem
for each of the proper normal subgroups
$H=\{1\}$, $\langle \rho^2 \rangle$ of $G$.\\

\textbf{Case 1:} $H=\{1\}$. In this case, $[K:k]=6$ and we consider the field $F=K^{\langle\rho^2\rangle}$ 
with $[F:k]=2$. 
We find that $K(x,y)^G\otimes_kF=(K(x,y)^G)F=K(x,y)^{\langle\rho^2\rangle}$ 
because $F$ $\cap$ $K(x,y)^G=k$ and $[K(x,y)^{\langle\rho^2\rangle}:K(x,y)^G]=2$.\\

\textbf{Case 1-1:} $H=\{1\}$ and $\omega\in k$. 
We assume that $\omega\in k$ where 
$\omega$ is a primitive cubic root of unity in $\overline{k}$. 
Then we have 
$K=F(\sqrt[3]{\alpha})$ for some $\alpha\in F=K^{\langle\rho^2\rangle}$ with $[F:k]=2$. 
We may also assume that $\alpha\in F\setminus k$ without loss of generality. 
Then we have $F=k(\alpha)$.\\

(i) Suppose that $(\alpha,c)_{3,k(\alpha)}=0$. 
Then there exists 
$\gamma_0\in F(\sqrt[3]{\alpha})$ such that 
$c=
N_{F(\sqrt[3]{\alpha})/F}(\gamma_0)=\gamma_0\gamma_1\gamma_2$ 
where $\gamma_1=\rho^2(\gamma_0)$, $\gamma_2=\rho^2(\gamma_1)$. 
We may assume that $\gamma_0\in K^{\langle\tau\rangle}$ without loss of generality. 
Then it follows from $\tau\rho^2\tau^{-1}=(\rho^2)^2$ that 
$\tau: \gamma_0\mapsto \gamma_0$, $\gamma_1\mapsto \gamma_2$, 
$\gamma_2\mapsto\gamma_1$. 
Define 
\begin{align*}
X:=\frac{x}{\gamma_1},\quad 
Y:=\frac{y}{\gamma_2}.
\end{align*}
Then $K(x,y)=K(X,Y)$ and the actions of $\rho^2$ and $\tau$ on 
$K(X,Y)=F(\sqrt[3]{\alpha})(X,Y)$ are given by
\begin{align*}
\rho^2 &: \sqrt[3]{\alpha}\mapsto \omega\,\sqrt[3]{\alpha},
\ \sqrt[3]{\beta}\mapsto\omega^{-1}\sqrt[3]{\beta},
\ X\mapsto Y, \ Y\mapsto\frac{1}{XY},\\
\tau &: \sqrt[3]{\alpha}\mapsto\sqrt[3]{\beta},
\ \sqrt[3]{\beta}\mapsto\sqrt[3]{\alpha},
\ X\mapsto Y, \ Y\mapsto X
\end{align*}
for 
$\beta=\tau(\alpha)\in F$ which satisfies  
$K=F(\sqrt[3]{\alpha})=F(\sqrt[3]{\beta})$. 
%
Hence, by Voskresenskii's theorem (Theorem \ref{thVos67}),  
$K(x,y)^G=F(\sqrt[3]{\alpha})(X,Y)^{\langle\rho^2,\tau\rangle}$ is $k$-rational. 

Indeed, we define
\begin{align*}
u:=\frac{1+\omega^{-1}X+\omega XY}{1+X+XY},\quad
v:=\frac{1+\omega X+\omega^{-1}XY}{1+X+XY}.
\end{align*}
Then it follows from Lemma \ref{l2.4} that
$K(X,Y)=K(u,v)$ and
\begin{align*}
\rho^2 &: \sqrt[3]{\alpha}\mapsto \omega\,\sqrt[3]{\alpha},
\ \sqrt[3]{\beta}\mapsto\omega^{-1}\sqrt[3]{\beta},
\ u\mapsto \omega u,\ v\mapsto \omega^{-1}v,\\
\tau &: \sqrt[3]{\alpha}\mapsto\sqrt[3]{\beta},
\ \sqrt[3]{\beta}\mapsto\sqrt[3]{\alpha},
\ u\mapsto \omega\,\frac{v-u^2}{uv-1},\ 
v\mapsto \omega^{-1}\,\frac{u-v^2}{uv-1}.
\end{align*}
Define
\begin{align*}
s:=\omega^{-1}\frac{u-v^2}{v(uv-1)},\quad 
t:=\omega^{-1}\frac{u(uv-1)}{v-u^2}.
\end{align*}
We find that 
$u=\omega^{-1} (s-t)t/(st^2-1)$, $v=-\omega(s-t)/(s^2t-1)$. 
Then $K(u,v)=K(s,t)$ and
\begin{align*}
\rho^2 &: \sqrt[3]{\alpha}\mapsto \omega\,\sqrt[3]{\alpha},
\ \sqrt[3]{\beta}\mapsto\omega^{-1}\sqrt[3]{\beta},
\ s\mapsto \omega^{-1}\,s, \ t\mapsto\omega^{-1}\,t,\\
\tau &: \sqrt[3]{\alpha}\mapsto\sqrt[3]{\beta},
\ \sqrt[3]{\beta}\mapsto\sqrt[3]{\alpha},
\ s \mapsto\frac{1}{s},\ t\mapsto\frac{1}{t}.
\end{align*}
Define
\begin{align*}
S:=\frac{\sqrt[3]{\beta}}{\sqrt[3]{\alpha}}\ s,\quad 
T:=\frac{\sqrt[3]{\beta}}{\sqrt[3]{\alpha}}\ t. 
\end{align*}
Then we have $K(s,t)=K(S,T)$ and
\begin{align*}
\rho^2 &: \sqrt[3]{\alpha}\mapsto \omega\,\sqrt[3]{\alpha},
\ \sqrt[3]{\beta}\mapsto\omega^{-1}\sqrt[3]{\beta},
\ S\mapsto S,\ T\mapsto T,\\
\tau &: \sqrt[3]{\alpha}\mapsto\sqrt[3]{\beta},
\ \sqrt[3]{\beta}\mapsto\sqrt[3]{\alpha},
\ S\mapsto \frac{1}{S}, \ T\mapsto \frac{1}{T}.
\end{align*}
Hence $K(x,y)^G=(K(S,T)^{\langle\rho^2\rangle})^{\langle\tau\rangle}=k(\alpha)(S,T)^{\langle\tau\rangle}
=k((\alpha-\beta)\frac{S+1}{S-1},(\alpha-\beta)\frac{T+1}{T-1})$ is $k$-rational.\\ 

(ii) Suppose that $(\alpha,c)_{3,k(\alpha)}\neq 0$. 
Then by Case 1-1 of Section \ref{ss34}, 
$K(x,y)^G\otimes_kF=K(x,y)^{\langle\rho^2\rangle}$ is not $F$-unirational 
where $F=K^{\langle\rho^2\rangle}=k(\alpha)$ with $[F:k]=2$. 
Hence $K(x,y)^G$ is not $k$-unirational.\\ 


\noindent
\textbf{Case 1-2:} $H=\{1\}$ and $\omega\in K\setminus k$.
In this case, we have $F=K^{\langle\rho^2\rangle}=k(\omega)$ with $[F:k]=2$
and $K=F(\sqrt[3]{\alpha})$ for some $\alpha\in F=k(\omega)$. 
We may also assume that $\alpha\in F\setminus k$ without loss of generality. 
Then we have $F=k(\omega)=k(\alpha)$.\\

(i) Suppose that $(\alpha,c)_{3,k(\alpha)}=0$. 
Then there exists 
$\gamma_0\in F(\sqrt[3]{\alpha})$ such that 
$c=
N_{F(\sqrt[3]{\alpha})/F}(\gamma_0)=\gamma_0\gamma_1\gamma_2$ 
where $\gamma_1=\rho^2(\gamma_0)$, $\gamma_2=\rho^2(\gamma_1)$. 
We may assume that $\gamma_0\in K^{\langle\tau\rangle}$ without loss of generality. 
Then it follows from $\tau\rho^2\tau^{-1}=(\rho^2)^2$ that 
$\tau: \gamma_0\mapsto \gamma_0$, $\gamma_1\mapsto \gamma_2$, 
$\gamma_2\mapsto\gamma_1$. 
Define 
\begin{align*}
X:=\frac{x}{\gamma_1},\quad 
Y:=\frac{y}{\gamma_2}.
\end{align*}
Then the actions of $\rho^2$ and $\tau$
on $K(X,Y)=F(\sqrt[3]{\alpha})(X,Y)=k(\omega,\sqrt[3]{\alpha})(X,Y)$ are given by
\begin{align*}
\rho^2 &: \sqrt[3]{\alpha}\mapsto \omega\,\sqrt[3]{\alpha},
\ \omega\mapsto\omega,\ X\mapsto Y, \ Y\mapsto\frac{1}{XY},\\
\tau &: \sqrt[3]{\alpha}\mapsto\sqrt[3]{\alpha},
\ \omega\mapsto\omega^{-1},
\ X\mapsto Y, \ Y\mapsto X. 
\end{align*}
%
Hence, by Voskresenskii's theorem (Theorem \ref{thVos67}),  
$K(x,y)^G=k(\omega,\sqrt[3]{\alpha})(X,Y)^{\langle\rho^2,\tau\rangle}$ is $k$-rational.\\ 

(ii) Suppose that $(\alpha,c)_{3,k(\alpha)}\neq 0$. 
Then by Case 1-1 of Section \ref{ss34}, 
$K(x,y)^G\otimes_kF=K(x,y)^{\langle\rho^2\rangle}$ is not $F$-unirational 
where $F=k(\omega)=k(\alpha)$ with $[F:k]=2$. 
Hence $K(x,y)^G$ is not $k$-unirational.\\ 

\noindent
\textbf{Case 1-3:} $H=\{1\}$ and $\omega\not\in K$.
%
We take $L=K(\omega)$
and ${\rm Gal}(L/K)=\langle\varphi_\omega\rangle$
where 
$\varphi_\omega : \omega\mapsto\omega^{-1}$.
We extend the actions of $\rho^2$, $\tau$, $\varphi_\omega$ to $L(x,y)$ in the natural way.
Then $\langle\rho^2,\tau,\varphi_\omega\rangle\simeq S_3\times C_2\simeq D_6$ 
and we find that $L(x,y)^{\langle\rho^2,\tau,\varphi_\omega\rangle}=L^{\langle\varphi_\omega\rangle}(x,y)^G=K(x,y)^G$. 

We also have 
$L=F(\omega,\sqrt[3]{\alpha})$ for some $\alpha\in F(\omega)$ and $F=K^{\langle\rho^2\rangle}$ with $[F:k]=2$. 
We may assume that 
$\alpha\in F(\omega)\setminus k$ without loss of generality. 
Then we have $F(\omega)=K^{\langle\rho^2\rangle}(\omega)=k(\omega,\alpha)$ with $[F(\omega):k]=4$.\\  

(i) Suppose that $(\alpha,c)_{3,k(\omega,\alpha)}=0$. 
Then we see that there exists 
$\gamma_0\in L=F(\omega,\sqrt[3]{\alpha})$ such that 
$c=N_{F(\omega,\sqrt[3]{\alpha})/F(\omega)}(\gamma_0)$ $=$ $\gamma_0\gamma_1\gamma_2$ 
where $\gamma_1=\rho^2(\gamma_0)$, $\gamma_2=\rho^2(\gamma_1)$. 
We may assume that $\gamma_0\in K^{\langle\tau\rangle}$ without loss of generality. 
Then it follows from $\tau\rho^2\tau^{-1}=(\rho^2)^2$ that 
$\tau: \gamma_0\mapsto \gamma_0$, $\gamma_1\mapsto \gamma_2$, 
$\gamma_2\mapsto\gamma_1$. 
Define 
\begin{align*}
X:=\frac{x}{\gamma_1},\quad 
Y:=\frac{y}{\gamma_2}.
\end{align*}
Then $L(x,y)=L(X,Y)$ and the actions of $\rho^2$, $\tau$ and $\varphi_\omega$ on 
$L(X,Y)=F(\omega,\sqrt[3]{\alpha})(X,Y)$ are given by
\begin{align*}
\rho^2 &: \sqrt[3]{\alpha}\mapsto \omega\,\sqrt[3]{\alpha},
\ \sqrt[3]{\beta}\mapsto\omega^{-1}\sqrt[3]{\beta},\ \omega\mapsto\omega,
\ X\mapsto Y, \ Y\mapsto\frac{1}{XY},\\
\tau &: \sqrt[3]{\alpha}\mapsto\sqrt[3]{\beta},
\ \sqrt[3]{\beta}\mapsto\sqrt[3]{\alpha},\ \omega\mapsto\omega,
\ X\mapsto Y, \ Y\mapsto X,\\ 
\varphi_\omega &:  \sqrt[3]{\alpha}\mapsto\sqrt[3]{\beta},\ 
\sqrt[3]{\beta}\mapsto \sqrt[3]{\alpha},\ 
\omega\mapsto\omega^{-1},
\ X\mapsto X, \ Y\mapsto Y
\end{align*}
for 
$\beta=\tau(\alpha)\in F$ which satisfies  
$L=F(\omega,\sqrt[3]{\alpha})=F(\omega,\sqrt[3]{\beta})$. 
Define
\begin{align*}
u:=\frac{1+\omega^{-1}X+\omega XY}{1+X+XY},\quad
v:=\frac{1+\omega X+\omega^{-1}XY}{1+X+XY}.
\end{align*}
Then it follows from Lemma \ref{l2.4} that
$L(X,Y)=L(u,v)$ and
\begin{align*}
\rho^2 &: \sqrt[3]{\alpha}\mapsto \omega\,\sqrt[3]{\alpha},
\ \sqrt[3]{\beta}\mapsto\omega^{-1}\sqrt[3]{\beta},\ \omega\mapsto\omega,
\ u\mapsto \omega u,\ v\mapsto \omega^{-1}v,\\
\tau &: \sqrt[3]{\alpha}\mapsto\sqrt[3]{\beta},
\ \sqrt[3]{\beta}\mapsto\sqrt[3]{\alpha},\ \omega\mapsto\omega,
\ u\mapsto \omega\,\frac{v-u^2}{uv-1},\ 
v\mapsto \omega^{-1}\,\frac{u-v^2}{uv-1},\\
\varphi_\omega &:  \sqrt[3]{\alpha}\mapsto\sqrt[3]{\beta},\ 
\sqrt[3]{\beta}\mapsto \sqrt[3]{\alpha},\ 
\omega\mapsto\omega^{-1},
\ u\mapsto v, \ v\mapsto u. 
\end{align*}
As in Case 1-1, define
\begin{align*}
s:=\omega^{-1}\frac{u-v^2}{v(uv-1)},\quad 
t:=\omega^{-1}\frac{u(uv-1)}{v-u^2}.
\end{align*}
Then $L(u,v)=L(s,t)$ and
\begin{align*}
\rho^2 &: \sqrt[3]{\alpha}\mapsto \omega\,\sqrt[3]{\alpha},
\ \sqrt[3]{\beta}\mapsto\omega^{-1}\sqrt[3]{\beta},\ \omega\mapsto\omega,
\ s\mapsto \omega^{-1}\,s, \ t\mapsto\omega^{-1}\,t,\\
\tau &: \sqrt[3]{\alpha}\mapsto\sqrt[3]{\beta},
\ \sqrt[3]{\beta}\mapsto\sqrt[3]{\alpha},\ \omega\mapsto\omega,
\ s \mapsto\frac{1}{s},\ t\mapsto\frac{1}{t},\\
\varphi_\omega &:  \sqrt[3]{\alpha}\mapsto\sqrt[3]{\beta},\ 
\sqrt[3]{\beta}\mapsto \sqrt[3]{\alpha},\ 
\omega\mapsto\omega^{-1},
\ s\mapsto \frac{1}{t}, \ t\mapsto \frac{1}{s}.
\end{align*}
Define
\begin{align*}
S:=\frac{\sqrt[3]{\beta}}{\sqrt[3]{\alpha}}\ s,\quad  
T:=\frac{\sqrt[3]{\beta}}{\sqrt[3]{\alpha}}\ t. 
\end{align*}
Then we have $L(s,t)=L(S,T)$ and
\begin{align*}
\rho^2 &: \sqrt[3]{\alpha}\mapsto \omega\,\sqrt[3]{\alpha},
\ \sqrt[3]{\beta}\mapsto\omega^{-1}\sqrt[3]{\beta},\ \omega\mapsto\omega,
\ S\mapsto S,\ T\mapsto T,\\
\tau &: \sqrt[3]{\alpha}\mapsto\sqrt[3]{\beta},
\ \sqrt[3]{\beta}\mapsto\sqrt[3]{\alpha},\ \omega\mapsto\omega,
\ S\mapsto \frac{1}{S}, \ T\mapsto \frac{1}{T},\\
\varphi_\omega &:  \sqrt[3]{\alpha}\mapsto\sqrt[3]{\beta},\ 
\sqrt[3]{\beta}\mapsto \sqrt[3]{\alpha},\ 
\omega\mapsto\omega^{-1},
\ S\mapsto \frac{1}{T}, \ T\mapsto \frac{1}{S}. 
\end{align*}
Hence it follows from Voskresenskii's theorem (Theorem \ref{thVos67}) that 
$K(x,y)^G=L(S,T)^{\langle\rho^2,\tau,\varphi_\omega\rangle}=L^{\langle\rho^2\rangle}(S,T)^{\langle\tau,\varphi_\omega\rangle}
=F(\omega)(S,T)^{\langle\tau,\varphi_\omega\rangle}$ is $k$-rational.\\

(ii) Suppose that $(\alpha,c)_{3,k(\omega,\alpha)}\neq 0$. 
By Case 1-2 of Section \ref{ss34}, 
$K(x,y)^G\otimes_kF=K(x,y)^{\langle\rho^2\rangle}$ is not $F$-unirational 
where $F(\omega)=k(\omega,\alpha)$ and 
$F=K^{\langle\rho^2\rangle}$ with $[F:k]=2$. 
Hence $K(x,y)^G$ is not $k$-unirational. 

\begin{remark}\label{r3.3}
One of the referees told that 
the case $G=S_3^{(1)}$ with $H=\{1\}$ corresponds to 
a del Pezzo surface $S$ of degree $6$ and we see that 
$S$ is $k$-rational if and only if $S_F=S\otimes_k F$ is $F$-rational 
where $F=K^{\langle\rho^2\rangle}$ with $[F:k]=2$. 
It is also interesting to compare this with Trepalin \cite[Lemma 4]{Tr1}. 
\end{remark}

\noindent
\textbf{Case 2:} $H=\langle \rho^2 \rangle$.
We have $K=k(\sqrt{a})$ for some $a\in k$. 
The group $G$ acts on $K(x,y)$ by
\begin{align*}
\rho^2 : \sqrt{a}\mapsto \sqrt{a}, \ x\mapsto y, \ y\mapsto\frac{c}{xy},\quad 
\tau : \sqrt{a}\mapsto -\sqrt{a}, \ x\mapsto y, \ y\mapsto x.
\end{align*}
By Lemma \ref{l2.3},
we have $K(x,y)^{\langle \rho^2 \rangle}=K(X,Y)$ where
\begin{align*}
X &= \frac{y(y^3x^3+cx^3-3cyx^2+c^2)}{y^2x^4-y^3x^3+y^4x^2-cyx^2-cy^2x+c^2},\\
Y &= \frac{x(x^3y^3+cy^3-3cxy^2+c^2)}{y^2x^4-y^3x^3+y^4x^2-cyx^2-cy^2x+c^2}
\end{align*}
and the action of $\tau$ on $K(X,Y)$ is given by
\[
\tau : \sqrt{a}\mapsto -\sqrt{a}, \ X\mapsto Y, \ Y\mapsto X.
\]
We have
$K(x,y)^G=(k(\sqrt{a})(x,y)^{\langle\rho^2\rangle})^{\langle\tau\rangle}=
k(X+Y,\sqrt{a}(X-Y))$.
Hence $K(x,y)^G$ is $k$-rational.

\subsection{The case of $G=S_3^{(2)}=\langle \rho^2, -\tau \rangle$}\label{ss310}


The action of $G$ on $K(x,y)$ is given by
\begin{align*}
\rho^2 :  \ x\mapsto by, \ y\mapsto\frac{c}{xy},\quad
-\tau :  \ x\mapsto \frac{d}{y}, \ y\mapsto \frac{e}{x}\quad
(b,c,d,e\in k\setminus\{0\}).
\end{align*}
By replacing $by$ by $y$, $b^2c$ by $c$, 
$bd$ by $d$ and $be$ by $e$, 
we may assume that $b=1$. 
By the equalities $(-\tau)^2=I$ and
$(-\tau)(\rho^2)(-\tau)^{-1}=(\rho^2)^2$,
we see that the action of $G$ on $K(x,y)$ is given by
\begin{align*}
\rho^2 :  \ x\mapsto y, \ y\mapsto\frac{c}{xy},\quad
-\tau :  \ x\mapsto \frac{d}{y}, \ y\mapsto \frac{d}{x}
\end{align*}
with $c^2=d^3$.
Define $X:=dx/c$, $Y:=dy/c$.
Then $K(x,y)=K(X,Y)$ and the action of $G$ on $K(X,Y)$ is given by
\begin{align*}
\rho^2 :  \ X\mapsto Y, \ Y\mapsto\frac{1}{XY},\quad
-\tau :  \ X\mapsto \frac{1}{Y}, \ Y\mapsto \frac{1}{X}.
\end{align*}
Hence the problem can be reduced to the purely quasi-monomial case.
By Theorem \ref{thHKY14},
$K(x,y)^G$ is $k$-rational for each of the proper normal subgroups 
$H=\{1\}$, $\langle \rho^2 \rangle$ of $G$.

\subsection{The case of $G=D_4=\langle \sigma, \tau \rangle$}\label{ss311}

By the equalities $\sigma^4=I$ and
$\tau\sigma\tau^{-1}=\sigma^{-1}$,
we see that the action of $G$ on $K(x,y)$ is given by
\begin{align*}
\sigma &: x\mapsto y, \ y\mapsto \frac{c}{x}\quad
(c\in k\setminus\{0\}), \\
\tau &: x\mapsto \ep y, \ y\mapsto \ep x\quad (\ep=\pm 1).
\end{align*}

We will treat the problem
for each of the proper normal subgroups 
$H=\{1\}$, $\langle -I \rangle=\langle\sigma^2\rangle$, 
$\langle -I, \tau\sigma \rangle=\langle -I,\lambda\rangle$, 
$\langle -I, \tau \rangle$, $\langle \sigma \rangle$ of $G$.\\

\noindent
\textbf{Case 1:} $H=\{1\}$. 
%
In this case, $[K:k]=8$ and 
we can take suitable $\alpha, \beta\in K$ which satisfy
$K=k(\alpha,\beta)$.
Then the action of $G$ on $K(x,y)$ is given by
\begin{align*}
\sigma &: \alpha\mapsto\beta,\ \beta\mapsto-\alpha,\ 
x\mapsto y, \ y\mapsto \frac{c}{x},\\
\tau &: \alpha\mapsto\beta,\ \beta\mapsto\alpha,\ 
x\mapsto \ep y, \ y\mapsto \ep x,\\
\sigma\tau &: \alpha\mapsto-\alpha,\ \beta\mapsto\beta,\ 
x\mapsto \frac{c}{\ep x}, \ y\mapsto \ep y,\\
\sigma^3\tau &: \alpha\mapsto\alpha,\ \beta\mapsto-\beta,\ 
x\mapsto \ep x, \ y\mapsto \frac{c}{\ep y}.
\end{align*} 
We consider the field 
$F=K^{\langle\sigma^2,\sigma\tau\rangle}=K^{\langle\sigma\tau,\sigma^3\tau\rangle}
=k(\alpha^2,\beta^2)=k(\alpha^2)$ with $[F:k]=2$. 
%
We find that $K(x,y)^G\otimes_k F=(K(x,y)^G)F=K(x,y)^{\langle\sigma^2,\sigma\tau\rangle}$ 
because $F$ $\cap$ $K(x,y)^G=k$ and $[K(x,y)^{\langle\sigma^2,\sigma\tau\rangle}:K(x,y)^G]=2$.\\ 

\noindent
\textbf{Case 1-1:} $H=\{1\}$ and $\ep=1$.\\ 

(i) Suppose $(\alpha^2,c)_{2,k(\alpha^2)}=0$. 
Then there exists 
$\gamma_0\in k(\alpha)$ such that 
$c=
N_{k(\alpha)/k(\alpha^2)}(\gamma_0)=\gamma_0\gamma_1$ 
where $\gamma_1=\sigma\tau(\gamma_0)$. 
Define 
\begin{align*}
X:=\frac{x}{\gamma_0},\quad 
Y:=\frac{y}{\sigma(\gamma_0)}.
\end{align*}
Then $K(x,y)=K(X,Y)$ and the actions of 
$\sigma$, $\sigma\tau$ and $\sigma^3\tau$ on $K(X,Y)$ are given by 
\begin{align*}
\sigma : \alpha\mapsto \beta,\ \beta\mapsto -\alpha,
\ X\mapsto Y, \ Y\mapsto \frac{1}{X},\\ 
\sigma\tau : \alpha\mapsto -\alpha,\ \beta\mapsto \beta,
\ X\mapsto \frac{1}{X}, \ Y\mapsto Y,\\
\sigma^3\tau : \alpha\mapsto \alpha,\ \beta\mapsto -\beta,
\ X\mapsto X, \ Y\mapsto \frac{1}{Y}.
\end{align*}
Hence, by Voskresenskii's theorem (Theorem \ref{thVos67}), 
$K(x,y)^G=K(X,Y)^{\langle\sigma,\sigma\tau\rangle}$ is $k$-rational.

Indeed, we define
\[
u:=\alpha\,\frac{X+1}{X-1},\quad 
v:=\beta\,\frac{Y+1}{Y-1}. 
\]
Then the actions of $\sigma$, $\sigma\tau$ and $\sigma^3\tau$ 
on $K(x,y)=k(\alpha,\beta)(u,v)$ are given by 
\begin{align*}
\sigma &: \alpha\mapsto \beta,\ \beta\mapsto -\alpha, 
\ u\mapsto v, \ v\mapsto u, \\
\sigma\tau &: \alpha\mapsto -\alpha,\ \beta\mapsto \beta,
\ u\mapsto u,\ v\mapsto v,\\ 
\sigma^3\tau &: \alpha\mapsto \alpha,\ \beta\mapsto -\beta,
\ u\mapsto u,\ v\mapsto v. 
\end{align*}
We have $K(x,y)^{\langle\sigma\tau,\sigma^3\tau\rangle}=k(\alpha,\beta)(u,v)^{\langle\sigma\tau,\sigma^3\tau\rangle}=k(A,B)(u,v)$ 
where $A=\alpha^2$, $B=\beta^2$. 
The action of $\sigma$ on $k(A,B)(u,v)$ is given by 
\begin{align*}
\sigma: \ A\mapsto B, \ B\mapsto A, \ u\mapsto v, \ v\mapsto u.
\end{align*}
Define 
\[ 
a:=A+B, \ b:=(A-B)^2, \ U:=u+v, \ V:=(A-B)(u-v). 
\] 
Then $a, b\in K^{\langle\sigma, \tau \rangle}=k$ and 
$k(A,B)(u,v)^{\langle\sigma\rangle}=k(a,b)(U,V)=k(U,V)$. 
Hence $K(x,y)^G=k(U,V)$ is $k$-rational. \\ 

(ii) Suppose $(\alpha^2,c)_{2,k(\alpha^2)}\neq 0$. 
We have $K(x,y)^G\otimes_k F=K(x,y)^{\langle\sigma\tau,\sigma^3\tau\rangle}$ 
and the actions of $\sigma\tau$ and $\sigma^3\tau$ 
on $K(x,y)=k(\alpha,\beta)(x,y)$ are given by 
\begin{align*}
\sigma\tau &: \alpha\mapsto-\alpha,\ \beta\mapsto\beta,\ 
x\mapsto \frac{c}{x}, \ y\mapsto y,\\
\sigma^3\tau &: \alpha\mapsto\alpha,\ \beta\mapsto-\beta,\ 
x\mapsto x, \ y\mapsto \frac{c}{y}. 
\end{align*}
By Case 1 of Section \ref{ss37}, $K(x,y)^G\otimes_k F$ is not $F$-unirational 
where $F=K^{\langle\sigma\tau,\sigma^3\tau\rangle}=k(\alpha^2,\beta^2)=k(\alpha^2)$ 
with $[F:k]=2$. 
Hence $k(x,y)^G$ is not $k$-unirational. 
(Note that $(\alpha^2,c)_{2,k(\alpha^2)}\neq 0$ if and only if 
$(\beta^2,c)_{2,k(\beta^2)}\neq 0$.)\\

\textbf{Case 1-2:} $H=\{1\}$ and $\ep=-1$. Define
\begin{align*}
x^\prime:=\beta x,\quad y^\prime:=-\alpha y.
\end{align*}
Then the action of $G$ on $K(x,y)=K(x^\prime,y^\prime)$ is given by
\begin{align*}
\sigma &: \alpha\mapsto\beta,\ \beta\mapsto-\alpha,\ 
x^\prime\mapsto y^\prime, \ y^\prime\mapsto \frac{-\beta^2c}{x^\prime},\\
\sigma\tau &: \alpha\mapsto-\alpha,\ \beta\mapsto\beta,\ 
x^\prime\mapsto \frac{-\beta^2c}{x^\prime}, \ y^\prime\mapsto y^\prime,\\
\sigma^3\tau &: \alpha\mapsto\alpha,\ \beta\mapsto-\beta,\
x^\prime\mapsto x^\prime, \ y^\prime\mapsto \frac{-\alpha^2c}{y^\prime}.
\end{align*} 

(i) Suppose $(\alpha^2,-\beta^2c)_{2,k(\alpha^2)}=0$. 
Then there exists 
$\gamma_0\in k(\alpha)$ such that 
$-\beta^2c=
N_{k(\alpha)/k(\alpha^2)}(\gamma_0)=\gamma_0\gamma_1$ 
where $\gamma_1=\sigma\tau(\gamma_0)$.  
Define 
\begin{align*}
X:=\frac{x^\prime}{\gamma_0},\quad 
Y:=\frac{y^\prime}{\sigma(\gamma_0)}.
\end{align*}
Then $K(x,y)=K(X,Y)$ and the actions of 
$\sigma$, $\sigma\tau$ and $\sigma^3\tau$ on $K(X,Y)$ are given the same as in Case 1-1. 
Hence $K(x,y)^G=K(X,Y)^{\langle\sigma,\sigma\tau\rangle}$ is $k$-rational.\\ 

(ii) Suppose $(\alpha^2,-\beta^2c)_{2,k(\alpha^2)}\neq 0$. 
As in the case $\ep=1$, 
we have $K(x,y)^G\otimes_k F=K(x,y)^{\langle\sigma\tau,\sigma^3\tau\rangle}$ 
and the actions of $\sigma\tau$ and $\sigma^3\tau$ 
on $K(x,y)=k(\alpha,\beta)(x,y)$ are given by 
\begin{align*}
\sigma\tau &: \alpha\mapsto-\alpha,\ \beta\mapsto\beta,\ 
x^\prime\mapsto \frac{-\beta^2c}{x^\prime}, \ y^\prime\mapsto y^\prime,\\
\sigma^3\tau &: \alpha\mapsto\alpha,\ \beta\mapsto-\beta,\
x^\prime\mapsto x^\prime, \ y^\prime\mapsto \frac{-\alpha^2c}{y^\prime}.
\end{align*}
By Case 1 of Section \ref{ss37}, $K(x,y)^G\otimes_k F$ is not $F$-unirational 
where $F=K^{\langle\sigma\tau,\sigma^3\tau\rangle}=k(\alpha^2,\beta^2)
=k(\alpha^2)=k(\beta^2)$ 
with $[F:k]=2$. 
Hence $k(x,y)^G$ is not $k$-unirational. 
(Note that $(\alpha^2,-\beta^2c)_{2,k(\alpha^2)}$ $\neq 0$ if and only if 
$(\beta^2,-\alpha^2c)_{2,k(\beta^2)}\neq 0$.)

\begin{remark}\label{r3.4}
As in Section \ref{ss35}, 
one of the referees told that the case $G=D_4$ with $H=\{1\}$ corresponds to 
a del Pezzo surface $S$ of degree $8$ and it is well-known that 
$S$ is $k$-rational if and only if $S_F=S\otimes_k F$ is $F$-rational 
where $F=K^{\langle\sigma\tau,\sigma^3\tau\rangle}$ with $[F:k]=2$. 
It is also interesting to consider a more geometric proof 
and compare it with recent related papers 
Shramov and Vologodsky \cite[Section 7]{SV} and Trepalin \cite{Tr2}. 
\end{remark}

\noindent
\textbf{Case 2:} $H=\langle\sigma^2\rangle=\langle -I \rangle$.
We have $K=k(\sqrt{a},\sqrt{b})$ for some $a,b\in k$. 
The group $G$ acts on $K(x,y)$ by
\begin{align*}
\sigma &: \sqrt{a}\mapsto -\sqrt{a}, \ \sqrt{b}\mapsto \sqrt{b},
\ x\mapsto y, \ y\mapsto \frac{c}{x},\\
\tau &: \sqrt{a}\mapsto \sqrt{a}, \ \sqrt{b}\mapsto -\sqrt{b},\ 
x\mapsto \ep y, \ y\mapsto \ep x\ \ (\ep=\pm 1).
\end{align*}
By Lemma \ref{l2.1},
we have $K(x,y)^{\langle -I \rangle}=K(X,Y)$ where
\[
X=\frac{xy+c}{x+y},\quad 
Y=\frac{xy-c}{x-y}
\]
and the actions of $\sigma$ and $\tau$ on $K(X,Y)$ are given by
\begin{align*}
\sigma &: \sqrt{a}\mapsto -\sqrt{a}, \ \sqrt{b}\mapsto \sqrt{b}, \
X\mapsto \frac{c}{X}, \ Y\mapsto -\frac{c}{Y},  \\
\tau &: \sqrt{a}\mapsto \sqrt{a}, \ \sqrt{b}\mapsto -\sqrt{b},\ 
X\mapsto \ep X, \ Y\mapsto -\ep Y.
\end{align*}\vspace*{0mm}

\noindent
\textbf{Case 2-1:} $H=\langle\sigma^2\rangle=\langle -I \rangle$ and $\ep=1$.
Define
\[
S:=X,\quad T:=\sqrt{b}Y.
\]
Then $K(X,Y)^{\langle \tau \rangle}=k(\sqrt{a})(S,T)$
and the action of $\sigma$ on $k(\sqrt{a})(S,T)$ is given by
\begin{align*}
\sigma &: \sqrt{a}\mapsto -\sqrt{a},\ S\mapsto \frac{c}{S},\
T\mapsto -\frac{bc}{T}.
\end{align*}
It follows from Theorem \ref{t2.11} that
$K(x,y)^G$ is $k$-rational if and only if 
$K(x,y)^G$ is $k$-unirational if and only if 
$(a,c)_{2,k}=0$ and $(a,-bc)_{2,k}=0$.\\

\noindent
\textbf{Case 2-2:} $H=\langle\sigma^2\rangle=\langle -I \rangle$ and $\ep=-1$.
Define
\[
S:=\sqrt{b}X,\quad T:=Y.
\]
Then $K(X,Y)^{\langle \tau \rangle}=k(\sqrt{a})(S,T)$
and the action of $\sigma$ on $k(\sqrt{a})(S,T)$ is given by
\begin{align*}
\sigma &: \sqrt{a}\mapsto -\sqrt{a},\ S\mapsto \frac{bc}{S},\
T\mapsto -\frac{c}{T}.
\end{align*}
It follows from Theorem \ref{t2.11} that
$K(x,y)^G$ is $k$-rational if and only if 
$K(x,y)^G$ is $k$-unirational if and only if 
$(a,bc)_{2,k}=0$ and $(a,-c)_{2,k}=0$.\\

\noindent
\textbf{Case 3:} $H=\langle -I, \tau\sigma \rangle=\langle -I, \lambda\rangle$.
We have $K=k(\sqrt{a})$ for some $a\in k$. 
The group $G$ acts on $K(x,y)$ by
\begin{align*}
\sigma &: \sqrt{a}\mapsto -\sqrt{a},\ x\mapsto y,
\ y\mapsto \frac{c}{x},\\
\tau &: \sqrt{a}\mapsto -\sqrt{a}, \ 
x\mapsto \ep y, \ y\mapsto \ep x\ \ (\ep=\pm 1).
\end{align*}
By Lemma \ref{l2.1},
we have $K(x,y)^{\langle -I \rangle}=K(X,Y)$ where
\[
X=\frac{xy+c}{x+y},\quad 
Y=\frac{xy-c}{x-y}
\]
and the actions of $\sigma$ and $\tau$ on $K(X,Y)$ are given by
\begin{align*}
\sigma &: \sqrt{a}\mapsto -\sqrt{a},\  X\mapsto \frac{c}{X},
\ Y\mapsto -\frac{c}{Y},  \\
\tau &: \sqrt{a}\mapsto -\sqrt{a}, \ X\mapsto \ep X, \ Y\mapsto -\ep Y,\\
\tau\sigma &: \sqrt{a}\mapsto\sqrt{a}, \ X\mapsto \frac{\ep c}{X},
\ Y\mapsto \frac{\ep c}{Y}.
\end{align*}
Define
\[
S:=\frac{XY+\ep c}{X+Y},\quad T:=\frac{XY-\ep c}{X-Y}.
\]
Then, by Lemma \ref{l2.1} again, we have 
$K(X,Y)^{\langle \tau\sigma \rangle}=K(S,T)$ and 
\begin{align*}
\sigma=\tau &: \sqrt{a}\mapsto -\sqrt{a},\
S\mapsto -\ep T,\ T\mapsto -\ep S.
\end{align*}
Hence $K(x,y)^G=k(S-\ep T,\sqrt{a}(S+\ep T))$ is $k$-rational.\\

\noindent
\textbf{Case 4:} $H=\langle -I, \tau \rangle$.
We have $K=k(\sqrt{a})$ for some $a\in k$. 
The group $G$ acts on $K(x,y)$ by
\begin{align*}
\sigma &: \sqrt{a}\mapsto -\sqrt{a}, \  x\mapsto y, \
y\mapsto \frac{c}{x},\\
\tau &: \sqrt{a}\mapsto \sqrt{a},\ 
x\mapsto \ep y, \ y\mapsto \ep x\ \ (\ep=\pm 1).
\end{align*}
By Lemma \ref{l2.1},
we have $K(x,y)^{\langle -I \rangle}=K(X,Y)$ where
\[
X=\frac{xy+c}{x+y},\quad 
Y=\frac{xy-c}{x-y}.
\]
Then the actions of $\sigma$ and $\tau$ on $K(X,Y)$ are given by
\begin{align*}
\sigma &: \sqrt{a}\mapsto -\sqrt{a},\  X\mapsto \frac{c}{X},
\ Y\mapsto -\frac{c}{Y},  \\
\tau &: \sqrt{a}\mapsto \sqrt{a}, \ X\mapsto \ep X, \ Y\mapsto -\ep Y.
\end{align*}\vspace*{0mm}

\noindent
\textbf{Case 4-1:} $H=\langle -I, \tau \rangle$ and $\ep=1$.
Define
\[
S:=X,\quad T:=\frac{Y^2}{c}.
\]
Then $K(X,Y)^{\langle \tau \rangle}=K(S,T)$
and the action of $\sigma$ on $K(S,T)$ is given by
\begin{align*}
\sigma &: \sqrt{a}\mapsto -\sqrt{a}, \ S\mapsto \frac{c}{S},
\ T\mapsto \frac{1}{T}.
\end{align*}
It follows from Theorem \ref{t2.11} 
(or Theorem \ref{t2.6} after $T^\prime:=\sqrt{a}\,\frac{T+1}{T-1}$) that 
$K(x,y)^G$ is $k$-rational if and only if 
$K(x,y)^G$ is $k$-unirational if and only if 
$(a,c)_{2,k}=0$.\\

\noindent
\textbf{Case 4-2:} $H=\langle -I, \tau \rangle$ and $\ep=-1$.
Define
\[
S:=\frac{X^2}{c},\quad T:=Y.
\]
Then $K(X,Y)^{\langle \tau \rangle}=K(S,T)$
and the action of $\sigma$ on $K(S,T)$ is given by
\begin{align*}
\sigma &: \sqrt{a}\mapsto -\sqrt{a}, \ S\mapsto \frac{1}{S},
\ T\mapsto -\frac{c}{T}.
\end{align*}
It follows from Theorem \ref{t2.11} 
(or Theorem \ref{t2.6} after $S^\prime:=\sqrt{a}\,\frac{S+1}{S-1}$) that 
$K(x,y)^G$ is $k$-rational if and only if 
$K(x,y)^G$ is $k$-unirational if and only if 
$(a,-c)_{2,k}=0$.\\

\noindent
\textbf{Case 5:} $H=\langle \sigma  \rangle$.
We have $K=k(\sqrt{a})$ for some $a\in k$. 
The group $G$ acts on $K(x,y)$ by
\begin{align*}
\sigma &: \sqrt{a}\mapsto \sqrt{a}, \  x\mapsto y,
\ y\mapsto \frac{c}{x}, \\
\tau &: \sqrt{a}\mapsto -\sqrt{a},\ 
x\mapsto \ep y, \ y\mapsto \ep x\ \ (\ep=\pm 1).
\end{align*}
By Lemma \ref{l2.1}, we have
$K(x,y)^{\langle -I \rangle}=K(X,Y)$ where
\[
X=\frac{xy+c}{x+y},\quad 
Y=\frac{xy-c}{x-y}.
\]
Then the actions of $\sigma$ and $\tau$ on $K(X,Y)$ are given by
\begin{align*}
\sigma &: \sqrt{a}\mapsto \sqrt{a},\  X\mapsto \frac{c}{X},
\ Y\mapsto -\frac{c}{Y},  \\
\tau &: \sqrt{a}\mapsto -\sqrt{a}, \ X\mapsto \ep X, \ Y\mapsto -\ep Y.
\end{align*}\vspace*{0mm}

\noindent
\textbf{Case 5-1:} $H=\langle \sigma  \rangle$ and $\ep=1$.
Define $S:=X$, $T:=\sqrt{a}Y$.
Then we have $K(X,Y)^{\langle \tau \rangle}=k(S,T)$ and
\begin{align*}
\sigma &:  \ S\mapsto \frac{c}{S}, \ T\mapsto -\frac{ac}{T}.
\end{align*}
By Lemma \ref{l2.2}, $K(x,y)^G$ is $k$-rational.\\

\noindent
\textbf{Case 5-2:} $H=\langle \sigma  \rangle$ and $\ep=-1$.
Define $S:=\sqrt{a}X$, $T:=Y$.
Then we have $K(X,Y)^{\langle \tau \rangle}=k(S,T)$ and
\begin{align*}
\sigma &:  \ S\mapsto \frac{ac}{S}, \ T\mapsto -\frac{c}{T}.
\end{align*}
By Lemma \ref{l2.2}, $K(x,y)^G$ is $k$-rational.

\subsection{The case of $G=D_6=\langle \rho, \tau \rangle$}\label{ss312}


The action of $G$ on $K(x,y)$ is given by
\begin{align*}
\rho &: x\mapsto  bxy, \ y\mapsto \frac{c}{x},\\
\tau &: x\mapsto dy, \ y\mapsto ex\quad (b,c,d,e\in k\setminus\{0\}).
\end{align*}
By replacing $dy$ by $y$, $b/d$ by $b$ and $cd$ by $c$, 
we may assume that $d=e=1$.
By the equality $\tau\rho\tau^{-1}=\rho^{-1}$,
we have $c=1/b^2$.
Define $X:=bx$, $Y:=by$.
Then $K(x,y)=K(X,Y)$ and
the action of $G$ on $K(X,Y)$ is given by
\begin{align*}
\rho &: X\mapsto  XY, \ Y\mapsto \frac{1}{X},\\
\tau &: X\mapsto Y, \ Y\mapsto X.
\end{align*}
Hence the problem can be reduced to the purely quasi-monomial case.
By Theorem \ref{thHKY14},
$K(x,y)^G$ is $k$-rational for each of the proper normal subgroups
$H=\{1\}$, $\langle\rho^3\rangle=\langle -I \rangle$, $\langle \rho^2 \rangle$, $\langle \rho \rangle$, 
$\langle \rho^2, \tau \rangle, \langle \rho^2, -\tau \rangle$ of $G$.\\

{\it Proof of Corollary \ref{cor1.8}.} 
Because we have $k_\alpha(M)^G=k_\alpha(M_1)(M_2)^G$, 
it follows from Theorem \ref{thmain} that $k_\alpha(M)^G$ 
is rational over $k_\alpha(M_1)^G$. 
{\rm (1)} If ${\rm rank}_\bZ\, M_1=1$ or $2$, 
then $k_\alpha(M_1)^G$ is $k$-rational (see Theorem \ref{thHaj8387}). 
Hence $k_\alpha(M)^G$ is $k$-rational. 
{\rm (2)} If ${\rm rank}_\bZ\, M_1=3$, then 
the $k$-rationality of $k_\alpha(M_1)^G$ follows from \cite{HKiY} 
except for the case $G=D_6$ and the action of $G$ on $k_\alpha(M_1)$ 
is given as $G_{3,1,1}=\langle\tau_1,\lambda_1\rangle\simeq V_4$. 
Hence we also find that $k_\alpha(M)^G$ is $k$-rational.\qed

\section*{Declarations}
Conflict of Interest. The authors declare no competing interests. 

%

\end{document}